\UseRawInputEncoding
\documentclass[12pt]{amsart}
\usepackage{txfonts}      
\usepackage{amssymb}
\usepackage{eucal}
\usepackage{amsmath}
\usepackage{amscd}
\usepackage{xcolor}
\usepackage{multicol}
\usepackage[all]{xy}           
\usepackage{graphicx}
\usepackage{color}
\usepackage{colordvi}
\usepackage{xspace}
\usepackage{tikz}
\usepackage{makecell}
\usepackage{appendix}
\usepackage{amsthm}
\usepackage[misc]{ifsym}

\usepackage{ifpdf}
\ifpdf
\usepackage[colorlinks,final,backref=page,hyperindex]{hyperref}
\else
\usepackage[colorlinks,final,backref=page,hyperindex,hypertex]{hyperref}
\fi

\usepackage[active]{srcltx} 




\topmargin -.8cm \textheight 21.6cm \oddsidemargin 0cm
\evensidemargin -0cm \textwidth 16cm
\begin{document}
\newtheorem{theorem}{Theorem}[section]
\newtheorem{lemma}[theorem]{Lemma}
\newtheorem{corollary}[theorem]{Corollary}
\newtheorem{proposition}[theorem]{Proposition}
\theoremstyle{definition}
\newtheorem{definition}[theorem]{Definition}
\newtheorem{example}[theorem]{Example}
\newtheorem{remark}[theorem]{Remark}
\newtheorem{pdef}[theorem]{Proposition-Definition}
\newtheorem{condition}[theorem]{Condition}
\renewcommand{\labelenumi}{{\rm(\alph{enumi})}}
\renewcommand{\theenumi}{\alph{enumi}}
\baselineskip=14pt

\newcommand {\emptycomment}[1]{} 

\newcommand{\nc}{\newcommand}
\newcommand{\delete}[1]{}

\nc{\todo}[1]{\tred{To do:} #1}

\nc{\tred}[1]{\textcolor{red}{#1}}
\nc{\tblue}[1]{\textcolor{blue}{#1}}
\nc{\tgreen}[1]{\textcolor{green}{#1}}
\nc{\tpurple}[1]{\textcolor{purple}{#1}}
\nc{\tgray}[1]{\textcolor{gray}{#1}}
\nc{\torg}[1]{\textcolor{orange}{#1}}
\nc{\tmag}[1]{\textcolor{magenta}}
\nc{\btred}[1]{\textcolor{red}{\bf #1}}
\nc{\btblue}[1]{\textcolor{blue}{\bf #1}}
\nc{\btgreen}[1]{\textcolor{green}{\bf #1}}
\nc{\btpurple}[1]{\textcolor{purple}{\bf #1}}

	\nc{\mlabel}[1]{\label{#1}}  
	\nc{\mcite}[1]{\cite{#1}}  
	\nc{\mref}[1]{\ref{#1}}  
	\nc{\meqref}[1]{\eqref{#1}}  
	\nc{\mbibitem}[1]{\bibitem{#1}} 

\delete{
	\nc{\mlabel}[1]{\label{#1}  
		{ {\small\tgreen{\tt{{\ }(#1)}}}}}
	\nc{\mcite}[1]{\cite{#1}{\small{\tt{{\ }(#1)}}}}  
	\nc{\mref}[1]{\ref{#1}{\small{\tred{\tt{{\ }(#1)}}}}}  
	\nc{\meqref}[1]{\eqref{#1}{{\tt{{\ }(#1)}}}}  
	\nc{\mbibitem}[1]{\bibitem[\bf #1]{#1}} 
}

\nc{\cm}[1]{\textcolor{red}{Chengming:#1}}
\nc{\yy}[1]{\textcolor{blue}{Yanyong: #1}}
\nc{\zy}[1]{\textcolor{yellow}{Zhongyin: #1}}
\nc{\li}[1]{\textcolor{purple}{#1}}
\nc{\lir}[1]{\textcolor{purple}{Li:#1}}


\nc{\tforall}{\ \ \text{for all }}
\nc{\hatot}{\,\widehat{\otimes} \,}
\nc{\complete}{completed\xspace}
\nc{\wdhat}[1]{\widehat{#1}}

\nc{\ts}{\mathfrak{p}}
\nc{\mts}{c_{(i)}\ot d_{(j)}}

\nc{\NA}{{\bf NA}}
\nc{\LA}{{\bf Lie}}
\nc{\CLA}{{\bf CLA}}

\nc{\cybe}{CYBE\xspace}
\nc{\nybe}{NYBE\xspace}
\nc{\ccybe}{CCYBE\xspace}

\nc{\ndend}{pre-Novikov\xspace}
\nc{\calb}{\mathcal{B}}
\nc{\rk}{\mathrm{r}}
\newcommand{\g}{\mathfrak g}
\newcommand{\h}{\mathfrak h}
\newcommand{\pf}{\noindent{$Proof$.}\ }
\newcommand{\frkg}{\mathfrak g}
\newcommand{\frkh}{\mathfrak h}
\newcommand{\Id}{\rm{Id}}
\newcommand{\gl}{\mathfrak {gl}}
\newcommand{\ad}{\mathrm{ad}}
\newcommand{\add}{\frka\frkd}
\newcommand{\frka}{\mathfrak a}
\newcommand{\frkb}{\mathfrak b}
\newcommand{\frkc}{\mathfrak c}
\newcommand{\frkd}{\mathfrak d}
\newcommand {\comment}[1]{{\marginpar{*}\scriptsize\textbf{Comments:} #1}}


\nc{\disp}[1]{\displaystyle{#1}}
\nc{\bin}[2]{ (_{\stackrel{\scs{#1}}{\scs{#2}}})}  
\nc{\binc}[2]{ \left (\!\! \begin{array}{c} \scs{#1}\\
    \scs{#2} \end{array}\!\! \right )}  
\nc{\bincc}[2]{  \left ( {\scs{#1} \atop
    \vspace{-.5cm}\scs{#2}} \right )}  
\nc{\ot}{\otimes}
\nc{\sot}{{\scriptstyle{\ot}}}
\nc{\otm}{\overline{\ot}}
\nc{\ola}[1]{\stackrel{#1}{\la}}

\nc{\scs}[1]{\scriptstyle{#1}} \nc{\mrm}[1]{{\rm #1}}

\nc{\dirlim}{\displaystyle{\lim_{\longrightarrow}}\,}
\nc{\invlim}{\displaystyle{\lim_{\longleftarrow}}\,}

\nc{\bfk}{{\bf k}} \nc{\bfone}{{\bf 1}}
\nc{\rpr}{\circ}
\nc{\dpr}{{\tiny\diamond}}
\nc{\rprpm}{{\rpr}}

\nc{\mmbox}[1]{\mbox{\ #1\ }} \nc{\ann}{\mrm{ann}}
\nc{\Aut}{\mrm{Aut}} \nc{\can}{\mrm{can}}
\nc{\twoalg}{{two-sided algebra}\xspace}
\nc{\colim}{\mrm{colim}}
\nc{\Cont}{\mrm{Cont}} \nc{\rchar}{\mrm{char}}
\nc{\cok}{\mrm{coker}} \nc{\dtf}{{R-{\rm tf}}} \nc{\dtor}{{R-{\rm
tor}}}
\renewcommand{\det}{\mrm{det}}
\nc{\depth}{{\mrm d}}
\nc{\End}{\mrm{End}} \nc{\Ext}{\mrm{Ext}}
\nc{\Fil}{\mrm{Fil}} \nc{\Frob}{\mrm{Frob}} \nc{\Gal}{\mrm{Gal}}
\nc{\GL}{\mrm{GL}} \nc{\Hom}{\mrm{Hom}} \nc{\hsr}{\mrm{H}}
\nc{\hpol}{\mrm{HP}}  \nc{\id}{\mrm{id}} \nc{\im}{\mrm{im}}

\nc{\incl}{\mrm{incl}} \nc{\length}{\mrm{length}}
\nc{\LR}{\mrm{LR}} \nc{\mchar}{\rm char} \nc{\NC}{\mrm{NC}}
\nc{\mpart}{\mrm{part}} \nc{\pl}{\mrm{PL}}
\nc{\ql}{{\QQ_\ell}} \nc{\qp}{{\QQ_p}}
\nc{\rank}{\mrm{rank}} \nc{\rba}{\rm{RBA }} \nc{\rbas}{\rm{RBAs }}
\nc{\rbpl}{\mrm{RBPL}}
\nc{\rbw}{\rm{RBW }} \nc{\rbws}{\rm{RBWs }} \nc{\rcot}{\mrm{cot}}
\nc{\rest}{\rm{controlled}\xspace}
\nc{\rdef}{\mrm{def}} \nc{\rdiv}{{\rm div}} \nc{\rtf}{{\rm tf}}
\nc{\rtor}{{\rm tor}} \nc{\res}{\mrm{res}} \nc{\SL}{\mrm{SL}}
\nc{\Spec}{\mrm{Spec}} \nc{\tor}{\mrm{tor}} \nc{\Tr}{\mrm{Tr}}
\nc{\mtr}{\mrm{sk}}

\nc{\ab}{\mathbf{Ab}} \nc{\Alg}{\mathbf{Alg}}

\nc{\BA}{{\mathbb A}} \nc{\CC}{{\mathbb C}} \nc{\DD}{{\mathbb D}}
\nc{\EE}{{\mathbb E}} \nc{\FF}{{\mathbb F}} \nc{\GG}{{\mathbb G}}
\nc{\HH}{{\mathbb H}} \nc{\LL}{{\mathbb L}} \nc{\NN}{{\mathbb N}}
\nc{\QQ}{{\mathbb Q}} \nc{\RR}{{\mathbb R}} \nc{\BS}{{\mathbb{S}}} \nc{\TT}{{\mathbb T}}
\nc{\VV}{{\mathbb V}} \nc{\ZZ}{{\mathbb Z}}


\nc{\calao}{{\mathcal A}} \nc{\cala}{{\mathcal A}}
\nc{\calc}{{\mathcal C}} \nc{\cald}{{\mathcal D}}
\nc{\cale}{{\mathcal E}} \nc{\calf}{{\mathcal F}}
\nc{\calfr}{{{\mathcal F}^{\,r}}} \nc{\calfo}{{\mathcal F}^0}
\nc{\calfro}{{\mathcal F}^{\,r,0}} \nc{\oF}{\overline{F}}
\nc{\calg}{{\mathcal G}} \nc{\calh}{{\mathcal H}}
\nc{\cali}{{\mathcal I}} \nc{\calj}{{\mathcal J}}
\nc{\call}{{\mathcal L}} \nc{\calm}{{\mathcal M}}
\nc{\caln}{{\mathcal N}} \nc{\calo}{{\mathcal O}}
\nc{\calp}{{\mathcal P}} \nc{\calq}{{\mathcal Q}} \nc{\calr}{{\mathcal R}}
\nc{\calt}{{\mathcal T}} \nc{\caltr}{{\mathcal T}^{\,r}}
\nc{\calu}{{\mathcal U}} \nc{\calv}{{\mathcal V}}
\nc{\calw}{{\mathcal W}} \nc{\calx}{{\mathcal X}}
\nc{\CA}{\mathcal{A}}

\nc{\fraka}{{\mathfrak a}} \nc{\frakB}{{\mathfrak B}}
\nc{\frakb}{{\mathfrak b}} \nc{\frakd}{{\mathfrak d}}
\nc{\oD}{\overline{D}}
\nc{\frakF}{{\mathfrak F}} \nc{\frakg}{{\mathfrak g}}
\nc{\frakm}{{\mathfrak m}} \nc{\frakM}{{\mathfrak M}}
\nc{\frakMo}{{\mathfrak M}^0} \nc{\frakp}{{\mathfrak p}}
\nc{\frakS}{{\mathfrak S}} \nc{\frakSo}{{\mathfrak S}^0}
\nc{\fraks}{{\mathfrak s}} \nc{\os}{\overline{\fraks}}
\nc{\frakT}{{\mathfrak T}}
\nc{\oT}{\overline{T}}
\nc{\frakX}{{\mathfrak X}} \nc{\frakXo}{{\mathfrak X}^0}
\nc{\frakx}{{\mathbf x}}
\nc{\frakTx}{\frakT}      
\nc{\frakTa}{\frakT^a}        
\nc{\frakTxo}{\frakTx^0}   
\nc{\caltao}{\calt^{a,0}}   
\nc{\ox}{\overline{\frakx}} \nc{\fraky}{{\mathfrak y}}
\nc{\frakz}{{\mathfrak z}} \nc{\oX}{\overline{X}}

\font\cyr=wncyr10


\title{Algebraic constructions for left-symmetric conformal algebras}

\author{Zhongyin Xu}
\address{School of Mathematics, Hangzhou Normal University,
Hangzhou, 311121, China}
\email{Xzy@stu.hznu.edu.cn}

\author{Yanyong Hong}
\address{School of Mathematics, Hangzhou Normal University,
Hangzhou, 311121, China}
\email{yyhong@hznu.edu.cn}

\subjclass[2010]{17A30, 17A60, 17B60, 17B69, 17D25}
\keywords{left-symmetric conformal algebra, extending structures, unified product, bicrossed product, crossed product}
\footnote {
Corresponding author: Y. Hong (yyhong@hznu.edu.cn).
}
\begin{abstract}
Let $R$ be a left-symmetric conformal algebra and $Q$ be a $\mathbb{C}[\partial]$-module.
We introduce the notion of a unified product for left-symmetric conformal algebras and apply it to construct an object $\mathcal{H}^2_R(Q,R)$ to describe and classify all left-symmetric conformal algebra structures on the direct sum $E=R\oplus Q$ as a $\mathbb{C}[\partial]$-module such that $R$ is a subalgebra of $E$ up to isomorphism whose restriction on $R$ is the identity map. Moreover, we study $\mathcal{H}^2_R(Q,R)$ in detail when $Q$, $R$ are free as $\mathbb{C}[\partial]$-modules and $\text{rank}Q=1$. Some special products such as crossed product and bicrossed product are also investigated.

\end{abstract}

\maketitle
\section{Introduction}
The notion of a Lie conformal algebra introduced by V. Kac in \cite{K1,K2} is an axiomatic description of singular part of the operator product expansion of chiral fields in two-dimensional conformal field theory. It is a useful tool for studying vertex algebras \cite{K1} and infinite-dimensional Lie algebras satisfying the locality property \cite{K}. The structure theory \cite{DK}, representation theory \cite{CK, CK1} and cohomology theory \cite{BK} of finite Lie conformal algebras have been well developed. On the other hand, the notion of a left-symmetric conformal algebra was introduced in \cite{HL} to investigate whether there exist compatible left-symmetric algebra structures on formal distribution Lie algebras. Notice that the notion of a left-symmetric pseudoalgebra was introduced in \cite{W}. The conformal commutator of a left-symmetric conformal algebra is a Lie conformal algebra and finite left-symmetric conformal algebras which are free $\mathbb{C}[\partial]$-modules can naturally  provide the solutions of conformal Yang-Baxter equation and conformal $S$-equation  \cite{HB}. Moreover, the theory of left-symmetric conformal bialgebras was established in \cite{HL2}, compatible left-symmetric conformal algebra structures on the Lie conformal algebra $W(a,b)$ were investigated in  \cite{LHZZ, WH},  central extensions and simplicities of a class of left-symmetric conformal algebras were studied in \cite{XH}, and the general cohomology theory was presented in \cite{ZH}.

In this paper, we intend to study the following structure problem of left-symmetric conformal algebras:\\
\textbf{The $\mathbb{C}[\partial]$-split extending structures problem:}
{\it Given a left-symmetric conformal algebra $R$ and a $\mathbb{C}[\partial]$-module $Q$ . Set $E=R\oplus Q$ where the direct sum is the
sum of $\mathbb{C}[\partial]$-modules. Describe and classify all left-symmetric conformal algebra structures on $E$ such that $R$ is a subalgebra of $E$ up to isomorphism whose restriction on $R$ is the identity map.}\\
From the point of view of left-symmetric conformal algebras, this problem is natural and important, i.e. how to obtain a larger left-symmetric conformal algebra from a given one. Similar problems for groups, associative algebras, Hopf algebras, Lie algebras, Leibniz algebras, left-symmetric algebras, Lie-$2$ algebras, Lie conformal algebras and so on have been studied in \cite{A1, A3, A4, A2, A5, H2, TW, HS} respectively. It should be pointed out that this problem is hard when $R=\{0\}$, since it is equal to classifying all left-symmetric conformal algebras of a given rank. Notice that it is difficult to present a complete classification of torsion-free left-symmetric conformal algebras whose rank is $2$, according to the results given in \cite{LHZZ, WH}. Therefore, we always assume $R\neq0$ in this paper.

This problem contains many important problems  in the structure theory of left-symmetric conformal algebras. \delete{For instance, when $Q$ is
a left-symmetric conformal algebra, the problem that how to describe and classify all left-symmetric conformal algebra structures on $E$ such that $R$ and $Q$ are two subalgebras of $E$ up to isomorphism is a special case of the $\mathbb{C}[\partial]$-split extending structures
problem.} For example, it includes the following problem:\\
\textbf{The $\mathbb{C}[\partial]$-split extension problem:}
{\it Given two left-symmetric conformal algebras $R$ and $Q$. Describe and classify all $\mathbb{C}[\partial]$-split exact sequences of left-symmetric conformal algebras as follows up to equivalence:}
\begin{equation}
0\rightarrow R\stackrel{i}{\longrightarrow} E\stackrel{\pi}{\longrightarrow} Q\rightarrow 0.
\end{equation}
Notice that the $\mathbb{C}[\partial]$-split sequence means that $E\cong R\oplus Q$ as a $\mathbb{C}[\partial]$-module. If the $\lambda$-products on $R$ are trivial, this problem is equal to the $\mathbb{C}[\partial]$-split central extension problem of a left-symmetric conformal algebra $Q$. It is known from \cite{ZH} that all such $\mathbb{C}[\partial]$-split central extensions up to equivalence can be characterized by  the second cohomology group $H^2(Q,R)$.   Therefore the study of the $\mathbb{C}[\partial]$-split extending structures problem is meaningful and is useful for investigating the structure theory of left-symmetric conformal algebras. In this paper, we introduce
the notion of a unified product for left-symmetric conformal algebras and apply it to construct an object $\mathcal{H}^2_R(Q,R)$ to give a theoretical answer for the $\mathbb{C}[\partial]$-split extending structures problem. Moreover, we study $\mathcal{H}^2_R(Q,R)$ in detail when $R$ is free as a $\mathbb{C}[\partial]$-module and $Q$ is free of rank one as a $\mathbb{C}[\partial]$-module. Some special products such as crossed product and bicrossed product are also investigated. It should be pointed out that any $E$ in the $\mathbb{C}[\partial]$-split extension problem is isomorphic to a crossed product of $R$ and $Q$. We also construct an object $\mathcal{HC}^2(Q,R)$ to characterize all $E$ in the $\mathbb{C}[\partial]$-split extension problem.

This paper is organized as follows. In Section 2,  some related definitions and results of left-symmetric conformal algebras are recalled.
In Section 3, we introduce the notion of a unified product for left-symmetric conformal algebras and construct an object $\mathcal{H}^2_R(Q,R)$ to give a theoretical answer for the $\mathbb{C}[\partial]$-split extending structures problem.
In Section 4, we study the unified products when $R$ is a free $\mathbb{C}[\partial]$-module and $Q$ is a free $\mathbb{C}[\partial]$-module of rank 1 in detail.
In Section 5, some special cases of unified products such as crossed products and bicrossed products are
introduced. Some examples are presented in details.

Throughout this paper, we denote by $\mathbb{C}$ the set of complex numbers.
All vector spaces and tensor products are taken over the complex field $\mathbb{C}$. For any vector space $V$, we use $V[\lambda]$ to denote the set of polynomials of $\lambda$ with coefficients in $V$.
\section{Preliminaries}
In this section, we recall some basic definitions and facts about left-symmetric conformal algebras. These facts can be referred to \cite{HL, K1}.
\begin{definition}
A {\bf left-symmetric conformal algebra} $R$ is a $\mathbb{C}\left[\partial\right]$-module with a $\lambda$-product $\cdot_\lambda\cdot$
which is a $\mathbb{C}$-bilinear map from $R\times R \rightarrow R\left[\lambda\right]$, satisfying
\begin{align}
&\partial a_\lambda b=-\lambda a_\lambda b,~~~~~~~~a_\lambda\partial b=(\partial+\lambda)a_\lambda b,\tag{conformal sesquilinearity}\label{c1}\\
&(a_\lambda b)_{\lambda+\mu}c-a_\lambda(b_\mu c)=(b_\mu a)_{\lambda+\mu}c-b_\mu(a_\lambda c),\tag{left-symmetry}\label{c2}
\end{align}
for $a,b,c\in R$. We denote it by $(R, \cdot_\lambda \cdot)$ or $R$.

A {\bf Lie conformal algebra} $R$ is a
$\mathbb{C}\left[\partial\right]$-module with a $\lambda$-bracket $[\cdot_\lambda\cdot]$
which is a $\mathbb{C}$-bilinear map from $R\times R \rightarrow R\left[\lambda\right]$, satisfying
\begin{align}
&[\partial a_\lambda b]=-\lambda [a_\lambda b],~~~~~~~~[a_\lambda\partial b]=(\partial+\lambda)[a_\lambda b],\tag{conformal sesquilinearity}\label{Lie1}\\
&[a_\lambda[b_\mu c]]=[[a_\lambda b]_{\lambda+\mu}c]-[b_\mu[a_\lambda c]],\;\;\;a, b, c\in R.\tag{Jacobi identity}\label{Lie2}
\end{align}

\delete{A {\bf Novikov conformal algebra} $R$ is a left-symmetric conformal algebra satisfying
\begin{equation}
(a_\lambda b)_{\lambda+\mu}c=(a_\lambda c)_{-\mu-\partial}b,\label{c3}
\end{equation}
for all $a,b,c \in R$.}
\end{definition}

\delete{A left-symmetric conformal algebra or Lie conformal algebra is said to be {\bf finite}, if it is finitely generated as a $\mathbb{C}\left[\partial\right]$-module. Otherwise, we call
it {\bf infinite}.}

\begin{example}\label{curl}
Let $(L,\circ) $ be a left-symmetric algebra. Then there is a natural left-symmetric conformal algebra structure on $\text{Cur} L=\mathbb{C}[\partial]\otimes L$  with
the $\lambda$-products
$$a_\lambda b=a\circ b, ~~~\text{$a$, $b\in L$.}$$

\end{example}
\begin{proposition} \cite[Theorem 3.2]{HL}\label{proplsca}
Let $R = \mathbb{C}[\partial]x$ be a left-symmetric conformal algebra which is free
      and of rank 1 as a $\mathbb{C}[\partial]$-module. Then $R$ is isomorphic to the left-symmetric conformal algebra with $\lambda$-product which is one of
three cases as follows:\\
(i) $x_\lambda x = 0$;\\
(ii) $x_\lambda x = x$;\\
(iii) $x_\lambda x = (\partial + \lambda + c)x$, for any $c \in \mathbb{C}$.
\end{proposition}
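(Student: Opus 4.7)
The plan is to fix the generator $x$ and write $x_\lambda x = f(\partial,\lambda)\, x$ for some polynomial $f \in \mathbb{C}[\partial,\lambda]$, then unpack the left-symmetry axiom applied to the single triple $(x,x,x)$. Using conformal sesquilinearity (namely $\partial a_\mu b = -\mu(a_\mu b)$ and $a_\mu \partial b = (\partial+\mu)(a_\mu b)$), both sides can be read off as polynomial operators acting on $x$, and the identity $(x_\lambda x)_{\lambda+\mu} x - x_\lambda(x_\mu x) = (x_\mu x)_{\lambda+\mu} x - x_\mu(x_\lambda x)$ reduces to the single polynomial identity
$$\bigl[f(-(\lambda+\mu),\lambda) - f(-(\lambda+\mu),\mu)\bigr] f(\partial,\lambda+\mu) = f(\partial+\lambda,\mu) f(\partial,\lambda) - f(\partial+\mu,\lambda) f(\partial,\mu).$$

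Next I would bound the $\partial$-degree of $f$. If $\deg_\partial f = n$ with leading coefficient $a_n(\lambda)$, the left-hand side has $\partial$-degree $n$ while the right-hand side could a priori reach $2n$. The coefficient of $\partial^{2n}$ cancels automatically, but the coefficient of $\partial^{2n-1}$ contains the nonzero term $n(\lambda-\mu) a_n(\lambda) a_n(\mu)$, forcing $n \le 1$. So $f(\partial,\lambda) = a(\lambda)\partial + c(\lambda)$.

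Specializing $\mu = 0$ in the identity and cancelling the common factor $a(\lambda)$ yields $c(\lambda) = c(0) + \lambda\, a(\lambda)$. Substituting this back and then differentiating in $\mu$ at $\mu = 0$ produces the ODE
$$a(0)\,\lambda\, a'(\lambda) = a(\lambda)\bigl(a(\lambda) - a(0)\bigr),$$
whose two sides have $\lambda$-degrees $\deg a$ and $2\deg a$ respectively. A leading-coefficient comparison forces $\deg a \le 0$, i.e.\ $a(\lambda) = a_0$ is a constant. Hence $f(\partial,\lambda) = a_0(\partial+\lambda) + c_0$ for some $a_0,c_0 \in \mathbb{C}$.

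Finally, an isomorphism $x \mapsto \alpha x$ replaces $f$ by $\alpha f$, so three orbits emerge: $a_0 = c_0 = 0$ gives case (i); $a_0 = 0$ with $c_0 \neq 0$ (rescale by $\alpha = 1/c_0$) gives case (ii); and $a_0 \neq 0$ (rescale by $\alpha = 1/a_0$, set $c = c_0/a_0$) gives case (iii). The main obstacle is the step that forces $a(\lambda)$ to be a constant: the raw two-variable identity is unwieldy, and the cleanest route I see is the $\mu=0$ specialization followed by a $\mu$-derivative to produce the above ODE, after which a degree comparison finishes it.
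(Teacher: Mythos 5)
The paper does not actually prove this proposition---it is imported directly from \cite[Theorem 3.2]{HL}---so there is no in-paper argument to compare against. Judged on its own terms, your proof is essentially correct and is the natural direct computation: the reduction of left-symmetry on the single triple $(x,x,x)$ to the stated two-variable polynomial identity is right; the coefficient of $\partial^{2n-1}$ on the right-hand side is indeed exactly $n(\lambda-\mu)a_n(\lambda)a_n(\mu)$ (the cross terms involving $a_{n-1}$ cancel), which kills every $n\ge 2$; and substituting $c(\lambda)=c(0)+\lambda a(\lambda)$ and differentiating in $\mu$ at $0$ really does produce $a(0)\lambda a'(\lambda)=a(\lambda)\bigl(a(\lambda)-a(0)\bigr)$. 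This degree-comparison style is also exactly the technique the paper uses in the one proof of this flavour it does write out, namely that the algebra $L_\lambda L=(\lambda+\partial+c)L$ admits no nonzero conformal derivations.

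Two points need tightening. First, ``cancelling the common factor $a(\lambda)$'' silently assumes $a\not\equiv 0$, so as written the derivation of $c(\lambda)=c(0)+\lambda a(\lambda)$ (and the subsequent ODE) does not cover the case $a\equiv 0$, which is precisely where cases (i) and (ii) live. The clean repair is to specialize $\mu=0$ in the full identity and cancel the factor $f(\partial,\lambda)$ instead (legitimate unless $f=0$, which is case (i)): this gives $f(-\lambda,\lambda)-f(-\lambda,0)=f(\partial+\lambda,0)-f(\partial,0)=a(0)\lambda$, hence $c(\lambda)=c(0)+\lambda a(\lambda)$ uniformly. Relatedly, the left side of your ODE has $\lambda$-degree $\deg a$ only when $a(0)\neq 0$; when $a(0)=0$ it vanishes and the ODE forces $a\equiv 0$, which is still a constant, so the conclusion survives but deserves a sentence. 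Second, since you only used specializations and one derivative of the identity, you should close the loop by checking that every $f=a_0(\partial+\lambda)+c_0$ satisfies the full identity (a one-line verification), so that the normal-form list contains no spurious entries. With these repairs the argument is complete, and the orbit analysis under $x\mapsto\alpha x$---the only $\mathbb{C}[\partial]$-module automorphisms of a free rank-one module---correctly yields the three cases.
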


\begin{proposition}\cite[Proposition 2.5]{HL}
Let $(R, \cdot_\lambda \cdot)$ be a left-symmetric conformal algebra. Then we can define a Lie conformal algebra structure on $R$  with the following $\lambda$-brackets
\begin{equation}
[a_\lambda b]=a_\lambda b-b_{-\lambda-\partial}a,\;\;\;\text{$a$, $b\in R$.}
\end{equation}
Denote this Lie conformal algebra by $\mathfrak{g}(R)$, which is called the {\bf sub-adjacent} Lie conformal algebra of $R$ and $R$
is a {\bf compatible} left-symmetric conformal algebra structure on the Lie conformal algebra $\mathfrak{g}(R)$.
\end{proposition}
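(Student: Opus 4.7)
The plan is to verify the two axioms that define a Lie conformal algebra in the paper, namely conformal sesquilinearity and the Jacobi identity, directly from the defining identities of the left-symmetric conformal algebra $R$. I would also note (even though it is not in the paper's listed axioms) the usual skew-symmetry $[a_\lambda b]=-[b_{-\lambda-\partial}a]$, since this is basically built into the formula: substituting and simplifying gives $-[b_{-\lambda-\partial}a]=-b_{-\lambda-\partial}a+a_{-(-\lambda-\partial)-\partial}b=-b_{-\lambda-\partial}a+a_\lambda b=[a_\lambda b]$.

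First I would check conformal sesquilinearity of $[\cdot_\lambda\cdot]$. Using the sesquilinearity of $\cdot_\lambda\cdot$ in the form $(\partial a)_\lambda b=-\lambda(a_\lambda b)$ and $b_\mu(\partial a)=(\partial+\mu)(b_\mu a)$, one computes
\begin{equation}
[(\partial a)_\lambda b]=(\partial a)_\lambda b-b_{-\lambda-\partial}(\partial a)=-\lambda(a_\lambda b)-(\partial+(-\lambda-\partial))(b_{-\lambda-\partial}a)=-\lambda[a_\lambda b],
\end{equation}
and similarly $[a_\lambda\partial b]=(\partial+\lambda)[a_\lambda b]$. The only subtlety is keeping track of how the formal symbol $-\lambda-\partial$ interacts with the sesquilinearity rule when $\partial$ migrates past the $\lambda$-product; this is purely bookkeeping.

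The main work is the Jacobi identity. I would expand
\begin{equation}
[a_\lambda[b_\mu c]]=a_\lambda(b_\mu c)-a_\lambda(c_{-\mu-\partial}b)-(b_\mu c)_{-\lambda-\partial}a+(c_{-\mu-\partial}b)_{-\lambda-\partial}a,
\end{equation}
and similarly expand $[[a_\lambda b]_{\lambda+\mu}c]$ and $[b_\mu[a_\lambda c]]$, producing twelve terms in total on the right-hand side of $[[a_\lambda b]_{\lambda+\mu}c]-[a_\lambda[b_\mu c]]+[b_\mu[a_\lambda c]]$. The key identity to use is the left-symmetry axiom
\begin{equation}
(a_\lambda b)_{\lambda+\mu}c-a_\lambda(b_\mu c)=(b_\mu a)_{\lambda+\mu}c-b_\mu(a_\lambda c),
\end{equation}
which says the $\lambda$-associator is symmetric in its first two arguments. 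I would apply this identity three times, once to each of the pairs of terms in which two of the three letters $a,b,c$ appear in the ``left'' positions of the nested product, and once in appropriately substituted form (for example, replacing $\lambda$ by $-\lambda-\partial$ to handle the terms where $a$ has migrated to the right).

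The expected main obstacle is purely notational: handling the several substitutions of the form $\lambda\mapsto-\lambda-\partial$ or $\mu\mapsto-\mu-\partial$ inside the left-symmetry identity, because each such substitution forces one to use the conformal sesquilinearity to move $\partial$ into the correct position. Once one has carefully normalized each of the twelve terms using sesquilinearity (so that all indices read as polynomials in $\lambda,\mu$ and the external $\partial$), the three applications of left-symmetry cause the twelve terms to cancel in six pairs, yielding the Jacobi identity. The final statement that $R$ is then a compatible left-symmetric conformal structure on $\mathfrak g(R)$ is a tautology: by construction $[a_\lambda b]=a_\lambda b-b_{-\lambda-\partial}a$, which is precisely the compatibility condition defining the word ``compatible''.
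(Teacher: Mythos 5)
Your verification is correct, and since the paper states this proposition without proof (it is quoted from an external reference), a direct check of the axioms is the only route available; your computations for sesquilinearity and skew-symmetry are right, and the Jacobi identity does follow from exactly three applications of left-symmetry as you describe. Two points are worth making explicit. First, the expression you propose to show vanishes, $[[a_\lambda b]_{\lambda+\mu}c]-[a_\lambda[b_\mu c]]+[b_\mu[a_\lambda c]]$, is the standard Jacobi identity $[a_\lambda[b_\mu c]]=[[a_\lambda b]_{\lambda+\mu}c]+[b_\mu[a_\lambda c]]$, whereas the paper's definition of a Lie conformal algebra prints the last term with a minus sign; the commutator bracket satisfies the standard form (the one you use), not the printed one, so the printed axiom is evidently a sign typo that your argument silently corrects --- you should say so rather than leave the discrepancy implicit. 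Second, a small combinatorial correction: after all subscripts are normalized by sesquilinearity, the twelve terms do not cancel in six pairs; they split into three groups of four, each group being one full instance of the left-symmetry identity evaluated at substituted parameters, namely at $(\lambda,\mu)$ for the ordered triple $(a,b,c)$, at $(\mu,-\lambda-\mu-\partial)$ for $(b,c,a)$, and at $(\lambda,-\lambda-\mu-\partial)$ for $(a,c,b)$. This is consistent with your count of three applications of left-symmetry, and the bookkeeping you flag (e.g.\ that $(b_{-\lambda-\partial}a)_{\lambda+\mu}c=(b_\mu a)_{\lambda+\mu}c$ after sesquilinearity) is indeed the only delicate part. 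Your final remark that compatibility of $R$ with $\mathfrak{g}(R)$ is true by construction is also correct.
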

In what follows, we recall the definition of a bimodule over a left-symmetric conformal algebra.
\begin{definition}
Let $R$ be a left-symmetric conformal algebra and $V$ be a $\mathbb{C}[\partial]$-module. $V$ is called a {\bf bimodule of $R$} (or an {\bf $R$-bimodule})
if there are two $\mathbb{C}$-linear maps $R\otimes V\rightarrow V[\lambda]$, $a\otimes v\rightarrow a_\lambda v$ and
$V\otimes R\rightarrow V[\lambda]$, $v\otimes a\rightarrow v_\lambda a$ such that
\begin{align}
&(\partial a)_\lambda v=-\lambda a_\lambda v,~~~~(\partial v)_\lambda a=-\lambda v_\lambda a,\label{M1}\\
&a_\lambda(\partial v)=(\lambda+\partial)a_\lambda v,~~v_\lambda(\partial a)=(\lambda+\partial)v_\lambda a,\label{M2}\\
&(a_\lambda b)_{\lambda+\mu}v-a_\lambda(b_\mu v)=(b_\mu a)_{\lambda+\mu}v-b_\mu(a_\lambda v),\\
&(a_\lambda v)_{\lambda+\mu}b-a_\lambda(v_\mu b)=(v_\mu a)_{\lambda+\mu}b-v_\mu(a_\lambda b),
\end{align}
hold for all $a,b\in R$ and $v\in V$.
\end{definition}

\begin{definition}\label{def2.6}
Let $U$ and $V$ be two $\mathbb{C}[\partial]$-modules.
A {\bf left conformal linear map} from $U$ to $V$ is a $\mathbb{C}$-linear map $\varphi$: $U\rightarrow V[\lambda]$, denoted by $\varphi_\lambda$ such that $\varphi_\lambda(\partial a)=-\lambda \varphi_\lambda a$ for all $a\in U$.
A {\bf right conformal linear map} from $U$ to $V$ is a $\mathbb{C}$-linear map $\psi$: $U\rightarrow V[\lambda]$, denoted by $\psi_\lambda$ such that $\psi_\lambda(\partial a)=(\partial+\lambda)\psi_\lambda a$ for all $a\in U$.
A right conformal linear map is often shortly called as {\bf conformal linear map}.

In addition, let $W$ also be a
$\mathbb{C}[\partial]$-module. A {\bf conformal bilinear map} from $U\times V\rightarrow W$ is a $\mathbb{C}$-bilinear map $f:U\times V\rightarrow W[\lambda] $, denoted by $f_\lambda(\cdot,\cdot)$ such that
$f_\lambda(\partial a,b)=-\lambda f_\lambda (a,b)$ and $f_\lambda( a,\partial b)=(\lambda+\partial) f_\lambda (a,b)$ for all $a\in U$ and $b\in V$.
\end{definition}

Denote the $\mathbb{C}$-vector space of all conformal linear maps from $V$ to $V$ by $Cend(V)$. It has a canonical $\mathbb{C}[\partial] $-module structure given as
\begin{eqnarray}
(\partial \varphi)_\lambda=-\lambda\varphi_\lambda,\;\;\;\; \varphi\in Cend(V).
\end{eqnarray}
\begin{remark}
Let $R$ be a left-symmetric conformal algebra and $V$ be a $\mathbb{C}[\partial]$-module. Let
$l$ and $r$ be two $\mathbb{C}[\partial] $-module homomorphisms: $R\rightarrow Cend(V)$. Define $a_\lambda v={l(a)}_\lambda v$ and $v_\lambda a={r(a)}_{-\lambda-\partial}v$ for all $a\in R$ and $v\in V$. Then it is easy to see that $V$ is an $R$-bimodule if and only if the following conditions hold:
\begin{align}
&l(a_\lambda b)_{\lambda+\mu}v-l(a)_\lambda(l(b)_\mu v)=l(b_\mu a)_{\lambda+\mu}v-l(b)_\mu(l(a)_\lambda v),\label{bm1}\\
&r(b)_{-\lambda-\mu-\partial}(l(a)_\lambda v)-l(a)_\lambda(r(b)_{-\mu-\partial} v)
=r(b)_{-\lambda-\mu-\partial}(r(a)_{-\mu-\partial} v)-r(a_\lambda b)_{-\mu-\partial}v,\label{bm2}
\end{align}
for all $a,b\in R$ and $v\in V$. Therefore, we also denote this bimodule by $(V,l,r)$.
\end{remark}

\begin{definition}
Let $R$ be a left-symmetric conformal algebra. A conformal linear map $T_\lambda $: $R\rightarrow R[\lambda]$ is called a {\bf conformal semi-quasicentroid} if $T$
satisfies
\begin{equation}
T_{-\lambda-\mu-\partial}(a_\lambda b-b_\mu a)=a_\lambda T_{-\mu-\partial}(b)-b_\mu T_{-\lambda-\partial}(a),\;\;\;\;a, b\in R.
\end{equation}
\end{definition}
\begin{remark}
For any $b\in R$, there is a conformal semi-quasicentroid $T^b_{\lambda}$ associated to it defined by $T^b_{\lambda}(a)= a_{-\lambda-\partial} b$  for all $a\in R$. We call $T^b_{\lambda}$ an {\bf inner conformal semi-quasicentroid} of $R$ and denote by $CSQInn(R)$ the vector space of all inner conformal semi-quasicentroids of $R$.
\end{remark}

\begin{definition}
Let $R$ be a left-symmetric conformal algebra.  A {\bf twisted conformal derivation} of $R$ is $(D_\lambda,g_\lambda(\cdot,\partial))$ where
$ g_\lambda(\cdot,\partial)$ : $R\rightarrow \mathbb{C}[\lambda,\partial]$ is a left conformal linear map and
$D_\lambda$ : $R\rightarrow R[\lambda] $ is a conformal linear map satisfying for all $a,b\in R$
\begin{eqnarray*}
D_\lambda(a)_{\lambda+\mu}b+g_{-\lambda-\partial}(a,-\lambda-\mu)D_{\lambda+\mu}(b)=D_\lambda(a_\mu b)-a_\mu(D_\lambda(b)).
\end{eqnarray*}
In particular, for a twisted conformal derivation $(D_\lambda,g_\lambda(\cdot,\partial))$,  if $ g_\lambda(\cdot,\partial)$ is trivial, $D_\lambda$ is called a {\bf conformal derivation} of $R$.
\end{definition}
\begin{proposition}
Let $R=\mathbb{C}[\partial]L$ be a left-symmetric conformal algebra  with the $\lambda$-product defined by $[L_\lambda L]=(\lambda+\partial+c)L$, $c\in \mathbb{C}$. Then all conformal derivations of $R$ are zero.
\end{proposition}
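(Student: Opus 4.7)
The plan is to exploit the fact that $R$ is free of rank one so that a conformal derivation is encoded by a single polynomial, and then push the derivation axiom through to force that polynomial to vanish. Since $D_\lambda$ is conformal linear and $R = \mathbb{C}[\partial]L$, it is entirely determined by its value on $L$, which I write as $D_\lambda(L) = f(\lambda,\partial)L$ for some $f(\lambda,\partial)\in\mathbb{C}[\lambda,\partial]$. The goal is then to show $f = 0$.

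I apply the (untwisted) derivation Leibniz rule $D_\lambda(a_\mu b) = D_\lambda(a)_{\lambda+\mu}b + a_\mu D_\lambda(b)$ with $a = b = L$ and the defining relation $L_\mu L = (\mu+\partial+c)L$. Using the two sesquilinearity identities $(\partial a)_\nu b = -\nu\,a_\nu b$ and $a_\nu(\partial b) = (\partial+\nu)\,a_\nu b$ to move $f(\lambda,\partial)$ through the $\lambda$-product (on the left $\partial$ gets replaced by $-\nu$, on the right it gets replaced by $\partial+\nu$), the Leibniz identity becomes the polynomial identity
\begin{equation*}
(\lambda+\mu+\partial+c)\,f(\lambda,\partial)=f(\lambda,-\lambda-\mu)\,(\lambda+\mu+\partial+c)+f(\lambda,\partial+\mu)\,(\mu+\partial+c)
\end{equation*}
in $\mathbb{C}[\lambda,\mu,\partial]$.

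Now I specialize $\mu$ to squeeze out $f$. Setting $\mu=0$ collapses the identity to $\lambda\,f(\lambda,\partial) = f(\lambda,-\lambda)\,(\partial+\lambda+c)$. Since $\lambda$ and $\partial+\lambda+c$ are coprime in $\mathbb{C}[\lambda,\partial]$, this forces $(\partial+\lambda+c)\mid f(\lambda,\partial)$; write $f(\lambda,\partial) = (\partial+\lambda+c)\,h(\lambda,\partial)$ and substitute back to obtain $\lambda\,h(\lambda,\partial) = c\,h(\lambda,-\lambda)$. The right-hand side is $\partial$-independent, so expanding $h = \sum_k h_k(\lambda)\partial^k$ and comparing coefficients of $\partial^k$ for $k\ge 1$ kills all positive-degree terms; the surviving relation $(\lambda-c)h_0(\lambda) = 0$ forces $h_0 = 0$, hence $h = 0$ and $f = 0$. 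This argument is uniform in $c\in\mathbb{C}$.

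The only real obstacle is keeping the sesquilinearity substitutions honest: when I compute $(f(\lambda,\partial)L)_{\lambda+\mu}L$ versus $L_\mu(f(\lambda,\partial)L)$, the same polynomial $f$ contributes $f(\lambda,-\lambda-\mu)$ in one place and $f(\lambda,\partial+\mu)$ in the other, and mixing these up would give a false identity. Once the substitution is correctly set up, the specialization $\mu=0$ already does all the work, so no further case analysis on $c$ is needed.
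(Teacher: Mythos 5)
Your proof is correct, and while it begins exactly as the paper does, it finishes by a genuinely different polynomial argument. Both you and the paper apply the Leibniz rule to $D_\lambda(L_\mu L)$ and arrive at the same identity (your displayed equation is the paper's \eqref{eql} rearranged). The paper then keeps $\mu$ generic: it writes $a(\lambda,\partial)=\sum_{i=0}^n a_i(\lambda)\partial^i$, compares coefficients of $\partial^n$ to obtain $(\lambda-n\mu)a_n(\lambda)=0$ and hence $n\le 1$, and disposes of the remaining linear case by a second coefficient comparison. You instead specialize $\mu=0$, which collapses the identity to $\lambda f(\lambda,\partial)=f(\lambda,-\lambda)(\partial+\lambda+c)$; since $\partial+\lambda+c$ is irreducible in the UFD $\mathbb{C}[\lambda,\partial]$ and does not divide $\lambda$, it must divide $f$, and after cancelling it you are left with $\lambda h(\lambda,\partial)=c\,h(\lambda,-\lambda)$, whose right-hand side is $\partial$-free, forcing $h=0$. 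Your route trades the degree-bounding case analysis for a single specialization plus a divisibility argument, which is shorter and arguably cleaner; the paper's method is more mechanical but is the template it reuses throughout Sections 4--5, where a single specialization of $\mu$ would not always suffice. The only step worth making explicit in your write-up is the coprimality claim (irreducibility of $\partial+\lambda+c$ and the fact that it is not an associate of $\lambda$), and the observation that the argument degenerates harmlessly when $f(\lambda,-\lambda)=0$, since then $\lambda f(\lambda,\partial)=0$ gives $f=0$ at once.
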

\begin{proof}
Let $D_\lambda$ be a conformal derivation of $R$.
Set $D_\lambda L=a(\lambda,\partial)L$ for some $a(\lambda,\partial)\in \mathbb{C}[\lambda,\partial]$. Then
\begin{eqnarray*}
D_\lambda(L)_{\lambda+\mu}L=D_\lambda(L_\mu L)-L_\mu(D_\lambda(L))
\end{eqnarray*}
is equivalent to
\begin{eqnarray}\label{eql}
a(\lambda,-\lambda-\mu)(\lambda+\mu+\partial+c)=(\lambda+\mu+\partial+c)a(\lambda,\partial)-a(\lambda,\mu+\partial)(\mu+\partial+c).
\end{eqnarray}
Let $a(\lambda,\partial)=\sum^n_{i=0}a_i(\lambda)\partial^i$ with $a_n(\lambda)\neq0$. Then assuming $n>1$, if we equate terms of degree $n$ in $\partial$, we obtain
\begin{eqnarray*}
(\lambda-n\mu)a_n(\lambda)=0,
\end{eqnarray*}
getting a contradiction. So $a(\lambda,\partial)=a_1(\lambda)\partial+a_0(\lambda)$. Then \eqref{eql} becomes
\begin{eqnarray} \label{eql2}
a_1(\lambda)\lambda(-\lambda-2\mu-2\partial-c)=-a_0(\lambda)(\mu+\partial+c).
\end{eqnarray}
Therefore, $a_1(\lambda)=a_0(\lambda)=0$. Consequently, all conformal derivations of $R$ are zero.
\end{proof}
\begin{definition}\label{def2}
 Let $R$ be a left-symmetric conformal algebra, $Q$ a $\mathbb{C}[\partial]$-module and $E=R\oplus Q$ where the direct sum is the
 sum of $\mathbb{C}[\partial]$-modules. For a $\mathbb{C}[\partial]$-module homomorphism $\phi$ : $E\rightarrow E$, we consider the following
 diagram:
\begin{displaymath}
\xymatrix{
  & R \ar[d]^{Id} \ar[r]^i  & E \ar[d]^\phi \ar[r]^\pi &Q\ar[d]^{Id}            \\
  & R \ar[r]^i & E \ar[r]^\pi &Q                      }
\end{displaymath}
where $\pi$ : $E\rightarrow Q$ is the canonical projection of $E=R\oplus Q$ onto $Q$ and $i$ : $R\rightarrow E$ is the inclusion map.
A $\mathbb{C}[\partial]$-module homomorphism $\phi$ : $E\rightarrow E$ {\bf stabilizes $R$} (resp. {\bf co-stabilizes $Q$}) if the left square
(resp. the right square) of the above diagram is commutative.
 Let $\cdot_\lambda\cdot$ and $\circ_\lambda$ be two left-symmetric  conformal algebra structures on $E$ both containing $R$ as a
 left-symmetric conformal subalgebra. $(E,\cdot_\lambda\cdot)$ and $(E,\circ_\lambda) $ are called {\bf equivalent} denoted by
 $(E,\cdot_\lambda\cdot)\equiv(E,\circ_\lambda) $, if there exists a left-symmetric conformal algebra isomorphism
 $\phi$ : $(E,\cdot_\lambda\cdot)\rightarrow(E,\circ_\lambda) $ stabilizing $R$.\par
 If there exists a left-symmetric conformal algebra isomorphism $\phi$ : $(E,\cdot_\lambda\cdot)\rightarrow(E,\circ_\lambda) $
 which stabilizes $R$ and co-stabilizes $Q$, then $(E,\cdot_\lambda\cdot)$ and $(E,\circ_\lambda)$ are called {\bf cohomologous}, which is denoted
 by $(E,\cdot_\lambda\cdot)\approx(E,\circ_\lambda)  $.

It is not hard to see that `` $\equiv$ " and `` $\approx$  " are equivalence relations on the set of all left-symmetric
conformal algebra structures on $E$ containing $R$ as a left-symmetric conformal subalgebra and we denote the set of all equivalence
classes via `` $\equiv$ " and `` $\approx$  " by $CExtd(E,R)$ and $CExtd'(E,R)$ respectively. Therefore, $CExtd(E,R)$ is the classifying object of the $\mathbb{C}[\partial]$-split extending structures problem and $CExtd'(E,R)$ gives a classification of all left-symmetric conformal algebra structures on $E=R\oplus Q$ containing $R$ as a subalgebra up to isomorphism which stabilizes $R$ and co-stabilizes $Q$.
\end{definition}

\section{Unified products for left-symmetric conformal algebras}
In this section, we will introduce the notion of a unified product for left-symmetric conformal algebras and apply it to construct an object to
give a theoretical answer for the $\mathbb{C}[\partial]$-split extending structures problem.

\begin{definition}
Let $R$ be a left-symmetric conformal algebra and $Q$ a $\mathbb{C}[\partial]$-module. An {\bf extending datum} of $R$ by $Q$ is a system
$\Omega(R,Q)=(\varphi,\psi,l,r,g_\lambda(\cdot,\cdot), \circ_\lambda)$ consisting of four $\mathbb{C}[\partial]$-module homomorphisms and two conformal bilinear maps as follows:
$$
\begin{aligned}
&l,r:R\rightarrow Cend(Q),~~~~~~~~~\varphi,\psi:Q\rightarrow Cend(R),\\
&g_\lambda(\cdot,\cdot):Q\times Q\rightarrow R[\lambda],~~~~~~~~~\circ_\lambda:Q\times Q\rightarrow Q[\lambda].
\end{aligned}
$$
Let $\Omega(R,Q)=(\varphi,\psi,l,r,g_\lambda(\cdot,\cdot), \circ_\lambda)$ be an extending datum.
We denote by $R\natural _{\Omega(R,Q)}Q=R\natural Q$ the $\mathbb{C}[\partial]$-module $R\oplus Q $ with the natural $\mathbb{C}[\partial]$-module action:
$\partial(a+x)=\partial a+\partial x$ and the bilinear map $\cdot_\lambda \cdot : (R\oplus Q )\times(R\oplus Q)\rightarrow (R\oplus Q)[\lambda]$
defined by
\begin{equation}
(a+x)_\lambda(b+y)\\
=(a_\lambda b+\varphi (x)_\lambda b+\psi(y)_{-\lambda-\partial}a+g_\lambda(x,y))+(x\circ_\lambda y+l(a)_\lambda y+r(b)_{-\lambda-\partial} x)\label{lambdap}
\end{equation}
for all $a,b\in R,x,y\in Q$. $R\natural Q$ is called the {\bf unified product} of $R$ and $\Omega(R,Q)$ if it is a left-symmetric conformal algebra
with the $ \lambda$-products given by \eqref{lambdap}. In this case, the extending datum $\Omega(R,Q)=(\varphi,\psi,l,r,g_\lambda(\cdot,\cdot), \circ_\lambda)$
is called a {\bf left-symmetric conformal extending structure } of $R$ by $Q$. Then we denote by $\mathfrak{L}(R,Q)$ the set of all left-symmetric conformal extending structures of $R$ by $Q$.
\end{definition}

By \eqref{lambdap}, the following relations hold in $R\natural Q$ for all $a,b\in R$ and $x,y\in Q$:
\begin{equation}
\begin{aligned}
&a_\lambda y=\psi(y)_{-\lambda-\partial}a+l(a)_\lambda y,\;\;x_\lambda b=\varphi(x)_\lambda b+r(b)_{-\lambda-\partial}x,\\
&x_\lambda y=g_\lambda(x,y)+ x\circ_\lambda y.\label{fomula}
\end{aligned}
\end{equation}

Next, we present a necessary and sufficient condition for $R\natural Q $ to be a left-symmetric conformal algebra with the
$\lambda$-products defined by \eqref{lambdap}.

\begin{theorem}\label{lth}
Let $R$ be a left-symmetric conformal algebra, $Q$ be a $\mathbb{C}[\partial]$-module and $\Omega(R,Q)$ an extending datum of $R$ by $Q$. Then the following
statements are equivalent:\\
(i) $R\natural Q$ is a left-symmetric conformal algebra with the $\lambda$-products given by \eqref{lambdap}.\\
(ii) The following compatibility conditions hold for all $a,b\in R$ and $x,y,z\in Q$:
\begin{flalign}
&(\varphi(x)_\lambda a-\psi(x)_{\lambda}a)_{\lambda+\mu}b+\varphi(r(a)_{\mu}x-l(a)_\mu x)_{\lambda+\mu}b
=\varphi(x)_\lambda(a_\mu b)&\tag{LC1}\label{LC1}\\
&-a_\mu(\varphi(x)_\lambda b)-\psi(r(b)_{-\lambda-\partial}x)_{-\mu-\partial}a,\nonumber\\
&r(b)_{-\lambda-\mu-\partial}(r(a)_{\mu}x-l(a)_\mu x)=r(a_\mu b)_{-\lambda-\partial}x-l(a)_\mu(r(b)_{-\lambda-\partial} x),\tag{LC2}\label{LC2}\\
&\psi(x)_{-\lambda-\mu-\partial}(a_\lambda b-b_\mu a)=a_\lambda(\psi(x)_{-\mu-\partial}b)-b_\mu(\psi(x)_{-\lambda-\partial}a)
+\psi(l(b)_\mu x)_{-\lambda-\partial} a\tag{LC3}\label{LC3}\\
&-\psi(l(a)_\lambda x)_{-\mu-\partial}b,\nonumber\\
&l(a_\lambda b)_{\lambda+\mu}x-l(b_\mu a)_{\lambda+\mu}x=l(a)_\lambda(l(b)_\mu x)-l(b)_\mu(l(a)_\lambda x),\tag{LC4}\label{LC4}\\
&\psi(y)_{-\lambda-\mu-\partial}(\psi(x)_{\mu}a-\varphi(x)_\mu a )+g_{\lambda+\mu}(l(a)_\lambda x,y)-a_\lambda(g_\mu(x,y))
-\psi(x\circ_\mu y)_{-\lambda-\partial}a\tag{LC5}\label{LC5}\\
&=g_{\lambda+\mu}(r(a)_{-\mu-\partial}x,y)-\varphi(x)_\mu(\psi(y)_{-\lambda-\partial}a)-g_\mu (x,l(a)_\lambda y),\nonumber\\
&(l(a)_\lambda x)\circ_{\lambda+\mu}y+l(\psi(x)_{-\lambda-\partial} a)_{\lambda+\mu}y-l(a)_\lambda(x\circ_\mu y)=l(\varphi(x)_\mu a)_{\lambda+\mu}y\tag{LC6}\label{LC6}\\
&+(r(a)_{-\mu-\partial}x)\circ_{\lambda+\mu}y-r(\psi(y)_{-\lambda-\partial}a)_{-\mu-\partial} x-x\circ_\mu(l(a)_\lambda y),\nonumber\\
&(g_\lambda(x,y)-g_\mu(y,x))_{\lambda+\mu}a+\varphi(x\circ_\lambda y-y\circ_\mu x)_{\lambda+\mu}a=\varphi(x)_\lambda(\varphi(y)_\mu a)
-\varphi(y)_\mu(\varphi(x)_\lambda a)\tag{LC7}\label{LC7}\\
&+g_\lambda(x,r(a)_{-\mu-\partial}y)-g_\mu(y,r(a)_{-\lambda-\partial}x),\nonumber\\
&r(a)_{-\lambda-\mu-\partial}(x\circ_\lambda y-y\circ_\mu x)=r(\varphi(y)_\mu a)_{-\lambda-\partial}x-r(\varphi(x)_\lambda a)_{-\mu-\partial}y\tag{LC8}\label{LC8}\\
&+x\circ_\lambda(r(a)_{-\mu-\partial}y)-y\circ_\mu(r(a)_{-\lambda-\partial}x),\nonumber\\
&\psi(z)_{-\lambda-\mu-\partial}g_\lambda(x,y)+g_{\lambda+\mu}(x\circ_\lambda y,z)-\varphi(x)_\lambda g_\mu(y,z)-g_\lambda(x,y\circ_\mu z)\tag{LC9}\label{LC9}\\
&=\psi(z)_{-\lambda-\mu-\partial}g_\mu(y,x)+g_{\lambda+\mu}(y\circ_\mu x,z)-\varphi(y)_\mu g_\lambda(x,z)-g_\mu(y,x\circ_\lambda z),\nonumber\\
&l(g_\lambda(x,y))_{\lambda+\mu}z+(x\circ_\lambda y)\circ_{\lambda+\mu}z-r(g_\mu(y,z))_{-\lambda-\partial}x-x\circ_\lambda(y\circ_\mu z)\tag{LC10}\label{LC10}\\
&=l(g_\mu(y,x))_{\lambda+\mu}z+(y\circ_\mu x)\circ_{\lambda+\mu}z-r(g_\lambda(x,z))_{-\mu-\partial}y-y\circ_\mu(x\circ_\lambda z).\nonumber
\end{flalign}
\end{theorem}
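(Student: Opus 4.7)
The proof is a direct verification of the two axioms of a left-symmetric conformal algebra for the bilinear product \eqref{lambdap} on $R\natural Q$. Conformal sesquilinearity holds unconditionally: each of the six summands on the right-hand side of \eqref{lambdap} is individually sesquilinear because $\varphi,\psi,l,r$ are $\mathbb{C}[\partial]$-module homomorphisms taking values in spaces of conformal linear maps, and $g_\lambda,\circ_\lambda$ are conformal bilinear in the sense of Definition~\ref{def2.6}. For instance, $\psi(\partial y)_{-\lambda-\partial}a=(\lambda+\partial)\psi(y)_{-\lambda-\partial}a$ combines with $l(a)_\lambda(\partial y)=(\lambda+\partial)l(a)_\lambda y$ to give $(\lambda+\partial)\,a_\lambda y$, and similar checks dispose of all the other mixed partials. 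No conditions on the extending datum are needed here.

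The content of the theorem therefore lies in the left-symmetry identity
\[
(u_\lambda v)_{\lambda+\mu}w - u_\lambda(v_\mu w) = (v_\mu u)_{\lambda+\mu}w - v_\mu(u_\lambda w), \qquad u,v,w \in R\natural Q.
\]
I would begin by noting that this identity is $\mathbb{C}$-trilinear in $(u,v,w)$ and symmetric under the simultaneous swap $(u,v,\lambda,\mu)\leftrightarrow(v,u,\mu,\lambda)$. Decomposing each of $u,v,w$ along $R\oplus Q$ and using trilinearity, it suffices to test the identity on all $2^{3}=8$ pure choices; the swap symmetry then collapses these to six inequivalent cases. The all-$R$ case reduces to the left-symmetry of $R$ itself and is automatic, leaving five genuinely mixed cases.

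For each of the five mixed cases I would expand both sides using \eqref{fomula} and project the resulting identity along $R\natural Q = R\oplus Q$. Each case yields one $R$-valued identity and one $Q$-valued identity, for a total of ten, and matching them term by term shows that they are precisely LC1--LC10: the case $(u,v,w)\in R\times Q\times R$ (equivalent after swap to $Q\times R\times R$) produces LC1 on $R$ and LC2 on $Q$; the case $R\times R\times Q$ produces LC3 and LC4; the case $R\times Q\times Q$ (equivalent after swap to $Q\times R\times Q$) produces LC5 and LC6; the case $Q\times Q\times R$ produces LC7 and LC8; and the all-$Q$ case produces LC9 and LC10. Since the two projections from each case together reconstruct the full left-symmetry identity in that case, the ten identities collectively are equivalent to left-symmetry on $R\natural Q$, giving (i)$\Leftrightarrow$(ii).

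The calculation carries no conceptual difficulty, but the bookkeeping is the bulk of the work. The main obstacle is the careful handling of nested spectral parameters in expressions such as $\psi(y)_{-\lambda-\mu-\partial}(\psi(x)_{-\lambda-\partial}a)$ or $\psi(r(b)_{-\lambda-\partial}x)_{-\mu-\partial}a$: one must normal-order $\partial$ using the sesquilinearity of the constituent maps so that each term is brought into the canonical form in which LC1--LC10 are written, and conversely one must recognize that the list LC1--LC10 exhausts all independent identities so that assuming them collapses the whole calculation back to zero.
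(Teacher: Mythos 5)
Your proposal is correct and follows essentially the same route as the paper: sesquilinearity is automatic, and the left-symmetry test $F(u,v,w)=0$ is reduced by trilinearity and the swap symmetry $(u,v,\lambda,\mu)\leftrightarrow(v,u,\mu,\lambda)$ to the six pure cases, of which the all-$R$ case is the left-symmetry of $R$ and the five mixed cases split along $R\oplus Q$ into exactly \eqref{LC1}--\eqref{LC10}, with the same case-to-condition assignment the paper uses. Nothing further is needed.
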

\begin{proof}
Since $\varphi,\psi,l,r$ are $\mathbb{C}[\partial]$-module homomorphisms and $g_\lambda(\cdot,\cdot), \circ_\lambda$ are conformal bilinear maps, conformal sesquilinearity for \eqref{lambdap} is
naturally satisfied.

Define
\begin{align*}
&F(a+x,b+y,c+z)=((a+x)_\lambda(b+y))_{\lambda+\mu}(c+z)-(a+x)_\lambda((b+y)_\mu(c+z))\\
& -((b+y)_\mu(a+x))_{\lambda+\mu}(c+z)+(b+y)_\mu((a+x)_\lambda(c+z)),\;\;\;\;a, b, c\in R,\;\; x, y, z\in Q.
\end{align*}
Note that $R\natural Q$ is a left-symmetric conformal algebra if and only if $F(a+x, b+y, c+z)=0$ for all $a, b, c\in R$ and $ x, y, z\in Q$.

Therefore, we only need to prove that $F(a+x, b+y, c+z)=0$ for all $a,b,c\in R$ and $x,y,z\in Q$ if and
only if \eqref{LC1}-\eqref{LC10} hold.
By the left-symmetry of left-symmetric conformal algebras,
we have that  $F(a+x, b+y, c+z)=0$ holds for all $a,b,c\in R$ and $x,y,z\in Q$ if and only if
\begin{align*}
&F(a,b,c)=0, ~~ F(x,b,c)=0,~~F(a,b,z)=0, \\
&F(a,y,z)=0,~~F(x,y,c)=0,~~F(x,y,z)=0,
\end{align*}
are satisfied for all $a,b,c\in R$ and $x,y,z\in Q$.

Notice that $F(a,b,c)=0$ for all $a$, $b$, $c\in R$ if and only if $R$ is a left-symmetric conformal algebra.
Since
\begin{align*}
&F(a,x,y)\\
=&(\psi(x)_{-\lambda-\partial}a+l(a)_\lambda x)_{\lambda+\mu}y-a_\lambda(g_\mu(x,y)+x\circ_\mu y)\\
&-(\varphi(x)_\mu a+r(a)_{-\mu-\partial} x)_{\lambda+\mu}y+x_\mu(\psi(y)_{-\lambda-\partial}a+l(a)_\lambda y)\\
=&(\psi(y)_{-\lambda-\mu-\partial}(\psi(x)_{-\lambda-\partial}a)+g_{\lambda+\mu}(l(a)_\lambda x,y))+((l(a)_\lambda x)\circ_{\lambda+\mu}y+l(\psi(x)_{-\lambda-\partial} a)_{\lambda+\mu}y)\\
&-((a_\lambda(g_\mu(x,y))+\psi(x\circ_\mu y)_{-\lambda-\partial}a)+l(a)_\lambda(x\circ_\mu y))\\
&-((\psi(y)_{-\lambda-\mu-\partial}(\varphi(x)_\mu a )+g_{\lambda+\mu}(r(a)_{-\mu-\partial}x,y))+(l(\varphi(x)_\mu a)_{\lambda+\mu}y+(r(a)_{-\mu-\partial}x)\circ_{\lambda+\mu}y ))\\
&+(\varphi(x)_\mu(\psi(y)_{-\lambda-\partial}a)+g_\mu (x,l(a)_\lambda y))+(r(\psi(y)_{-\lambda-\partial}a)_{-\mu-\partial} x+x\circ_\mu(l(a)_\lambda y))\\
=&0,
\end{align*}
$F(a,x,y)=0 $ if and only if \eqref{LC5} and \eqref{LC6} hold.
Similarly, we can get the following results: $F(x,a,b)=0$ if and only if \eqref{LC1} and \eqref{LC2} hold; $F(a,b,x)=0 $ if and only if \eqref{LC3} and \eqref{LC4} hold; $ F(x,y,a)=0$ if and only if \eqref{LC7} and \eqref{LC8} hold; $F(x,y,z)=0 $ if and only if
 \eqref{LC9} and \eqref{LC10} hold. Then the proof is completed.

\end{proof}
\begin{remark}
In fact, \eqref{LC2} and \eqref{LC4} mean that $(Q,l,r) $ is a bimodule of $R$. In addition, \eqref{LC1},
\eqref{LC3},  \eqref{LC6} and \eqref{LC8} are the compatibility conditions defining a matched pair of
left-symmetric conformal algebras (see \cite[Theorem 3.14]{HL2}).
\end{remark}
\begin{corollary}\label{coro3.4}
Let $\Omega(R,Q)=(\varphi,\psi,l,r,g_\lambda(\cdot,\cdot), \circ_\lambda)$ be a left-symmetric conformal
extending structure of $R$ by $Q$. Define conformal bilinear maps $ \triangleleft_\lambda$ :
$Q\times \mathfrak{g}(R)\rightarrow Q[\lambda] $
by $x\triangleleft_\lambda a=r(a)_{-\lambda-\partial} x-l(a)_{-\lambda-\partial} x $, $\triangleright_\lambda $:
$Q\times \mathfrak{g}(R)\rightarrow \mathfrak{g}(R)[\lambda] $ by
$x\triangleright_\lambda a=\varphi(a)_\lambda x-\psi(a)_\lambda x $, $f_\lambda$ : $Q\times Q\rightarrow \mathfrak{g}(R)[\lambda] $ by
$f_\lambda(x,y)=g_\lambda(x,y)-g_{-\lambda-\partial}(y,x) $
and $\{\cdot_\lambda\cdot\}$ : $Q\times Q\rightarrow Q[\lambda]$ by $\{x_\lambda y\}=x\circ_\lambda y-y\circ_{-\lambda-\partial}x$ for all $a\in R$
and $x,y\in Q$. Then $\Omega(\mathfrak{g}(R),Q)=(\triangleleft_\lambda,\triangleright_\lambda,f_\lambda,\{\cdot_\lambda\cdot\})$ is a Lie
conformal extending structure of $\mathfrak{g}(R) $ by $Q$ (see \cite[Definition 3.1]{HS}).

\end{corollary}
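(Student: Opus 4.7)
The plan is to derive this statement from the functoriality of the sub-adjacent construction $R\rightsquigarrow\mathfrak{g}(R)$, using the unified product $R\natural Q$ as an intermediary object. The point is that we do not need to check ten separate Lie-type compatibility identities by hand: they will all come for free from the corresponding identities \eqref{LC1}--\eqref{LC10} on the left-symmetric side.

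First, because $\Omega(R,Q)$ is assumed to be a left-symmetric conformal extending structure, Theorem \mref{lth} guarantees that $R\natural Q$ is a left-symmetric conformal algebra with $\lambda$-products given by \meqref{lambdap}. Forming its sub-adjacent Lie conformal algebra then equips $R\oplus Q$ with a $\lambda$-bracket $[u_\lambda v]=u_\lambda v - v_{-\lambda-\partial}u$, and $\mathfrak{g}(R)$ sits inside it as a Lie conformal subalgebra because $R$ is a left-symmetric conformal subalgebra of $R\natural Q$.

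Next, I would compute the cross-brackets using \meqref{fomula}:
\begin{align*}
[x_\lambda a] &= \bigl(\varphi(x)_\lambda a-\psi(x)_\lambda a\bigr)+\bigl(r(a)_{-\lambda-\partial}x-l(a)_{-\lambda-\partial}x\bigr),\\
[x_\lambda y] &= \bigl(g_\lambda(x,y)-g_{-\lambda-\partial}(y,x)\bigr)+\bigl(x\circ_\lambda y-y\circ_{-\lambda-\partial}x\bigr),
\end{align*}
where the first summand in each line lies in $R=\mathfrak{g}(R)$ and the second lies in $Q$. These are precisely $x\triangleright_\lambda a + x\triangleleft_\lambda a$ and $f_\lambda(x,y)+\{x_\lambda y\}$, so the Lie conformal structure on $\mathfrak{g}(R)\oplus Q$ has exactly the shape of a Lie conformal unified product of $\mathfrak{g}(R)$ with $Q$ along the datum $(\triangleleft_\lambda,\triangleright_\lambda,f_\lambda,\{\cdot_\lambda\cdot\})$.

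Finally, I would invoke the Lie conformal counterpart of Theorem \mref{lth} from \cite{HS}, which characterizes Lie conformal extending structures as exactly those data for which the associated product on $\mathfrak{g}(R)\oplus Q$ is a Lie conformal algebra. Since the previous paragraph exhibits such a Lie conformal algebra, $\Omega(\mathfrak{g}(R),Q)$ is automatically a Lie conformal extending structure. The only routine checks left are the conformal sesquilinearity and $\mathbb{C}[\partial]$-linearity of $\triangleleft_\lambda,\triangleright_\lambda,f_\lambda,\{\cdot_\lambda\cdot\}$, which follow directly from those of $\varphi,\psi,l,r,g_\lambda,\circ_\lambda$ together with $(\partial\varphi)_\lambda=-\lambda\varphi_\lambda$ (and likewise for $\psi,l,r$). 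The only mild obstacle I anticipate is bookkeeping the substitution $\partial\mapsto -\lambda-\partial$ in the terms $v_{-\lambda-\partial}u$; these sign shifts are easy to mis-track but introduce no genuinely new content.
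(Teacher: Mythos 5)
Your proposal is correct and follows essentially the same route as the paper: the paper likewise forms the unified product $R\natural Q$, passes to its sub-adjacent Lie conformal algebra $\mathfrak{g}(R\natural Q)$, computes the $\lambda$-brackets to see that they take exactly the shape of a Lie conformal unified product along $(\triangleleft_\lambda,\triangleright_\lambda,f_\lambda,\{\cdot_\lambda\cdot\})$, and concludes via the Lie conformal analogue from \cite{HS}. Your cross-bracket computations agree with the paper's (up to an apparent argument-swap typo in the statement's definition of $\triangleright_\lambda$), so no gap remains.
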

\begin{proof}
Let $R\natural Q$ be the unified product of $R$ and $\Omega(R,Q)$. Then the $\lambda$-brackets on $\mathfrak{g}(R\natural Q)$ are given by
\begin{eqnarray*}
&&[(a+x)_\lambda(b+y)]\\
&=& (a+x)_\lambda (b+y)-(b+y)_{-\lambda-\partial}(a+x)\\
&=&(a_\lambda b-b_{-\lambda-\partial}a+(\varphi(x)_\lambda b-\psi(x)_\lambda b)-(\varphi(y)_{-\lambda-\partial} a-\psi(y)_{-\lambda-\partial} a)+g_\lambda(x,y)-g_{-\lambda-\partial}(y,x))\\
&& +(x\circ_\lambda y-y\circ_{-\lambda-\partial}x+(l(a)_\lambda y-r(a)_\lambda y)-(l(b)_{-\lambda-\partial}x-r(b)_{-\lambda-\partial} x))\\
&=&([a_\lambda b]+x\triangleright_\lambda b-y\triangleright_{-\lambda-\partial}a+f_\lambda(x,y))+(\{x_\lambda y\}+x\triangleleft_\lambda b
-y\triangleleft_{-\lambda-\partial}a),\;\;\; a, b\in R, x, y\in Q.
\end{eqnarray*}
Therefore, $\Omega(\mathfrak{g}(R),Q)=(\triangleleft_\lambda,\triangleright_\lambda,f_\lambda,\{\cdot_\lambda\cdot\})$ is a Lie
conformal extending structure of $\mathfrak{g}(R) $ by $Q$.
\end{proof}

We present an example of left-symmetric conformal extending structures. More examples will be given in Section \ref{sec5}.
\begin{example}\cite[Proposition 3.7]{HL2}
Let $\Omega(R,Q)=(\varphi,\psi,l,r,g_\lambda(\cdot,\cdot),\circ_\lambda)$ be an extending datum of a left-symmetric conformal algebra $R$ by a $\mathbb{C}[\partial]$-module $Q$ where $\varphi,\psi,g_\lambda(\cdot,\cdot)$ and $\circ_\lambda$ are trivial. Denote this extending datum simply by $\Omega(R,Q)=(l,r) $. Then $\Omega(R,Q)$ is a left-symmetric conformal extending structure of $R$ by $Q$ if and only if $(Q,l,r)$ is an $R$-bimodule i.e. \eqref{LC2} and \eqref{LC4}
are satisfied. The associated unified product $R\natural Q$ denoted by $R\natural_{l,r} Q$ is called the {\bf semi-direct product} of $R$ and $Q$. The $\lambda$-products on $R\natural_{l,r} Q$ are given by
$$
(a+x)_\lambda(b+y)=a_\lambda b+l(a)_\lambda x+r(b)_{-\lambda-\partial}y,
$$
for all $a,b\in R$ and $x,y\in Q$.
\end{example}

It is straightforward to see that $R$ is a left-symmetric conformal subalgebra of $R\natural Q$. Therefore, for any $\Omega(R,Q)$, the unified product of $R$ and $\Omega(R,Q)$ satisfies the condition in the $\mathbb{C}[\partial]$-split extending structures problem. Next, we show that any left-symmetric conformal algebra $E=R\oplus Q$ containing $R$ as a subalgebra is isomorphic to a unified product.

\begin{theorem}\label{th35}
Let $R$ be a left-symmetric conformal algebra and $Q$ a $\mathbb{C}[\partial]$-module. Set $E=R\oplus Q$ where the direct sum is the sum of $\mathbb{C}[\partial]$-modules. Suppose that $(E, \cdot_\lambda \cdot)$ is a left-symmetric  conformal algebra containing
$R$ as a subalgebra. Then there exists a left-symmetric conformal extending structure
$\Omega(R,Q)= (\varphi,\psi,l,r,g_\lambda(\cdot,\cdot), \circ_\lambda)$ of $R$ by $Q$ such that  $E\cong R\natural Q $ as left-symmetric conformal algebras which stabilizes $R$ and co-stabilizes $Q$, where $R\natural Q$ is the unified product of $R$ and $\Omega(R,Q)$.
\end{theorem}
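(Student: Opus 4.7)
The plan is to extract the extending datum directly from the given left-symmetric conformal algebra structure on $E$, using the projections associated with the decomposition $E=R\oplus Q$. Let $\pi_R:E\to R$ and $\pi_Q:E\to Q$ be the canonical $\mathbb{C}[\partial]$-linear projections; for $a,b\in R$ and $x,y\in Q$ I would set
\begin{align*}
l(a)_\lambda y &:= \pi_Q(a_\lambda y), & \psi(y)_{-\lambda-\partial}a &:= \pi_R(a_\lambda y),\\
r(b)_{-\lambda-\partial}x &:= \pi_Q(x_\lambda b), & \varphi(x)_\lambda b &:= \pi_R(x_\lambda b),\\
x\circ_\lambda y &:= \pi_Q(x_\lambda y), & g_\lambda(x,y) &:= \pi_R(x_\lambda y).
\end{align*}
The hypothesis that $R$ is a subalgebra ensures $a_\lambda b\in R[\lambda]$, so no extra data is needed for the $R$-$R$ product; comparing with \eqref{lambdap} shows that these definitions are in fact forced by the decomposition of the $\lambda$-product into $R$- and $Q$-components.

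The first step is to verify that these six pieces of data form an admissible extending datum: that $l,r,\varphi,\psi$ are $\mathbb{C}[\partial]$-module homomorphisms into $Cend(\cdot)$, and that $g_\lambda$ and $\circ_\lambda$ are conformal bilinear. Each required identity follows by applying the appropriate projection to a conformal sesquilinearity identity on $E$. For instance, $\pi_Q((\partial a)_\lambda y)=-\lambda\pi_Q(a_\lambda y)$ gives $l(\partial a)_\lambda=-\lambda l(a)_\lambda$ in $Cend(Q)$, while $\pi_Q(a_\lambda(\partial y))=(\partial+\lambda)\pi_Q(a_\lambda y)$ gives the right conformal linearity of $l(a)$; the corresponding verifications for $r,\varphi,\psi,g_\lambda,\circ_\lambda$ are analogous, after a routine unpacking of the $-\lambda-\partial$ convention in the cases of $\psi$ and $r$.

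With $\Omega(R,Q):=(\varphi,\psi,l,r,g_\lambda(\cdot,\cdot),\circ_\lambda)$ fixed, the key observation is that the $\lambda$-product \eqref{lambdap} on $R\natural Q$ matches, term by term, the original product on $E$ under the identification $R\natural Q=R\oplus Q=E$. Consequently the identity map $\phi:R\natural Q\to E$, $a+x\mapsto a+x$, is a $\mathbb{C}[\partial]$-linear bijection intertwining the two products, so $R\natural Q$ inherits left-symmetry from $E$ and is itself a left-symmetric conformal algebra. By Theorem \ref{lth}, this forces the compatibility conditions \eqref{LC1}--\eqref{LC10} to hold, so $\Omega(R,Q)$ is indeed a left-symmetric conformal extending structure. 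That $\phi$ stabilizes $R$ and co-stabilizes $Q$ is immediate from its definition. The main source of potential confusion is purely notational---keeping the $-\lambda-\partial$ conventions for $\psi$ and $r$ straight when identifying $\psi(y)_\mu a$ with $\pi_R(a_{-\mu-\partial}y)$---but once that bookkeeping is handled consistently, the entire argument reduces to invoking Theorem \ref{lth}.
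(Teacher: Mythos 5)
Your proposal is correct and takes essentially the same route as the paper: the paper defines the extending datum via the canonical projection $p:E\to R$ (so that $\pi_R=p$ and $\pi_Q=\mathrm{id}-p$), giving exactly your six formulas, and then concludes that the identity map transports the product on $E$ to the unified product, so Theorem \ref{lth} forces the compatibility conditions. The only difference is that the paper defers the verification to a cited reference, whereas you carry out the bookkeeping explicitly.
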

\begin{proof}
Since $E=R\oplus Q$, there exists a canonical $\mathbb{C}[\partial]$-module homomorphism $p$: $E\rightarrow R$ such that $p(a)=a$ for all $a\in R$. Thus we define an extending datum
$\Omega(R,Q)= (\varphi,\psi,l,r,g_\lambda(\cdot,\cdot), \circ_\lambda)$ of $R$ by $Q$ as follows:
$$
\begin{aligned}
&\varphi_:Q\rightarrow Cend(R),~~\varphi(x)_\lambda a:=p(x_\lambda a),\\
&\psi:Q\rightarrow Cend(R),~~\psi(x)_\lambda a:=p(a_{-\lambda-\partial}x),\\
&l:R\rightarrow Cend(Q),~~l(a)_\lambda x:=a_\lambda x-p(a_\lambda x),\\
&r:R\rightarrow Cend(Q),~~r(a)_\lambda x:=x_{-\lambda-\partial}a-p(x_{-\lambda-\partial} a),\\
&g_\lambda(\cdot,\cdot):Q\times Q\rightarrow R[\lambda],~~g_\lambda(x,y):=p(x_\lambda y),\\
&\circ_\lambda:Q\times Q\rightarrow Q[\lambda],~~x\circ_\lambda y:=x_\lambda y-p(x_\lambda y),
\end{aligned}
$$
for all $a\in R$ and $x,y\in Q$. By a similar proof as that in \cite[Theorem 2.4]{HS}, one can show that $\Omega(R,Q)= (\varphi,\psi,l,r,g_\lambda(\cdot,\cdot), \circ_\lambda)$ is a left-symmetric conformal
structure and $E\cong R\natural Q$ as left-symmetric conformal algebras, which stabilizes $R$ and co-stabilizes $Q$.

\delete{
 We can define a map $ \tau$ : $R\times Q\rightarrow E $ by $\tau(a,x)=a+x $. It is clear that $ \tau$ is a
$\mathbb{C}[\partial]$-module isomorphism, where the inverse of $\tau $ is $\tau^{-1}$ : $ E\rightarrow R\times Q$ which is defined by
$\tau^{-1}(e)=(p(e),e-p(e)) $ where $e$ is an identity element in $E$. If $\tau $: $ R\times Q\rightarrow E$ is also an isomorphism of
left-symmetric (resp. Novikov) conformal algebras, there exists a unique $\lambda$-product on $R\times Q$ given by
\begin{equation}
(a,x)_\lambda(b,y)=\tau^{-1}(\tau(a,x)_\lambda\tau(b,y))\label{lambda3}
\end{equation}
for all $a,b\in R$ and $x,y\in Q$. Thereby , we only need to check that the $\lambda$-product defined by \eqref{lambda3} is just one given by
\eqref{lambdap} related with the above extending system
$\Omega(R,Q)= (\varphi,\psi,l,r,g_\lambda,\cdot\circ_\lambda\cdot)$ to complete this proof. It is check as follows:
for any $a,b\in R$ and $x,y\in Q$
$$
\begin{aligned}
&(a,x)_\lambda(b,y)\\
=&\tau^{-1}(\tau(a,x)_\lambda\tau(b,y))\\
=&\tau^{-1}((a+x)_\lambda(b+y))\\
=&(p(a_\lambda b+x_\lambda b+a_\lambda y+x_\lambda y),a_\lambda+x_\lambda b+a_\lambda y+x_\lambda y-p(a_\lambda b+x_\lambda b+a_\lambda y+x_\lambda y))\\
=&(a_\lambda b+\varphi(x)_\lambda b+\psi(y)_{-\lambda-\partial}a+g_\lambda(x,y),r(b)_{-\lambda-\partial}x+l(a)_\lambda y+<x_\lambda y> ).
\end{aligned}
$$
Therefore, it is clear that to see that $\tau$ stabilizes $R$ and co-stabilizes $Q$. Then, the proof is finished.}
\end{proof}

\begin{definition}\label{3.10}
Let $R$ be a left-symmetric conformal algebra and $Q$ a $\mathbb{C}[\partial]$-module. If there exists a pair of $\mathbb{C}[\partial]$-module
homomorphisms $(u,v)$, where $u$: $Q\rightarrow R$, $v\in Aut_{\mathbb{C}[\partial]}(Q)$ such that the left-symmetric  conformal extending structure
$\Omega(R,Q)= (\varphi,\psi,l,r,g_\lambda(\cdot,\cdot), \circ_\lambda )$ can be obtained from another corresponding
extending structure $\Omega'(R,Q)= (\varphi',\psi',l',r',g'_\lambda(\cdot,\cdot),\circ'_\lambda) $ using $(u,v)$
as follows:
\begin{align}
&\psi(x)_{-\lambda-\partial}a+u(l(a)_\lambda x)=a_\lambda u(x)+\psi'(v(x))_{-\lambda-\partial}a,\label{eq1}\\
&v(l(a)_\lambda x)=l'(a)_\lambda v(x),\label{eq2}\\
&\varphi(x)_\lambda a+u(r(a)_{-\lambda-\partial}x)=u(x)_\lambda a+\varphi'(v(x))_\lambda a,\label{eq3}\\
&v(r(a)_{-\lambda-\partial}x)=r'(a)_{-\lambda-\partial}v(x),\label{eq4}\\
&g_\lambda(x,y)+u(x\circ_\lambda y)=u(x)_\lambda u(y)+\varphi'(v(x))_\lambda u(y)+\psi'(v(y))_{-\lambda-\partial}u(x)+g'_\lambda(v(x),v(y)),\label{eq5}\\
&v(x\circ_\lambda y)=r'(u(y))_{-\lambda-\partial}v(x)+l'(u(x))_\lambda v(y)+v(x)\circ'_\lambda v(y),\label{eq6}
\end{align}
for all $a,b\in R$ and $x,y\in Q$, then $\Omega(R,Q)$ and $\Omega'(R,Q)$ are called {\bf equivalent} and we denote it by $\Omega(R,Q)\equiv\Omega'(R,Q)$.
In particular, if $v=\Id$, $\Omega(R,Q)$ and $\Omega'(R,Q)$ are called {\bf cohomologous} and we denote it by
$\Omega(R,Q)\approx\Omega'(R,Q) $.
\end{definition}
\begin{lemma}\label{lemma31}
Let $\Omega(R,Q)= (\varphi,\psi,l,r,g_\lambda(\cdot,\cdot), \circ_\lambda)$ and
$\Omega'(R,Q)= (\varphi',\psi',l',r',g'_\lambda(\cdot,\cdot), \circ'_\lambda)$ be two left-symmetric
conformal extending structures of $R$ by $Q$ and $R\natural Q$, $R\natural' Q$ be the corresponding unified products. Thus $R\natural Q\equiv R\natural' Q$
if and only if $\Omega(R,Q)\equiv\Omega'(R,Q)$. Moreover, $R\natural Q\approx R\natural' Q $ if and only if $\Omega(R,Q)\approx\Omega'(R,Q) $.
\end{lemma}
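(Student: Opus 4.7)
The plan is to parametrize all $\mathbb{C}[\partial]$-module isomorphisms $\phi \colon R\natural Q \to R\natural' Q$ that stabilize $R$, and then translate the condition that $\phi$ be a left-symmetric conformal algebra isomorphism into equations on the parameters that match \eqref{eq1}--\eqref{eq6} exactly. First, I would observe that any $\mathbb{C}[\partial]$-module map $\phi$ with $\phi|_R = \Id$ must be of the form $\phi(a+x) = a + u(x) + v(x)$ for uniquely determined $\mathbb{C}[\partial]$-module homomorphisms $u \colon Q \to R$ and $v \colon Q \to Q$ (just project $\phi(x)$ onto the two summands). Moreover, $\phi$ is bijective if and only if $v \in \Aut_{\mathbb{C}[\partial]}(Q)$, and $\phi$ co-stabilizes $Q$ if and only if $v = \Id$.

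Second, I would test the multiplicativity condition $\phi\bigl((a+x) \cdot_\lambda (b+y)\bigr) = \phi(a+x) \cdot_\lambda' \phi(b+y)$ by splitting inputs according to which summand they come from. For pure $R$-inputs $(a,b)$ the identity is automatic since $R$ is a subalgebra of both sides and $\phi|_R=\Id$. For the mixed input $(a,y)$, I would expand the left side with \eqref{lambdap} in $\Omega$, apply $\phi$, expand the right side with \eqref{lambdap} in $\Omega'$, and equate the $R$- and $Q$-components separately; the $R$-component yields \eqref{eq1} and the $Q$-component yields \eqref{eq2}. The analogous treatment of $(x,b)$ produces \eqref{eq3} and \eqref{eq4}, and the pure $Q$-pair $(x,y)$ produces \eqref{eq5} and \eqref{eq6}. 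Therefore $R$-stabilizing left-symmetric conformal algebra isomorphisms $\phi\colon R\natural Q\to R\natural' Q$ correspond bijectively to pairs $(u,v)$ with $v\in\Aut_{\mathbb{C}[\partial]}(Q)$ satisfying the system \eqref{eq1}--\eqref{eq6}, i.e., precisely to witnesses of $\Omega(R,Q)\equiv\Omega'(R,Q)$.

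The converse direction is the same calculation read in reverse: given $(u,v)$ with $v$ invertible satisfying the six identities, defining $\phi(a+x) := a + u(x) + v(x)$ produces a $\mathbb{C}[\partial]$-module isomorphism that stabilizes $R$, and the componentwise verification just outlined shows $\phi$ preserves the $\lambda$-products. The cohomologous version follows at once since both $\approx$-relations (on algebras and on extending structures) are exactly the specializations $v = \Id$ of the corresponding $\equiv$-relations, and the bijection above is natural in $v$.

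The main obstacle is bookkeeping rather than conceptual: one must correctly expand expressions like $\phi(a)\cdot_\lambda' \phi(y) = a \cdot_\lambda' (u(y)+v(y))$ and $\phi(x)\cdot_\lambda'\phi(y) = (u(x)+v(x))\cdot_\lambda' (u(y)+v(y))$, remembering that $u(x)\in R$ so that $u(x)\cdot_\lambda' u(y)$ is the intrinsic product in $R$ (not mediated by $\varphi',\psi'$), whereas a cross term like $u(x)\cdot_\lambda' v(y)$ unfolds via \eqref{fomula} into $\psi'(v(y))_{-\lambda-\partial}u(x) + l'(u(x))_\lambda v(y)$, and analogously for $v(x)\cdot_\lambda' u(y)$. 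Once this expansion is executed carefully on all four input types and the $R/Q$-projections are separated, the identities \eqref{eq1}--\eqref{eq6} fall out in exactly the order listed, completing both implications simultaneously.
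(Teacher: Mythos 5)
Your proposal is correct and follows exactly the route the paper takes: the paper also writes any $R$-stabilizing morphism as $\tau(a+x)=(a+u(x))+v(x)$ and then reduces multiplicativity to \eqref{eq1}--\eqref{eq6}, deferring the componentwise expansion to the analogous computation in \cite[Lemma 3.6]{HS}. Your write-up merely carries out in detail the case-by-case expansion that the paper cites, including the correct identification of co-stabilizing $Q$ with $v=\Id$, so there is nothing to add.
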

\begin{proof}
Let $\tau$ : $R\natural Q\rightarrow R\natural' Q $ be a homomorphism of left-symmetric
conformal algebras which stabilizes $R$. Since $\tau$ stabilizes $R$, then $\tau(a)=a$ for all $a\in R$. Thus, we set $\tau(a+x)=(a+u(x))+v(x)$ for all $a\in R$ and $x\in Q$,
where $u$: $Q\rightarrow R$, $v$: $Q\rightarrow Q$ are two linear maps. Similar to the proof in \cite[Lemma 3.6]{HS}, it is straightforward to check that  $\tau$ is a left-symmetric
conformal algebra isomorphism
if and only if $u$ is a $\mathbb{C}[\partial]$-module homomorphism, $v\in Aut_{\mathbb{C}[\partial]}(Q)$ and \eqref{eq1}-\eqref{eq6} hold. Moreover,  it is easy to see that a left-symmetric conformal algebra isomorphism $\tau$ co-stabilizes $Q$
if and only if $u$ is a $\mathbb{C}[\partial]$-module homomorphism, $v=\Id_Q$ and \eqref{eq1}-\eqref{eq6} hold.
\end{proof}
\begin{theorem} \label{th3.12}
Let $R$ be a left-symmetric conformal algebra, $Q$ be a $\mathbb{C}[\partial]$-module, and $E=R\oplus Q$ where the direct sum is the sum of $\mathbb{C}[\partial]$-modules. Then we have\\
(i) \delete{`` $\equiv$ " is an equivalence relation on the set $\mathfrak{L}(R,Q)$ of all left-symmetric conformal extending structures of $R$ by $Q$.}
 Set $\mathcal{H}^2_R(Q,R):= \mathfrak{L}(R,Q)/\equiv$. Then the map
 \begin{equation}
 \mathcal{H}^2_R(Q,R)\rightarrow CExtd(E,R), ~\overline{\Omega(R,Q)}\mapsto (R\natural Q,\cdot_\lambda\cdot)
 \end{equation}
 is bijective, where $\overline{\Omega(R,Q)} $ is the equivalence class of $ \Omega(R,Q)$ under $\equiv$.\\
(ii) Set $\mathcal{H}^2(Q,R):= \mathfrak{L}(R,Q)/\approx$. Then the map
 \begin{equation}
 \mathcal{H}^2(Q,R)\rightarrow CExtd'(E,R), ~\overline{\overline{\Omega(R,Q)}}\mapsto (R\natural Q,\cdot_\lambda\cdot)
 \end{equation}
is bijective, where $\overline{\overline{\Omega(R,Q)}} $ is the equivalence class of $ \Omega(R,Q)$ under $\approx$.
\end{theorem}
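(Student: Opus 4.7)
The plan is to package together the earlier results (Theorem~\ref{th35} and Lemma~\ref{lemma31}) with a check that $\equiv$ and $\approx$ are equivalence relations, so that the two maps become essentially tautological bijections.

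First I would verify that $\equiv$ is an equivalence relation on $\mathfrak{L}(R,Q)$. Reflexivity $\Omega(R,Q)\equiv\Omega(R,Q)$ is witnessed by the pair $(u,v)=(0,\Id_Q)$, for which the six identities \eqref{eq1}--\eqref{eq6} collapse to trivial equalities. For symmetry, given $(u,v)$ witnessing $\Omega\equiv\Omega'$, I would show that $(u',v'):=(-u\circ v^{-1},v^{-1})$ witnesses $\Omega'\equiv\Omega$; this is a direct rearrangement of \eqref{eq1}--\eqref{eq6}. Transitivity follows from composition: if $(u_1,v_1)$ witnesses $\Omega\equiv\Omega'$ and $(u_2,v_2)$ witnesses $\Omega'\equiv\Omega''$, then $(u_2\circ v_1+u_1,\, v_2\circ v_1)$ should do the job. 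The same checks, restricted to $v=\Id_Q$ throughout, show that $\approx$ is an equivalence relation.

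Next I would prove (i). The map $\overline{\Omega(R,Q)}\mapsto (R\natural Q,\cdot_\lambda\cdot)$ is well defined by the `` only if '' half of Lemma~\ref{lemma31}: equivalent extending structures produce equivalent (in the sense of Definition~\ref{def2}) unified products, so the class in $CExtd(E,R)$ does not depend on the representative. Surjectivity is exactly Theorem~\ref{th35}: any left-symmetric conformal algebra structure $(E,\cdot_\lambda\cdot)$ on $E=R\oplus Q$ containing $R$ as a subalgebra is isomorphic, via an isomorphism that stabilizes $R$ (and in fact co-stabilizes $Q$), to some unified product $R\natural Q$ built from an extending structure $\Omega(R,Q)\in\mathfrak{L}(R,Q)$, so the class $\overline{\Omega(R,Q)}$ maps to $(E,\cdot_\lambda\cdot)$. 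Injectivity is the `` if '' half of Lemma~\ref{lemma31}: if two unified products $R\natural Q$ and $R\natural' Q$ are equivalent in $CExtd(E,R)$, then the underlying extending structures satisfy $\Omega(R,Q)\equiv\Omega'(R,Q)$ in $\mathfrak{L}(R,Q)$, so they define the same class in $\mathcal{H}^2_R(Q,R)$.

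Part (ii) runs in perfect parallel with the same three-step argument, but using the second half of Lemma~\ref{lemma31} (cohomologous vs. $\approx$) and the part of Theorem~\ref{th35} which states that the isomorphism there also co-stabilizes $Q$; this suffices for surjectivity onto $CExtd'(E,R)$.

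The main obstacle is the bookkeeping in the equivalence-relation check: rewriting \eqref{eq1}--\eqref{eq6} after inverting or composing the pairs $(u,v)$ involves juggling several $\lambda$-shifted conformal sesquilinear maps on both $R$ and $Q$, and one must verify that the resulting identities still have the correct form required by Definition~\ref{3.10}. Once that symbolic manipulation is done, the rest of the proof is formal, since Theorem~\ref{th35} and Lemma~\ref{lemma31} together already give the surjectivity and the injectivity of the map; the only role of the present theorem is to repackage those facts as statements about equivalence classes.
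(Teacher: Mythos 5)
Your proposal is correct and follows essentially the same route as the paper, whose entire proof is the one-line citation of Theorem~\ref{lth}, Theorem~\ref{th35} and Lemma~\ref{lemma31}: well-definedness and injectivity from Lemma~\ref{lemma31}, surjectivity from Theorem~\ref{th35}. Your additional explicit check that $\equiv$ and $\approx$ are equivalence relations (with the witnesses $(-u\circ v^{-1},v^{-1})$ for symmetry and $(u_2\circ v_1+u_1,\,v_2\circ v_1)$ for transitivity) is a correct elaboration of a point the paper leaves implicit.
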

\begin{proof}
It follows from Theorem \ref{lth}, Theorem \ref{th35} and Lemma \ref{lemma31}.
\end{proof}
\begin{remark}
 By Theorem \ref{th3.12}, $\mathcal{H}^2_R(Q,R)$ classifies all left-symmetric conformal algebra structures on $E=R\oplus Q$ containing $R$ as a subalgebra up to isomorphism that stabilizes $R$. Thus, $\mathcal{H}^2_R(Q,R)$ provides a theoretical answer to the $\mathbb{C}[\partial]$-split extending structures problem.
 Finally, all left-symmetric conformal algebra structures on $E=R\oplus Q$ containing $R$ as a subalgebra up to isomorphism that stabilizes $R$ and co-stabilizes $Q$  are characterized by $\mathcal{H}^2(Q,R)$.
\end{remark}

\section{Unified products when $Q=\mathbb{C}[\partial]x$}
In this section, we investigate the general unified products when $R$ is a free $\mathbb{C}[\partial]$-module and $Q$ is a free $\mathbb{C}[\partial]$-module of rank 1. Set $Q =\mathbb{C}[\partial]x$.

\delete{we set $R$ be a left-symmetric conformal algebra, $Q$ be a  $ \mathbb{C}[\partial]$-module and $E$ be that in the $\mathbb{C}[\partial]$-split extending structures problem. If $Q$ is a torsion $\mathbb{C}[\partial]$-module, on account of the fact that the torsion element must be a central element, $Q$ is a two-sided ideal contained in the center of $E$. Hence in this case, $E\cong R\oplus Q$ as left-symmetric conformal algebras, and $Q$ is a trivial  two-sided ideal of $E$.}
\begin{definition}
Let $R=\mathbb{C}[\partial]V$ be a left-symmetric conformal algebra which is a free $\mathbb{C}[\partial]$-module. A {\bf flag datum} of $R$ is a
sextuple $(h_\lambda(\cdot,\partial),k_\lambda(\cdot,\partial), D_\lambda,T_\lambda,M(\lambda,\partial),P(\lambda,\partial) )$, where
$P(\lambda,\partial)$ $\in\mathbb{C}[\lambda,\partial] $, $M(\lambda,\partial)\in R[\lambda] $,
$h_\lambda(\cdot,\partial):R\rightarrow \mathbb{C}[\lambda,\partial] $ and $k_\lambda(\cdot,\partial):R\rightarrow \mathbb{C}[\lambda,\partial] $ are two left conformal linear maps,
$D_\lambda:R\rightarrow R[\lambda] $ and $T_\lambda:R\rightarrow R[\lambda]  $
are conformal linear maps satisfying the following conditions for all $a,b\in V$ :
\begin{align}
&(D_\lambda(a)-T_\lambda(a))_{\lambda+\mu}b
+(k_\mu(a,-\lambda-\mu)-h_\mu(a,-\lambda-\mu) )D_{\lambda+\mu}(b)\label{lfd1}\\
&=D_\lambda(a_\mu b)-a_\mu(D_\lambda(b))-k_{-\lambda-\mu-\partial}(b,\mu+\partial)T_{-\mu-\partial}(a),\nonumber\\
&(k_\mu(a,-\lambda-\mu)-h_\mu(a,-\lambda-\mu))k_{-\lambda-\mu-\partial}(b,\partial)
=k_{-\lambda-\partial}(a_\mu b,\partial)-k_{-\lambda-\mu-\partial}(b,\mu+\partial)h_\mu(a,\partial),\label{lfd2}\\
&T_{-\lambda-\mu-\partial}(a_\lambda b-b_\mu a)=a_\lambda T_{-\mu-\partial}(b)-b_\mu T_{-\lambda-\partial}(a)
+h_\mu(b,\lambda+\partial)T_{-\lambda-\partial}(a)\label{lfd3}\\
&-h_\lambda(a,\mu+\partial)T_{-\mu-\partial}(b),\nonumber\\
&h_{\lambda+\mu}(a_\lambda b-b_\mu a,\partial)=h_\mu(b,\lambda+\partial)h_\lambda(a,\partial)-h_\lambda(a,\mu+\partial)h_\mu(b,\partial),\label{lfd4}\\
&T_{-\lambda-\mu-\partial}(T_{\mu}(a)-D_\mu(a))+h_\lambda(a,-\lambda-\mu)M(\lambda+\mu,\partial)
-P(\mu,\lambda+\partial)T_{-\lambda-\partial}(a)\label{lfd5}\\
&-a_\lambda M(\mu,\partial)=k_\lambda(a,-\lambda-\mu)M(\lambda+\mu,\partial)-D_\mu(T_{-\lambda-\partial}(a))
-h_\lambda(a,\mu+\partial)M(\mu,\partial),\nonumber\\
&h_\lambda(a,-\lambda-\mu)P(\lambda+\mu,\partial)+h_{\lambda+\mu}(T_{-\lambda-\partial}(a),\partial)-P(\mu,\lambda+\partial)h_\lambda(a,\partial)
=h_{\lambda+\mu}(D_\mu(a),\partial)\label{lfd6}\\
&+k_\lambda(a,-\lambda-\mu)P(\lambda+\mu,\partial)-k_{-\mu-\partial}(T_{-\lambda-\partial}(a),\partial)
-h_\lambda(a,\mu+\partial)P(\mu,\partial),\nonumber\\
&(M(\lambda,\partial)-M(\mu,\partial))_{\lambda+\mu}a+(P(\lambda,-\lambda-\mu)-P(\mu,-\lambda-\mu))D_{\lambda+\mu}(a)=D_\lambda(D_\mu(a))\label{lfd7}\\
&-D_\mu(D_\lambda(a))+k_{-\lambda-\mu-\partial}(a,\lambda+\partial)M(\lambda,\partial)
-k_{-\lambda-\mu-\partial}(a,\mu+\partial)M(\mu,\partial),\nonumber\\
&(P(\lambda,-\lambda-\mu)-P(\mu,-\lambda-\mu))k_{-\lambda-\mu-\partial}(a,\partial)
=k_{-\lambda-\partial}(D_\mu(a),\partial)-k_{-\mu-\partial}(D_\lambda(a),\partial)\label{lfd8}\\
&+k_{-\lambda-\mu-\partial}(a,\lambda+\partial)P(\lambda,\partial)
-k_{-\lambda-\mu-\partial}(a,\mu+\partial)P(\mu,\partial),\nonumber\\
&T_{-\lambda-\mu-\partial}(M(\lambda,\partial)-M(\mu,\partial))+(P(\lambda,-\lambda-\mu)
-P(\mu,-\lambda-\mu))M(\lambda+\mu,\partial)\label{lfd9}\\
&=D_\lambda(M(\mu,\partial))- D_\mu(M(\lambda,\partial))+P(\mu,\lambda+\partial)M(\lambda,\partial)
-P(\lambda,\mu+\partial)M(\mu,\partial),\nonumber\\
&h_{\lambda+\mu}(M(\lambda,\partial)-M(\mu,\partial),\partial)+(P(\lambda,-\lambda-\mu)-P(\mu,-\lambda-\mu)) P(\lambda+\mu,\partial)\label{lfd10}\\
&=k_{-\lambda-\partial}(M(\mu,\partial),\partial)-k_{-\mu-\partial}(M(\lambda,\partial),\partial)+P(\mu,\lambda+\partial)P(\lambda,\partial)
-P(\lambda,\mu+\partial)P(\mu,\partial).\nonumber
\end{align}
We denote the set of all flag datums of the left-symmetric conformal algebra $R$ by $\mathcal{FLC}(R)$.
\end{definition}
\begin{proposition}\label{pro4.2}
Let $R=\mathbb{C}[\partial]V$ be a left-symmetric conformal algebra which is a free $\mathbb{C}[\partial]$-module and $Q=\mathbb{C}[\partial]x$ a free $\mathbb{C}[\partial]$-module of rank 1. Then there is a bijection between the set $\mathfrak{L}(R,Q)$
 of all left-symmetric  conformal extending structures of $R$ by $Q$ and $\mathcal{FLC}(R)$.
\end{proposition}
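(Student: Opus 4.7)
The plan is to construct a bijection between $\mathfrak{L}(R, Q)$ and $\mathcal{FLC}(R)$ by extracting polynomial data from an extending structure. Given $\Omega(R,Q) = (\varphi, \psi, l, r, g_\lambda(\cdot,\cdot), \circ_\lambda)$, I would define a flag datum as follows. Set $D_\lambda(a) := \varphi(x)_\lambda a$ and $T_\lambda(a) := \psi(x)_\lambda a$ for $a \in R$; these are conformal linear since $\varphi(x), \psi(x) \in Cend(R)$. For $a \in V$, introduce polynomials $h_\lambda(a, \partial), k_\lambda(a, \partial) \in \mathbb{C}[\lambda, \partial]$ by $l(a)_\lambda x = h_\lambda(a, \partial) x$ and $r(a)_\lambda x = k_\lambda(a, \partial) x$; the $\mathbb{C}[\partial]$-linearity of $l, r$ together with the $Cend$-module structure $(\partial l(a))_\lambda = -\lambda l(a)_\lambda$ translate into $h_\lambda(\partial a, \partial) = -\lambda h_\lambda(a, \partial)$ (and similarly for $k$), so $h, k$ extend uniquely to left conformal linear maps $R \to \mathbb{C}[\lambda, \partial]$. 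Finally, set $M(\lambda, \partial) := g_\lambda(x, x) \in R[\lambda]$ and define $P(\lambda, \partial) \in \mathbb{C}[\lambda, \partial]$ by $x \circ_\lambda x = P(\lambda, \partial) x$.

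Conversely, given a flag datum $(h_\lambda(\cdot,\partial), k_\lambda(\cdot,\partial), D_\lambda, T_\lambda, M(\lambda,\partial), P(\lambda,\partial))$, I would reconstruct $\Omega(R, Q)$ by the same formulas, extending $\varphi, \psi$ to $\mathbb{C}[\partial]$-linear maps on $Q$ via $\varphi(f(\partial)x)_\nu a = f(-\nu) D_\nu(a)$ and $\psi(f(\partial)x)_\nu a = f(-\nu) T_\nu(a)$, extending $l, r$ to $Cend(Q)$-valued maps via $l(a)_\nu(f(\partial) x) = f(\partial + \nu) h_\nu(a, \partial) x$ and the analogous formula for $r$, and recovering $g_\lambda$ and $\circ_\lambda$ from $M$ and $P$ by conformal bilinearity. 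The two constructions are visibly mutually inverse, and this step simultaneously verifies the translation of module-homomorphism and sesquilinearity conditions in the rank-one setting.

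The crucial step is to check that the ten compatibility conditions LC1--LC10 of Theorem \ref{lth} translate, under this dictionary, to the ten axioms \eqref{lfd1}--\eqref{lfd10} of the flag datum, paired index-by-index LC$i \leftrightarrow$ lfd$i$. Evaluating each LC identity on $a, b \in V$ and on the generator $x \in Q$, and applying the substitution rules listed above, reduces LC$i$ to a polynomial identity that coincides with lfd$i$. For instance, LC1 yields \eqref{lfd1} once one unfolds $\psi(r(b)_{-\lambda-\partial}x)_{-\mu-\partial}a$ as $k_{-\lambda-\mu-\partial}(b, \mu+\partial)\, T_{-\mu-\partial}(a)$ by successive sesquilinearity substitutions, and the remaining nine pairings are handled analogously. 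The main obstacle is not conceptual but bookkeeping: the LC conditions involving $g_\lambda$ and $\circ_\lambda$ (especially LC5--LC10) require tracking several iterated $\partial$'s produced by repeated substitutions $\nu \mapsto -\lambda - \partial$ inside nested expressions, and assembling them into precisely the shape of the lfd equations. Once this is executed carefully, Proposition \ref{pro4.2} follows.
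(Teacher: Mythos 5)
Your proposal is correct and follows essentially the same route as the paper: the paper's proof sets up exactly the same dictionary ($l(a)_\lambda x=h_\lambda(a,\partial)x$, $r(a)_\lambda x=k_\lambda(a,\partial)x$, $\varphi(x)_\lambda a=D_\lambda(a)$, $\psi(x)_\lambda a=T_\lambda(a)$, $x\circ_\lambda x=P(\lambda,\partial)x$, $g_\lambda(x,x)=M(\lambda,\partial)$) and then declares the equivalence of \eqref{LC1}--\eqref{LC10} with \eqref{lfd1}--\eqref{lfd10} to be a straightforward check. Your write-up actually supplies a bit more detail than the paper does (the conformal-linearity bookkeeping for extending the data from the generator $x$ to all of $Q$, and the sample unfolding of \eqref{LC1} into \eqref{lfd1}), but the argument is the same.
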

\begin{proof}
Given a left-symmetric conformal extending structure
$\Omega(R,Q)$
= $(\varphi,\psi,l,r,g_\lambda(\cdot,\cdot), \circ_\lambda )$. Since
$Q=\mathbb{C}[\partial]x$ is a free $\mathbb{C}[\partial]$-module of rank 1, we  set
$$
\begin{aligned}
&l(a)_\lambda x=h_\lambda(a,\partial)x,~~r(a)_\lambda x=k_\lambda(a,\partial)x,~~\varphi(x)_\lambda a=D_\lambda(a),\\
&\psi(x)_\lambda a=T_\lambda(a),~~x\circ_\lambda x=P(\lambda,\partial)x,~~g_\lambda(x,x)=M(\lambda,\partial),
\end{aligned}
$$
where $P(\lambda,\partial)\in\mathbb{C}[\lambda,\partial] $, $M(\lambda,\partial)\in R[\lambda] $,
$h_\lambda(\cdot,\partial):R\rightarrow \mathbb{C}[\lambda,\partial] $ and $k_\lambda(\cdot,\partial):R\rightarrow \mathbb{C}[\lambda,\partial] $ are left conformal linear maps,
$D_\lambda:R\rightarrow R[\lambda] $ and $T_\lambda:R\rightarrow R[\lambda]  $ are conformal linear maps.
It is straightforward to check that the conditions \eqref{LC1}-\eqref{LC10} in Theorem \ref{lth} are equivalent to \eqref{lfd1}-\eqref{lfd10}.

\end{proof}

The left-symmetric conformal algebra corresponding to the flag datum
$(h_\lambda(\cdot,\partial),k_\lambda(\cdot,\partial), D_\lambda,$ $T_\lambda,M(\lambda,\partial),P(\lambda,\partial) )$ of $R$ is the
$\mathbb{C}[\partial]$-module $R\oplus\mathbb{C}[\partial]x$ with the following $\lambda$-products:
\begin{eqnarray}
&&(a+x)_\lambda (b+x)=(a_\lambda b+T_{-\lambda-\partial}(a)+D_\lambda(b)+M(\lambda,\partial))+(h_\lambda(a,\partial)+k_{-\lambda-\partial}(b,\partial)+P(\lambda,\partial))x,
\end{eqnarray}
 for all $a,b\in V $. Denote  this left-symmetric conformal algebra by $LC(R,\mathbb{C}[\partial]x|h_\lambda(\cdot,\partial),k_\lambda(\cdot,\partial),$\\
 $D_\lambda,T_\lambda,M(\lambda,\partial),P(\lambda,\partial))$.

\begin{theorem}\label{th4.3}
Let $R=\mathbb{C}[\partial]V$ be a left-symmetric conformal algebra which is free as a $\mathbb{C}[\partial]$-module and $Q=\mathbb{C}[\partial]x$ be a free $\mathbb{C}[\partial]$-module
of rank 1. Set $E=R\oplus Q$ where the direct sum is the sum of $\mathbb{C}[\partial]$-modules. Then we obtain\\
(1) $CExtd(E,R)\cong \mathcal{H}^2_R(Q,R)\cong \mathcal{FLC}(R)/\equiv$,
 where `` $\equiv$ " is the equivalence relation on the set
$\mathcal{FLC}(R) $ as follows:
$$
(h_\lambda(\cdot,\partial),k_\lambda(\cdot,\partial), D_\lambda,T_\lambda,M(\lambda,\partial),P(\lambda,\partial) )\equiv
(h'_\lambda(\cdot,\partial),k'_\lambda(\cdot,\partial), D'_\lambda,T'_\lambda,M'(\lambda,\partial),P'(\lambda,\partial) )
$$
if and only if
$h_\lambda(\cdot,\partial)=h'_\lambda(\cdot,\partial) $ , $k_\lambda(\cdot,\partial)=k'_\lambda(\cdot,\partial) $ and there exist $\omega\in R$ and
$\beta \in \mathbb{C}\setminus\{0\}$ such that for all $a\in V$:
\begin{align}
&D_\lambda(a)=\beta D'_\lambda(a)+{\omega}_\lambda a-k_{-\lambda-\partial}(a,\partial)\omega,\label{eq4.1}\\
&T_{-\lambda-\partial}(a)=\beta T'_{-\lambda-\partial}(a)+a_\lambda \omega-h_\lambda(a,\partial)\omega,\label{eq4.2}\\
&M(\lambda,\partial)={\omega}_\lambda \omega+\beta^2M'(\lambda,\partial)+\beta T'_{-\lambda-\partial}(\omega)+\beta D'_\lambda(\omega)-P(\lambda,\partial)\omega,\label{eq4.3}\\
&P(\lambda,\partial)=k'_{-\lambda-\partial}(\omega,\partial)+h'_\lambda(\omega,\partial)+\beta P'(\lambda,\partial).\label{eq4.4}
\end{align}
The bijection between $\mathcal{FLC}(R)/\equiv$ and $CExtd(E,R)$ is given by
\begin{equation}
\begin{aligned}
&\overline{(h_\lambda(\cdot,\partial),k_\lambda(\cdot,\partial), D_\lambda,T_\lambda,M(\lambda,\partial),P(\lambda,\partial) )}\rightarrow\\
&LC(R,\mathbb{C}[\partial]x|h_\lambda(\cdot,\partial),k_\lambda(\cdot,\partial), D_\lambda,T_\lambda,M(\lambda,\partial),P(\lambda,\partial)).
\end{aligned}
\end{equation}

\noindent(2) $CExtd'(E,R)\cong \mathcal{H}^2(Q,R)\cong \mathcal{FLC}(R)/\approx$, where `` $ \approx$ " is the equivalence
relation on the set
$\mathcal{FLC}(R) $ as follows:
$$(h_\lambda(\cdot,\partial),k_\lambda(\cdot,\partial), D_\lambda,T_\lambda,M(\lambda,\partial) ,P(\lambda,\partial))\approx
(h'_\lambda(\cdot,\partial),k'_\lambda(\cdot,\partial), D'_\lambda,T'_\lambda,M'(\lambda,\partial) ,P'(\lambda,\partial)) $$
if and only if
$h_\lambda(\cdot,\partial)=h'_\lambda(\cdot,\partial) $ , $k_\lambda(\cdot,\partial)=k'_\lambda(\cdot,\partial) $ and there exists $\omega\in R$
such that \eqref{eq4.1}-\eqref{eq4.4} hold for $\beta=1$. The bijection between
$\mathcal{FLC}(R)/\approx$ and $CExtd'(E,R)$ is given by
\begin{equation}
\begin{aligned}
&\overline{\overline{(h_\lambda(\cdot,\partial),k_\lambda(\cdot,\partial), D_\lambda,T_\lambda,M(\lambda,\partial),P(\lambda,\partial) )}}\rightarrow\\
&LC(R,\mathbb{C}[\partial]x|h_\lambda(\cdot,\partial),k_\lambda(\cdot,\partial), D_\lambda,T_\lambda,M(\lambda,\partial),P(\lambda,\partial)).
\end{aligned}
\end{equation}
\end{theorem}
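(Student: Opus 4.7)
The plan is to bootstrap from the results already proved. By Theorem \ref{th3.12}, we have bijections $\mathcal{H}^2_R(Q,R) \cong CExtd(E,R)$ and $\mathcal{H}^2(Q,R) \cong CExtd'(E,R)$, so the first isomorphism in each part is free. Next, by Proposition \ref{pro4.2}, there is a bijection between $\mathfrak{L}(R,Q)$ and $\mathcal{FLC}(R)$ given by the parametrization
\[
l(a)_\lambda x = h_\lambda(a,\partial)x,\;\; r(a)_\lambda x = k_\lambda(a,\partial)x,\;\; \varphi(x)_\lambda a = D_\lambda(a),\;\; \psi(x)_\lambda a = T_\lambda(a),
\]
together with $x\circ_\lambda x = P(\lambda,\partial)x$ and $g_\lambda(x,x) = M(\lambda,\partial)$. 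Therefore the entire theorem reduces to transporting the relations ``$\equiv$'' and ``$\approx$'' of Definition \ref{3.10} across this bijection and showing that they take the form \eqref{eq4.1}--\eqref{eq4.4}.

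The key observation is the explicit description of the admissible pairs $(u,v)$. Since $Q=\mathbb{C}[\partial]x$ is free of rank $1$, any $\mathbb{C}[\partial]$-module homomorphism $u:Q\to R$ is determined by $\omega := u(x)\in R$; and any $v\in Aut_{\mathbb{C}[\partial]}(Q)$ has the form $v(x)=f(\partial)x$ with $f(\partial)$ a unit in $\mathbb{C}[\partial]$, hence $f(\partial)=\beta\in\mathbb{C}\setminus\{0\}$. The $\approx$-case corresponds to $\beta=1$. Conformal sesquilinearity and $\mathbb{C}[\partial]$-linearity of $u,v$ reduce all six compatibility equations \eqref{eq1}--\eqref{eq6} to their values on the single generator $x$ (and on $(x,x)$), so the translation becomes a finite calculation.

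I would then substitute term-by-term into \eqref{eq1}--\eqref{eq6}. Equations \eqref{eq2} and \eqref{eq4}, evaluated on $x$, yield $\beta h_\lambda(a,\partial)=\beta h'_\lambda(a,\partial)$ and $\beta k_{-\lambda-\partial}(a,\partial)=\beta k'_{-\lambda-\partial}(a,\partial)$, giving $h=h'$ and $k=k'$ after cancelling $\beta\neq 0$. Equation \eqref{eq1}, using $u(l(a)_\lambda x)=h_\lambda(a,\partial)\omega$ and $\psi'(v(x))_{-\lambda-\partial}a=\beta T'_{-\lambda-\partial}(a)$, rearranges to \eqref{eq4.2}; equation \eqref{eq3} rearranges analogously to \eqref{eq4.1}. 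Finally, equations \eqref{eq5} and \eqref{eq6} applied to $(x,x)$ — using $u(x\circ_\lambda x)=P(\lambda,\partial)\omega$, $g'_\lambda(\beta x,\beta x)=\beta^2 M'(\lambda,\partial)$, $\varphi'(\beta x)_\lambda\omega=\beta D'_\lambda(\omega)$, $\psi'(\beta x)_{-\lambda-\partial}\omega=\beta T'_{-\lambda-\partial}(\omega)$, and the analogous identities for $l',r'$ — rearrange to \eqref{eq4.3} and \eqref{eq4.4}, respectively. The bijection onto $LC(R,\mathbb{C}[\partial]x|\cdots)$ displayed in (1) and (2) is then immediate from the definition of the unified product.

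There is no conceptual obstacle: the argument is a direct unpacking of Definition \ref{3.10} in the rank-one setting. The main thing requiring care is bookkeeping — keeping track of $\beta$-factors and of the distinction between applying $\varphi',\psi'$ to $v(x)=\beta x$ (where $\mathbb{C}[\partial]$-linearity of $\varphi',\psi'$ extracts a $\beta$) versus applying $u$ to expressions like $h_\lambda(a,\partial)x\in Q[\lambda]$ (where $\mathbb{C}[\partial]$-linearity of $u$ sends this to $h_\lambda(a,\partial)\omega\in R[\lambda]$). Once the translation is performed, part (2) follows by specializing the argument for part (1) to $\beta=1$.
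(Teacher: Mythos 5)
Your proposal is correct and follows essentially the same route as the paper: the paper's proof simply sets $u(x)=\omega$ and $v(x)=\beta x$ with $\beta\in\mathbb{C}\setminus\{0\}$ and invokes Lemma \ref{lemma31}, Theorem \ref{th3.12} and Proposition \ref{pro4.2}, which is exactly your reduction. Your explicit term-by-term translation of \eqref{eq1}--\eqref{eq6} into \eqref{eq4.1}--\eqref{eq4.4} is accurate and merely spells out what the paper leaves implicit.
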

\begin{proof}
Since $u$: $Q\rightarrow R $ is a $\mathbb{C}[\partial]$-module homomorphism and
$v$ is a $\mathbb{C}[\partial]$-module automorphism of $Q$  in Definition \ref{3.10}, we set $u(x)=\omega $ and $v(x)=\beta x $ where $\omega\in R$ and $\beta\in\mathbb{C}\setminus\{0\}$.
Therefore, we can directly get this theorem by Lemma \ref{lemma31}, Theorem \ref{th3.12} and Proposition \ref{pro4.2}.
\end{proof}

If $(h_\lambda(\cdot,\partial),k_\lambda(\cdot,\partial), D_\lambda,T_\lambda,M(\lambda,\partial),P(\lambda,\partial) )\in \mathcal{FLC}(R)$ with  $ h_\lambda(\cdot,\partial)$, $k_\lambda(\cdot,\partial)$ and $T_\lambda $ trivial, then we denote this flag datum by
$( D_\lambda,M(\lambda,\partial),P(\lambda,\partial)) $. The set of all such flag datums of $R$ is denoted by $\mathcal{DFLC}_1(R)$. In this case, notice that $D_\lambda$ is a conformal derivation of $R$. By Theorem \ref{th4.3},
$( D_\lambda,M(\lambda,\partial),P(\lambda,\partial))\equiv ( D'_\lambda,M'(\lambda,\partial),P'(\lambda,\partial))$ if and only if there exists a
pair $(\beta,\omega)\in \mathbb{C}\setminus\{0\}\times R$ such that for all $a\in V$,
\begin{align}
&D_\lambda(a)=\beta D'_\lambda(a)+{\omega}_\lambda a,\label{eq4.25}\\
&0=a_\lambda \omega,\label{eq4.26}\\
&M(\lambda,\partial)={\omega}_\lambda \omega+\beta^2M'(\lambda,\partial)+\beta D'_\lambda(\omega)-P(\lambda,\partial)\omega,\label{eq4.27}\\
&P(\lambda,\partial)=\beta P'(\lambda,\partial).\label{eq4.28}
\end{align}
Moreover, $( D_\lambda,M(\lambda,\partial),P(\lambda,\partial))\approx ( D'_\lambda,M'(\lambda,\partial),P'(\lambda,\partial))$ if and only if
\eqref{eq4.25}-\eqref{eq4.28} hold with $\beta=1$. Denote the set of all flag datums such as
$( D_\lambda,0,P(\lambda,\partial))$ by $\mathcal{DDFLC}_1(R)$.

If $(h_\lambda(\cdot,\partial),k_\lambda(\cdot,\partial), D_\lambda,T_\lambda,M(\lambda,\partial),P(\lambda,\partial) )\in \mathcal{FLC}(R)$ with $ P(\lambda,\partial)=0$,  $ h_\lambda(\cdot,\partial)$ and $T_\lambda $ trivial,  then we denote this flag datum by
$(k_\lambda(\cdot,\partial), D_\lambda,M(\lambda,\partial)) $. The set of all such flag datums of $R$ in which $ k_\lambda(\cdot,\lambda)\neq0$ is denoted by $\mathcal{DFLC}_2(R)$. In this case, notice that $D$ is a twisted conformal derivation of $R$. By Theorem \ref{th4.3},
$(k_\lambda(\cdot,\partial),D_\lambda,M(\lambda,\partial))\equiv( k'_\lambda(\cdot,\partial),D'_\lambda,M'(\lambda,\partial))$ if and only if there exists a
pair $(\beta,\omega)\in \mathbb{C}\setminus\{0\}\times R$ such that for all $a\in V$,
\begin{align}
&D_\lambda(a)=\beta D'_\lambda(a)+{\omega}_\lambda a-k_{-\lambda-\partial}(a,\partial)\omega,\label{eq4.29}\\
&0=a_\lambda \omega,\label{eq4.30}\\
&M(\lambda,\partial)={\omega}_\lambda \omega+\beta^2M'(\lambda,\partial)+\beta D'_\lambda(\omega),\label{eq4.31}\\\
&0=k'_{-\lambda-\partial}(\omega,\partial).\label{eq4.32}
\end{align}
In addition, $(k_\lambda(\cdot,\partial),D_\lambda,M(\lambda,\partial))\approx ( k'_\lambda(\cdot,\partial),D'_\lambda,M'(\lambda,\partial))$ if and only if
\eqref{eq4.29}-\eqref{eq4.32} hold with $\beta=1$.

\begin{corollary}
Let $R $ be a left-symmetric conformal algebra which is a free $\mathbb{C}[\partial]$-module.
If $h_\lambda(\cdot,\partial) $ is trivial in any flag datum of $R$ and all conformal semi-quasicentroids of $R$ are inner, then
\begin{equation}
CExtd(E,R)\cong H^2_R(Q,R)\cong(DFLC_1(R)/\equiv)\cup(DFLC_2(R)/\equiv)
\end{equation}
and
\begin{equation}
CExtd'(E,R)\cong H^2(Q,R)\cong(DFLC_1(R)/\approx)\cup(DFLC_2(R)/\approx).
\end{equation}
In addition, if there does not exist any non-zero element $b\in R$ such that $a_\lambda b=0$ for all $a\in R$, then
$$
CExtd(E,R)\cong H^2_R(Q,R)\cong(DDFLC_1(R)/\equiv_1)\cup(DFLC_2(R)/\equiv_2),
$$
where $(D_\lambda,0,P(\lambda,\partial))\equiv_1 (D'_\lambda,0,P'(\lambda,\partial)) $ if and only if there exists $\beta\in\mathbb{C}\setminus\{0\} $
such that
$$
D_\lambda (a)=\beta D'_\lambda(a),~~~~P(\lambda,\partial)=\beta P'(\lambda,\partial),
$$
and $(k_\lambda(\cdot,\partial),D_\lambda,M(\lambda,\partial))\equiv_2 (k'_\lambda(\cdot,\partial),D'_\lambda,M'(\lambda,\partial)) $ if and only if
$k_\lambda(\cdot,\partial)=k'_\lambda(\cdot,\partial) $ and there exists $\beta\in\mathbb{C}\setminus\{0\}$ such that
$$
D_\lambda(a)=\beta D'_\lambda(a),~~~~M(\lambda,\partial)=\beta^2M'(\lambda,\partial),
$$
and
$$
CExtd'(E,R)\cong H^2(Q,R)\cong DDFLC_1(R)\cup DFLC_2(R).
$$
\end{corollary}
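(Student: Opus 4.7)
The approach is to specialize Theorem~\ref{th4.3}, which identifies $\mathcal{H}^2_R(Q,R)$ with $\mathcal{FLC}(R)/\equiv$, and use the two hypotheses to reduce each equivalence class to a representative with $T_\lambda=0$ and then separate according to whether $k_\lambda$ vanishes. The hypothesis that $h_\lambda(\cdot,\partial)$ is trivial reduces \eqref{lfd3} to the conformal semi-quasicentroid identity on $T_\lambda$, so the second hypothesis makes $T$ inner: there exists $\omega_0\in R$ with $T_{-\lambda-\partial}(a)=a_\lambda\omega_0$ for all $a\in V$. Taking $v=\Id_Q$, $\beta=1$, $u(x)=\omega_0$ in the equivalence of Theorem~\ref{th4.3}, equation \eqref{eq4.2} gives $T'=0$, while \eqref{eq4.1}, \eqref{eq4.3}, \eqref{eq4.4} record the induced changes in $D,M,P$.

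On the transformed flag datum $(0,k,D',0,M',P')$, identity \eqref{lfd6} with $h=T'=0$ collapses to $k_\lambda(a,-\lambda-\mu)P'(\lambda+\mu,\partial)=0$, forcing a dichotomy: either $k=0$, placing the representative in $\mathcal{DFLC}_1(R)$, or $k\neq 0$ and $P'=0$, placing it in $\mathcal{DFLC}_2(R)$. Since $\equiv$ preserves both $h$ and $k$ by Theorem~\ref{th4.3}, these subsets are disjoint and $\equiv$-stable, yielding $\mathcal{FLC}(R)/\equiv\;\cong\;(\mathcal{DFLC}_1(R)/\equiv)\cup(\mathcal{DFLC}_2(R)/\equiv)$. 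Restricting \eqref{eq4.1}-\eqref{eq4.4} to each component reduces them to \eqref{eq4.25}-\eqref{eq4.28} on $\mathcal{DFLC}_1(R)$ and \eqref{eq4.29}-\eqref{eq4.32} on $\mathcal{DFLC}_2(R)$; setting $\beta=1$ throughout produces the analogous $\mathcal{H}^2(Q,R)\cong CExtd'(E,R)$ statement.

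For the additional claim, the no-right-annihilator hypothesis combined with \eqref{eq4.26} (resp.~\eqref{eq4.30}), both reading $a_\lambda\omega=0$ for all $a\in V$, forces $\omega=0$ in every equivalence. On $\mathcal{DFLC}_1(R)$, identity \eqref{lfd5} with $h=k=T=0$ simplifies to $a_\lambda M(\mu,\partial)=0$, and the hypothesis then gives $M=0$, moving every equivalence class into $\mathcal{DDFLC}_1(R)$. Substituting $\omega=0$ into \eqref{eq4.25}-\eqref{eq4.28} produces exactly $\equiv_1$, with \eqref{eq4.27} becoming trivial, and similarly on $\mathcal{DFLC}_2(R)$ one recovers $\equiv_2$ with \eqref{eq4.32} automatic. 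Imposing $\beta=1$ collapses both equivalences to equality, yielding the displayed identification of $CExtd'(E,R)$.

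The main obstacle is the dichotomy step: the transformed $P'$ is prescribed by \eqref{eq4.4} as $P-k_{-\lambda-\partial}(\omega_0,\partial)$, and one must verify that this vanishes when $k\neq 0$. Consistency follows from \eqref{lfd6} applied to the original flag datum (together with \eqref{lfd2}), and keeping track of these interdependencies between the flag datum axioms and the equivalence transformation is the technical heart of the argument.
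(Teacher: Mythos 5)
Your proposal is correct and follows essentially the same route as the paper: use the triviality of $h_\lambda(\cdot,\partial)$ in \eqref{lfd3} to see that $T_\lambda$ is a conformal semi-quasicentroid, use innerness together with the equivalence of Theorem \ref{th4.3} (with $\beta=1$, $u(x)=\omega_0$) to normalize $T_\lambda=0$, then read off the dichotomy $k=0$ versus $k\neq0,\ P=0$ from \eqref{lfd5}--\eqref{lfd6}, and finally use the no-right-annihilator hypothesis to force $M=0$ and $\omega=0$ in \eqref{eq4.26}/\eqref{eq4.30}, which collapses the equivalences to $\equiv_1$ and $\equiv_2$. The only difference is presentational: your closing worry about verifying $P'=0$ is already resolved by your own paragraph two, since the transformed tuple is again a flag datum and \eqref{lfd6} applies to it directly, which is exactly what the paper does.
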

\begin{proof}
By \eqref{lfd3}, $T_\lambda$ is a conformal semi-quasicentroid.  Since all conformal semi-quasicentroids of $R$ are inner, there exists some $b\in R$ such
that $T_{-\lambda-\partial}(a)=a_\lambda b$ for all $a\in R$. By Theorem \ref{th4.3}, we can make $T_{\lambda}=0$. Thus, one gets
$k_{\lambda}(a,-\lambda-\mu)M(\lambda+\mu,\partial)+a_\lambda M(\mu,\partial)=0$
and $k_{\lambda}(a,-\lambda-\mu)P(\lambda+\mu,\partial)=0$ from \eqref{lfd5} and \eqref{lfd6}. Therefore we obtain that there are two cases:
(1) $k_\lambda(\cdot,\partial)=0$; (2) $k_\lambda(\cdot,\partial)\neq0, P(\lambda,\partial)=0 $.
Then the first conclusion can be directly obtained by Theorem \ref{th4.3}.

Let us consider when $R$ also does not have any non-zero element $b$ such that $a_\lambda b=0$ for all $a\in R$. If $k_\lambda(\cdot,\partial)=0$, then
$a_\lambda M(\mu,\partial)=0 $ for all $a\in R$ by \eqref{lfd5}. Thus, one has $M(\mu,\partial)=0 $. Moreover, we also can obtain $\omega=0$ in \eqref{eq4.25}-\eqref{eq4.28} and \eqref{eq4.29}-\eqref{eq4.32} by \eqref{eq4.26} and \eqref{eq4.30}. Then we get the second conclusion by
Theorem \ref{th4.3}.
\end{proof}
\delete{\begin{remark}
It should be pointed out that this theorem is useful for computing some unified products of some special left-symmetric conformal algebras and $Q=\mathbb{C}[\partial]x$ in the next section.
\end{remark}}

\delete{In what follows, we prepare to use this theory characterize the extending structures of left-symmetric conformal algebra $R=\mathbb{C}[\partial]a$ defined by $a_\lambda a=0$ by $Q = \mathbb{C}[\partial]x$ with $M(\lambda,\partial)=0$. In the sequel, let us make some notations for convenience, and we will not be confused by these notations because $R$ is free and of rank 1 as a $\mathbb{C}[\partial]$-module. We denote
\begin{align*}
h_\lambda(a,\partial)=h(\lambda,\partial),~~~~~k_\lambda(a,\partial)=k(\lambda,\partial),~~~~~D_\lambda(a)=D(\lambda,\partial)a, ~~~~~ T_\lambda(a)=T(\lambda,\partial)a,
\end{align*}
where $ h(\lambda,\partial),k(\lambda,\partial),D(\lambda,\partial),T(\lambda,\partial), \in\mathbb{C}[\partial,\lambda]$.
\begin{proposition}
Let $R = \mathbb{C}[\partial]a$ be the left-symmetric conformal algebra with $a_\lambda a = 0$
and $Q = \mathbb{C}[\partial]x$. Then $\mathcal{H}^2_R(Q,R)$ can be described by the following kinds of
flag datums:\\
(a)~~$(0,0,D_\lambda,0,0)$, moreover, $(0,0,D_\lambda,0,0)$
is equivalent to $(0,0,D'_\lambda,0,0)$ if and only if there exists $\beta \in \mathbb{C }\backslash\{0\}$ such
that $D_\lambda = \beta D'_\lambda$;\\
(b)~~$(0,0,D_\lambda,1,1)$, where $T_\lambda(a)=a$ and $D_\lambda(a)=D(\lambda)a$, $D(\lambda)a\in\mathbb{C}[\lambda]$;\\
(c)~~$(0,0,D_\lambda,0,1)$ where $D_\lambda(a)=D(\lambda)a$ $D(\lambda)\in\mathbb{C}[\lambda]$;\\
(d)~~$(0,0,0,0,\lambda+\partial+c) $  where $\alpha,\beta,c\in \mathbb{C}$;\\
(e)~~$(0,0,D_\lambda,0,\lambda+\partial+c) $ where $D_\lambda(a)=(\partial+\alpha\lambda+\beta)a$, $\alpha,\beta,c\in \mathbb{C}$ ;\\
(f)~~$(0,0,D_\lambda,T_\lambda,\lambda+\partial+c) $ where $D_\lambda(a)=(\partial+\alpha\lambda+\beta)a$ $T_\lambda(a)=ca$, $\alpha,\beta,c\in \mathbb{C}$ ;\\
(g)~~$(0,0,D_\lambda,T_\lambda,\lambda+\partial+c) $  where $D_\lambda(a)=(\partial+\alpha\lambda+\beta)a$ $T_\lambda(a)=(-\lambda+c)a$, $\alpha,\beta,c\in \mathbb{C}$ ; \\
(h)~~$ (h_\lambda(\cdot,\partial),0,0,0,0)$ where $h_\lambda(x,\partial)=h(\lambda,\partial)x $, $h(\lambda,\partial)\in \mathbb{C }[\lambda]$;\\
(i)~~$ (h_\lambda(\cdot,\partial),0,0,T_\lambda,1)$ where $h_\lambda(x,\partial)=h(\lambda,\partial)x$, $h(\lambda,\partial)x\in \mathbb{C }$ and $T_\lambda a=a$;\\
(j)~~$ (h_\lambda(\cdot,\partial),0,0,T_\lambda,\lambda+\partial+c)$ where $h_\lambda(x,\partial)=h(\lambda)\in \mathbb{C }[\lambda]$ ;\\
\end{proposition}

\begin{proof}
By Theorem \ref{th4.3}, for characterizing the extending structures of $R$ by $Q$ up to equivalence, we only need to describe the set $\mathcal{FLC}(R)$ up to equivalence.\par
By assumption $a_\lambda a=0$, we get $h(\lambda,\mu+\partial)h(\mu,\partial)=h(\mu,\lambda+\partial)h(\lambda,\partial)$ by \eqref{lfd4}. Then we obtain
the degree of $\partial $ is smaller than 1 by comparing the degree of $\lambda$. Thus, taking $h(\lambda,\partial)=h(\lambda)$ into \eqref{lfd2}, we get
\begin{equation}
(k(\mu,-\lambda-\mu)-h(\mu))k(-\lambda-\mu-\partial,\partial)=-k(-\lambda-\mu-\partial,\mu+\partial)h(\mu).\label{eqkh}
\end{equation}
By comparing the degree of $\lambda$, we get the fact that the degree of $\partial$ is smaller than 1. Thus, \eqref{eqkh} become $k(\mu)k(-\lambda-\mu-\partial)=0 $ and then we get $k(\lambda,\partial)=0$. \eqref{lfd1} implies $h(\mu)D(\lambda+\mu,\partial)=0 $ and so we get at least one of $h(\mu)$ and $D(\lambda+\mu,\partial)$ to be zero.\par
To begin with, we assume $h(\mu)=0$ which means $l(a)_\lambda x$ and $r(a)_\lambda x$ are trivial and $Q=\mathbb{C}[\partial]x$ is a left-symmetric conformal algebra by \eqref{lfd10}. Thus, we know there are three cases $(1)P(\lambda,\partial)=0$,  $(2)P(\lambda,\partial)=1$ and $(3)P(\lambda,\partial)=\lambda+\partial+c$ by Proposition \ref{proplsca} and Theorem \ref{th4.3}. \par
We consider $P(\lambda,\partial)=0$ firstly. \eqref{lfd7} will reduce to $D(\mu,\lambda+\partial)D(\lambda,\partial)=D(\lambda,\mu+\partial)D(\mu,\partial)$. Therefore we get the degree of $\partial$ in $D(\lambda,\partial)$ is equal to 0 by comparing the degree of $\lambda$. In what follows, \eqref{lfd5} will reduce to
$$
T(\mu,-\lambda-\mu)T(-\lambda-\mu-\partial,\partial)-D(\mu)T(-\lambda-\mu-\partial,\partial)=-D(\mu)T(-\lambda-\mu-\partial,\mu+\partial),
$$
and then we can get the degree of $\partial$ in $T(\lambda,\partial)$ is equal to 0 by comparing the degree of $\lambda$. Thus, we get $T(\lambda,\partial)=0$ in that $T(\mu)T(-\lambda-\mu-\partial)=0$. Thus, this is $Case (a)$.\par

Next, we consider $P(\lambda,\partial)=1$. By \eqref{lfd7}, we also get the degree of $\partial$ in $D(\lambda,\partial)$ is equal to 0 and then by \eqref{lfd5} we get
\begin{equation*}
T(\mu,-\lambda-\mu)T(-\lambda-\mu-\partial,\partial)-D(\mu)T(-\lambda-\mu-\partial,\partial)-T(-\lambda-\partial,\partial)
=-D(\mu)T(-\lambda-\mu-\partial,\mu+\partial).
\end{equation*}
It is easy to get that the degree of $\partial$ in $T(\lambda,\partial)$ is equal to 0. Then we can get $T(\lambda,\partial)=1$ or $0$ from $T(\mu)T(-\lambda-\mu-\partial)=T(-\lambda-\partial)$. Therefore one has $Case(b)$ when $T(\lambda,\partial)=1$ and $Case(c)$ when $T(\lambda,\partial)=0$. \par
Finally, we consider $P(\lambda,\partial)=\lambda+\partial+c$. We can get
\begin{equation}
(\lambda-\mu)D(\lambda+\mu,\partial)=D(\mu,\lambda+\partial)D(\lambda,\partial)-D(\lambda,\mu+\partial)D(\mu,\partial),\label{dd}
\end{equation}
via \eqref{lfd7} and then one has that the degree of $\partial$ in $D(\lambda,\partial)$ is smaller than 2 by comparing the degree of $\lambda$. Then we can assume $D(\lambda,\partial)=d_1(\lambda)\partial+d_0(\lambda)$ and take it into \eqref{dd}. Hence
\begin{equation*}
(\lambda-\mu)(d_1(\lambda+\mu)\partial+d_0(\lambda+\mu))=(\lambda-\mu)\partial d_1(\mu)d_1(\lambda)+\lambda d_1(\mu)d_0(\lambda)-\mu d_1(\lambda)d_0(\mu),
\end{equation*}
and then we get $d_1(\lambda)=1\text{ or }0$ since $d_1(\lambda+\mu)=d_1(\mu)d_1(\lambda) $. If $d_1(\lambda)=0$, one has $d_0(\lambda)=0$ and then $D(\lambda,\partial)=0$. Hence \eqref{lfd5} simplify to
\begin{equation*}
T(\mu,-\lambda-\mu)T(-\lambda-\mu-\partial,\partial)=(\mu+\lambda+\partial+c)T(-\lambda-\partial,\partial),
\end{equation*}
and then we have $T(\lambda,\partial)=0$ by comparing the degree of $\partial$. Thus, this is $Case(d)$.
If $d_1(\lambda)=1$, one has $(\lambda-\mu)d_0(\lambda+\mu)=\lambda d_0(\lambda)-\mu d_0(\mu) $. Then we can deduce that the degree of $\lambda$ in $d_0(\lambda)$ is smaller than 2, so we write $d_0(\lambda)=\alpha\lambda+\beta$ and $D(\lambda,\partial)=\partial+\alpha\lambda+\beta$ where $\alpha,\beta\in\mathbb{C}$. In what follows, \eqref{lfd5} can be written as
\begin{align}\label{tdq}
&[T(\mu,-\lambda-\mu)-(-\lambda-\mu+\alpha\mu+\beta)]T(-\lambda-\mu-\partial,\partial)-(\mu+\lambda+\partial+c)T(-\lambda-\partial,\partial)\\
=&-(\partial+\alpha\mu+\beta)T(-\lambda-\mu-\partial,\mu+\partial).\nonumber
\end{align}
One has the degree of $\partial$ in $T(\lambda,\partial)$ is smaller than 2 by comparing the degree of $\lambda$ in \eqref{tdq}.
We have
\begin{align*}
&[(-\lambda-\mu) t_1(\mu)+t_0(\mu)+\lambda-(\alpha-1)\mu-\beta](t_1(-\lambda-\mu-\partial)\partial+t_0(-\lambda-\mu-\partial))\\
&-(\mu+\lambda+\partial+c)(t_1(-\lambda-\partial)\partial+t_0(-\lambda-\partial))=-(\partial+\alpha\mu+\beta)(\mu+\partial)t_1(-\lambda-\mu-\partial)\\
&-(\partial+\alpha\mu+\beta)t_0(-\lambda-\mu-\partial),
\end{align*}
 by assuming $T(\lambda,\partial)=t_1(\lambda)\partial+t_0(\lambda)$ and taking it into \eqref{tdq}.
 Then we consider the degree of $\lambda$ in $t_1(\lambda)$ and $t_0(\lambda)$. Let $deg_\lambda(t_1(\lambda))=k_1$ and $deg_\lambda(t_0(\lambda))=k_2 $. On the one hand, if $t_1(\lambda)=0$ then one has $ deg_\mu(t_0(\mu))<2$ by comparing the degree of $\mu$.
 Assuming $ t_0(\lambda)=t_2\lambda+t_3$, by comparing the coefficient of $\mu^2$ one obtains $t_2=0$ or $t_2=-1$. If $t_2=0$, $t_3=0$ or $t_3=c$. If $t_2=-1$, $t_3=c$. Similarly, if $t_0(\lambda)=0$ then one has the degree of $t_1(\mu)$ is smaller than 2 by comparing the degree of $\mu$.
 We should note that $deg_\mu(t_1(\mu))>0$ is impossible by comparing the coefficient of $\mu^3\partial$. Thus we get $ T(\lambda,\partial)=0$ with a few simple calculations.
 On the other hand, If $k_1>k_2$ then we get $k_1=0$ by comparing the coefficients of $\lambda^{k_1+1}\partial $ and if $k_2>k_1$, we  obtain $t_1(\mu)=0$ by comparing the coefficients of $\lambda^{k_2+1} $.
 If $k_1=k_2$ then we obtain $k_1=k_2=0$ by comparing the coefficient of $\mu^{2k_1+1}\partial$. In what follows, we obtain $t_1(\mu)=0$ by comparing the coefficient of $\lambda\partial$. Hence one has $Case(e)$ which $T(\lambda,\partial)=0$, $Case(f)$ which $T(\lambda,\partial)=c$ and $Case(g)$ which $T(\lambda,\partial)=-\lambda+c$.\par
 In the end, we suppose $D(\lambda,\partial)=0$ then we obtain $deg_\lambda(T(\lambda,\partial))=deg_\lambda(h(\lambda))$ by comparing the degree of $\lambda$ in \eqref{lfd3}. When $p(\lambda,\partial)=0$ one has $T(\lambda,\partial)=0$ by \eqref{lfd5}. This is $Case(h)$. When $p(\lambda,\partial)=1$ one has the degree of $\partial$ in $T(\lambda,\partial)$ is equal to zero by comparing the degree of $\lambda$ in \eqref{lfd5} and then by comparing the degree of $\mu$ one obtains $ T(\lambda,\partial)=1$ or $T(\lambda,\partial)=0$. If $T(\lambda,\partial)=0$, by \eqref{lfd6} one can get $T(\lambda,\partial)=h(\lambda)=0$ and then this is the special case in $Case(c)$ when $D(\lambda)=0$. If $T(\lambda,\partial)=1 $, one has $h(\lambda,\partial)\in\mathbb{C}$ since $h(\lambda)=-h(\lambda+\mu)$ in \eqref{lfd6} and then this is the $Case(i)$. When $p(\lambda,\partial)=\lambda+\partial+c$ one gets $T(\lambda,\partial)=0$ by comparing the degree of $\partial$ in \eqref{lfd5} and then $h(\lambda,\partial)=0$ from \eqref{lfd6}. We know this is $Case(d)$.

\end{proof}}

\section{Special cases of unified products and examples}\label{sec5}
In this section, we will introduce some important and interesting products of left-symmetric conformal algebras such as
crossed products and bicrossed products which are all special cases of unified products.
\subsection{Crossed products of left-symmetric conformal algebras }\label{cp}\

Let $R$ be a left-symmetric conformal algebra and $Q$ be a $\mathbb{C}[\partial]$-module. Let $\Omega(R,Q)= (\varphi,\psi,l,r,g_\lambda(\cdot,\cdot),\circ_\lambda)$ be an extending datum of $R$ by $Q$ where $l$ and $r$ are trivial. We denote this extending datum simply by $\Omega(R,Q)= (\varphi,\psi, g_\lambda(\cdot,\cdot), \circ_\lambda)$.
Then $\Omega(R,Q)= (\varphi,\psi, g_\lambda(\cdot,\cdot), \circ_\lambda)$ is a left-symmetric conformal extending structure of $R$ by $Q$ if and only if  $(Q,\circ_\lambda)$ is a left-symmetric conformal algebra and the following conditions are satisfied for all $a$, $b\in R$ and $x$, $y$, $z\in Q$:
\begin{align}
&(\varphi(x)_\lambda a-\psi(x)_{-\mu-\partial}a)_{\lambda+\mu}b=\varphi(x)_\lambda(a_\mu b)-a_\mu(\varphi(x)_\lambda b),\tag{C1}\label{C1}\\
&\psi(x)_{-\lambda-\mu-\partial}(a_\lambda b-b_\mu a)=a_\lambda(\psi(x)_{-\mu-\partial}b)-b_\mu(\psi(x)_{-\lambda-\partial}a),\tag{C2}\label{C2}\\
&\psi(y)_{-\lambda-\mu-\partial}(\psi(x)_{-\lambda-\partial}a-\varphi(x)_\mu a )-a_\lambda(g_\mu(x,y))
-\psi(x\circ_\mu y)_{-\lambda-\partial}a=-\varphi(x)_\mu(\psi(y)_{-\lambda-\partial}a),\tag{C3}\label{C3}\\
&(g_\lambda(x,y)-g_\mu(y,x))_{\lambda+\mu}a+\varphi(x\circ_\lambda y-y\circ_\mu x)_{\lambda+\mu}a=\varphi(x)_\lambda(\varphi(y)_\mu a)
-\varphi(y)_\mu(\varphi(x)_\lambda a),\tag{C4}\label{C4}\\
&\psi(z)_{-\lambda-\mu-\partial}g_\lambda(x,y)+g_{\lambda+\mu}(x\circ_\lambda y,z)-\varphi(x)_\lambda g_\mu(y,z)-g_\lambda(x,y\circ_\mu z)\tag{C5}\label{C5}\\
&=\psi(z)_{-\lambda-\mu-\partial}g_\mu(y,x)+g_{\lambda+\mu}(y\circ_\mu x,z)-\varphi(y)_\mu g_\lambda(x,z)-g_\mu(y,x\circ_\lambda z).\nonumber
\end{align}

We denote the associated unified product $R\natural Q$
 by $R\natural^g_{\varphi,\psi} Q $ and call it the {\bf crossed product} of $R$ and $Q$. The $\lambda$-products on $R\natural^g_{\varphi,\psi} Q $ are given
 by for all $a,b\in R$ and $x,y\in Q$:
 $$
 (a+x)_\lambda(b+y)=(a_\lambda b+\varphi (x)_\lambda b+\psi(y)_{-\lambda-\partial}a+g_\lambda(x,y))+x\circ_\lambda y.
 $$
It is obvious that $R$ is an ideal of $R\natural^g_{\varphi,\psi} Q $.
\begin{proposition}\label{crossed-product}
Let $R$ and $Q$ be two left-symmetric conformal algebras. Set $E=R\oplus Q$ where the direct sum is the sum of $\mathbb{C}[\partial]$-modules. If $E$ has a left-symmetric conformal algebra structure such that $R$ is an ideal of $E$,
then $E$ is isomorphic to a crossed product $R\natural^g_{\varphi,\psi} Q $ of $R$ and $Q$.
\end{proposition}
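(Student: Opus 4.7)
The plan is to derive this directly from Theorem \ref{th35}, showing that the extra hypothesis ``$R$ is an ideal'' forces the two module-action components $l,r$ of the unified product extending datum to vanish, thereby collapsing the unified product to a crossed product.

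First, I would invoke Theorem \ref{th35}: since $R$ is in particular a subalgebra of $E$, there exists a left-symmetric conformal extending structure $\Omega(R,Q)=(\varphi,\psi,l,r,g_\lambda(\cdot,\cdot),\circ_\lambda)$ such that $E\cong R\natural Q$ as left-symmetric conformal algebras, via an isomorphism that stabilizes $R$ and co-stabilizes $Q$. Recall from the construction that, with $p:E\to R$ denoting the canonical projection attached to the $\mathbb{C}[\partial]$-module splitting $E=R\oplus Q$, the maps $l$ and $r$ are given explicitly by
\begin{align*}
l(a)_\lambda x &= a_\lambda x-p(a_\lambda x),\\
r(a)_\lambda x &= x_{-\lambda-\partial}a-p(x_{-\lambda-\partial}a),
\end{align*}
for $a\in R$ and $x\in Q$.

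Next, I would exploit the ideal hypothesis. Since $R$ is an ideal of $E$, for any $a\in R$ and $x\in Q\subset E$ we have $a_\lambda x\in R[\lambda]$ and $x_{-\lambda-\partial}a\in R[\lambda]$. Applying $p$, which is the identity on $R$, gives $p(a_\lambda x)=a_\lambda x$ and $p(x_{-\lambda-\partial}a)=x_{-\lambda-\partial}a$, so $l(a)_\lambda x=0$ and $r(a)_\lambda x=0$ for all such $a,x$. Hence $l$ and $r$ are trivial, and the extending datum collapses to a tuple of the form $(\varphi,\psi,g_\lambda(\cdot,\cdot),\circ_\lambda)$. By definition of the crossed product, the unified product $R\natural Q$ is then precisely the crossed product $R\natural^{g}_{\varphi,\psi}Q$.

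Finally, I would observe that the induced $\circ_\lambda$ on $Q$ defined by $x\circ_\lambda y=x_\lambda y-p(x_\lambda y)$ equips $Q$ with a left-symmetric conformal algebra structure (automatic from Theorem \ref{lth} and the triviality of $l,r$, or directly because $R$ is an ideal forces $Q\cong E/R$ as left-symmetric conformal algebras). This identifies $Q$ as the left-symmetric conformal algebra appearing in the crossed product, completing the isomorphism $E\cong R\natural^{g}_{\varphi,\psi}Q$. I do not anticipate any real obstacle here: once the ideal hypothesis is translated into the vanishing of $l$ and $r$, everything else is an immediate specialization of Theorem \ref{th35}.
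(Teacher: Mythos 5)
Your proposal is correct and follows exactly the route the paper intends: the paper's own proof is the one-line remark that the claim ``is straightforward by Theorem \ref{th35}'', and your argument simply spells out the relevant detail, namely that in the extending datum produced by Theorem \ref{th35} the maps $l$ and $r$ are defined as the $Q$-components of $a_\lambda x$ and $x_{-\lambda-\partial}a$, which vanish precisely because the ideal hypothesis places these products inside $R[\lambda]$ where the projection $p$ acts as the identity. Nothing further is needed.
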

\begin{proof}
It is straightforward by Theorem \ref{th35}.
\end{proof}

Therefore, crossed products of left-symmetric conformal algebras are useful for investigating the $\mathbb{C}[\partial]$-split extension problem given in the introduction. By Proposition \ref{crossed-product}, any $E$ in the $\mathbb{C}[\partial]$-split extension problem is isomorphic to a crossed product $R\natural^g_{\varphi,\psi} Q $. Notice that all crossed products of $R$ and $Q$ satisfy the conditions in the $\mathbb{C}[\partial]$-split extension problem. Therefore, all left-symmetric conformal algebra structures on $E$ in the $\mathbb{C}[\partial]$-split extension problem can be described by all crossed products of $R$ and $Q$ up to isomorphism which stabilizes $R$ and co-stabilizes $Q$. By Lemma \ref{lemma31} and Theorem \ref{th3.12}, the $\mathbb{C}[\partial]$-split extension problem can be answered by $\mathcal{H}^2(Q,R) \cong \mathfrak{L}(R,Q)/\approx $ where in these left-symmetric conformal extending structures $l,r=0$ and $\circ_\lambda$ is the $\lambda$-product on $Q$, which is simply denoted by $\mathcal{HC}^2(Q,R)$.

In what follows, we consider the case when $R=\mathbb{C}[\partial]V$ is a left-symmetric conformal algebra which is free as a $\mathbb{C}[\partial]$-module and $Q=\mathbb{C}[\partial]x$ is a left-symmetric conformal algebra which is free of rank one as a $\mathbb{C}[\partial]$-module. By Theorem \ref{th4.3}, $\mathcal{HC}^2(Q,R)$ can be characterized by flag datums of $R$ with $h_\lambda(\cdot,\partial)=k_\lambda(\cdot,\partial)=0$ and a given $P(\lambda,\partial)$. Notice that for a crossed product of $R$ and $Q=\mathbb{C}[\partial]x$, $T_\lambda$ is a conformal semi-quasicentroid of $R$ in any flag datum of $R$.

\begin{proposition}\label{cross}
Let $R=\mathbb{C}[\partial]V$ be a left-symmetric conformal algebra which is free as a $\mathbb{C}[\partial]$-module and $Q=\mathbb{C}[\partial]x$ be a left-symmetric conformal algebra which is free of rank one as a $\mathbb{C}[\partial]$-module. Suppose that there does not exist any non-zero element $b$ such that $a_\lambda b=0$ for all $a\in R$.
Then $\mathcal{HC}^2(Q,R) \cong \mathcal{FLC}(R)/\approx $, where $ \approx$ is the equivalence relation on $\mathcal{FLC}(R) $ given by:
$$
(0,0,D_\lambda,T_\lambda,M(\lambda,\partial),P(\lambda,\partial))\approx(0,0,D'_\lambda,T'_\lambda,M'(\lambda,\partial),P(\lambda,\partial))
$$
if and only if there exists $\omega\in R$ such that $T_{\lambda}(a)= T'_{\lambda}(a)+a_{-\lambda-\partial} \omega$ and $D_\lambda(a)=D'_{\lambda}(a)+\omega_\lambda a$ for all $a\in V$. Moreover, if all conformal derivations of $R$ are zero, then $\mathcal{HC}^2_R(Q,R) \cong \mathcal{FLC}(R)/\approx  $, where $ \approx$ is the equivalence relation on $\mathcal{FLC}(R) $ given by:
$$
(0,0,D_\lambda,T_\lambda,M(\lambda,\partial),P(\lambda,\partial))\approx  (0,0,D'_\lambda,T'_\lambda,M'(\lambda,\partial),P(\lambda,\partial))
$$
if and only if $T_{\lambda}-T'_{\lambda}\in CQSInn(R)$.

\delete{(2) If the $\lambda$-products in $Q$ are non-trivial, then $\mathcal{HC}^2(Q,R)\cong \mathcal{FLC}(R)/\approx $, where $ \approx$ is the equivalence relation on $\mathcal{FLC}(R) $ given by:
$$
(0,0,D_\lambda,T_\lambda,M(\lambda,\partial),P(\lambda,\partial))\approx(0,0,D'_\lambda,T'_\lambda,M'(\lambda,\partial),P(\lambda,\partial))
$$
if and only if $T_{-\lambda-\partial}(a)= T'_{-\lambda-\partial}(a)+a_\lambda \omega$ and $D_\lambda(a)= D'_{\lambda}(a)+\omega_\lambda a$ for all $a\in V$. Moreover, if all conformal derivations of $R$ are zero, then $\mathcal{HC}^2(Q,R)\cong \mathcal{FLC}(R)/\approx $, where $ \equiv$ is the equivalence relation on $\mathcal{FLC}(R) $ given by:
$$
(0,0,D_\lambda,T_\lambda,M(\lambda,\partial),0)\approx(0,0,D'_\lambda,T'_\lambda,M'(\lambda,\partial),0)
$$
if and only if $T_{-\lambda-\partial}-T'_{-\lambda-\partial}\in CQSInn(R)$.}
\end{proposition}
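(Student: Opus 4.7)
The plan is to specialize Theorem \ref{th4.3}(2) to the crossed product setting and then use the hypotheses on $R$ to simplify the resulting equivalence. Since a crossed product is characterized by $l=r=0$, the bijection of Proposition \ref{pro4.2} forces the associated flag datum to have $h_\lambda(\cdot,\partial)=k_\lambda(\cdot,\partial)=0$, and because the $\lambda$-product on $Q$ is prescribed, $P(\lambda,\partial)$ is fixed once and for all. Consequently $\mathcal{HC}^2(Q,R)$ is in bijection with flag datums of the form $(0,0,D_\lambda,T_\lambda,M(\lambda,\partial),P(\lambda,\partial))$ modulo the restriction of $\approx$ from Theorem \ref{th4.3}(2). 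With $h=h'=k=k'=0$ and $\beta=1$, conditions \eqref{eq4.1}--\eqref{eq4.4} collapse to $D_\lambda(a)=D'_\lambda(a)+\omega_\lambda a$, $T_\lambda(a)=T'_\lambda(a)+a_{-\lambda-\partial}\omega$ (after the substitution $\lambda\mapsto -\lambda-\partial$ in \eqref{eq4.2}), the relation \eqref{eq4.3} on $M$, and the automatic identity $P=P'$.

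The first isomorphism is then reduced to showing that \eqref{eq4.3} is forced by the other conditions, using the hypothesis on $R$. The key input is flag datum equation \eqref{lfd5} with $h=k=0$, which rearranges to
\[
a_\lambda M(\mu,\partial)=T_{-\lambda-\mu-\partial}(T_\mu(a)-D_\mu(a))+D_\mu(T_{-\lambda-\partial}(a)).
\]
Let $M^\ast$ denote the expression on the right-hand side of \eqref{eq4.3}. By Theorem \ref{th4.3}, the $\omega$-transform of $(0,0,D',T',M',P)$ produces a flag datum $(0,0,D,T,M^\ast,P)\in\mathcal{FLC}(R)$ whenever \eqref{eq4.1}--\eqref{eq4.2} hold. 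Applying the displayed identity to both $(0,0,D,T,M,P)$ and $(0,0,D,T,M^\ast,P)$ gives $a_\lambda(M(\mu,\partial)-M^\ast(\mu,\partial))=0$ for all $a\in R$; comparing coefficients of $\mu$, the hypothesis on $R$ then forces $M=M^\ast$, i.e.\ \eqref{eq4.3}.

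For the ``moreover'' part I would handle the two directions separately. The direction $(\Rightarrow)$ is immediate: if $\omega$ witnesses the equivalence, then $T_\lambda(a)-T'_\lambda(a)=a_{-\lambda-\partial}\omega=T^\omega_\lambda(a)$, so $T-T'\in CSQInn(R)$. For $(\Leftarrow)$, given $\omega\in R$ with $T_\lambda-T'_\lambda=T^\omega_\lambda$, define $D^\ast_\lambda(a):=D'_\lambda(a)+\omega_\lambda a$. By Theorem \ref{th4.3}, the $\omega$-transform of $(0,0,D',T',M',P)$ is some $(0,0,D^\ast,T,M^\ast,P)\in\mathcal{FLC}(R)$ (the $T$-entry matches by the choice of $\omega$). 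Both $(0,0,D,T,M,P)$ and $(0,0,D^\ast,T,M^\ast,P)$ satisfy \eqref{lfd1} with $h=k=0$; subtracting the two instances yields
\[
((D_\lambda-D^\ast_\lambda)(a))_{\lambda+\mu}b=(D_\lambda-D^\ast_\lambda)(a_\mu b)-a_\mu((D_\lambda-D^\ast_\lambda)(b)),
\]
which is exactly the condition for $D-D^\ast$ to be a conformal derivation of $R$. By the zero-derivation hypothesis $D=D^\ast$, so \eqref{eq4.1} holds, and \eqref{eq4.3} follows from the first part.

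The main obstacle I anticipate is the bookkeeping in the second paragraph: one must verify carefully that the specialization of \eqref{lfd5} really isolates $a_\lambda M(\mu,\partial)$ entirely in terms of $D$ and $T$, and that the scalar hypothesis on $R$ propagates coefficient-by-coefficient to polynomials in $R[\mu]$. Once that rigidity on $M$ is in place, the derivation computation in the third paragraph is a two-line subtraction, and invoking the hypothesis on conformal derivations closes the argument.
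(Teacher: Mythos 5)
Your proposal is correct and follows essentially the same route as the paper: both reduce via Theorem \ref{th4.3} to showing that the $M$-relation \eqref{eq4.3} is forced by \eqref{lfd5} together with the hypothesis that no nonzero $b$ satisfies $a_\lambda b=0$ for all $a$, and both obtain \eqref{eq4.1} in the ``moreover'' part by exhibiting the difference of the $D$'s (corrected by $\omega_\lambda$) as a conformal derivation via \eqref{lfd1}; your ``subtract two valid flag data'' phrasing is only a more structural packaging of the paper's explicit substitution. One small slip: your displayed specialization of \eqref{lfd5} omits the term $-P(\mu,\lambda+\partial)T_{-\lambda-\partial}(a)$, but since that term involves only $T$ and $P$, which are shared by the two flag data being compared, it cancels in the subtraction and the argument is unaffected.
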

\begin{proof}
By Theorem \ref{th4.3}, for $\mathcal{HC}^2(Q,R)$, we only need to show that if there exists $\omega\in R$ such that $T_{\lambda}(a)= T'_{\lambda}(a)+a_{-\lambda-\partial} \omega$ and $D_\lambda(a)=D'_{\lambda}(a)+\omega_\lambda a$ for all $a\in V$, then $(0,0, D_\lambda,T_\lambda,M(\lambda,\partial),$ $P(\lambda,\partial))\approx (0,0,D'_\lambda,T'_\lambda, M'(\lambda,\partial),P(\lambda,\partial)) $. \delete{By Proposition \ref{proplsca}, we get that $P(\lambda,\partial)=0$, $P(\lambda,\partial)=1$ or $P(\lambda,\partial)=\lambda+\partial+c$ for some $c\in \mathbb{C}$.}
Taking $T_{\lambda}(a)= T'_{\lambda}(a)+a_{-\lambda-\partial} \omega$ and $D_\lambda(a)=D'_{\lambda}(a)+\omega_\lambda a$ for all $a\in V$ into \eqref{lfd5}, we get
\begin{align*}
&T'_{-\lambda-\mu-\partial}(T'_{-\lambda-\partial}(a))-T'_{-\lambda-\mu-\partial}(D'_\mu(a))+D'_\mu(T'_{-\lambda-\partial}(a))
+(T'_{-\lambda-\partial}(a))_{\lambda+\mu}\omega\\
&+T'_{-\lambda-\mu-\partial}(a_\lambda\omega)-(D'_\mu(a))_{\lambda+\mu}\omega-T'_{-\lambda-\mu-\partial}(\omega_\mu a)+\omega_\mu(T'_{-\lambda-\partial}(a))+D'_\mu(a_\lambda\omega)\\
&+(a_\lambda \omega)_{\lambda+\mu}\omega-(\omega_\mu a)_{\lambda+\mu}\omega+\omega_\mu(a_\lambda\omega)=a_\lambda(M(\mu,\partial))+P(\mu,\lambda+\partial)T'_{-\lambda-\partial}(a)+P(\mu,\lambda+\partial)a_\lambda \omega.
\end{align*}
Then by the left-symmetry identity and $T'_{\lambda}$ is a conformal semi-quasicentroid, we get
$$
a_\lambda(M(\mu,\partial))=a_\lambda(\omega_\mu\omega)+a_\lambda(M'(\mu,\partial))+ a_\lambda(D'_\mu(\omega))+ a_\lambda(T'_{-\mu-\partial}(\omega))-a_\lambda(P(\mu,\partial)\omega).
$$
Since there does not exist any non-zero element $b$ such that $a_\lambda b=0$ for all $a\in R$, one has $M(\mu,\partial)=\omega_\mu\omega+M'(\mu,\partial)+D'_\mu(\omega)+T'_{-\mu-\partial}(\omega)-P(\mu,\partial)\omega$. Then we get
$$
(0,0,D_\lambda,T_\lambda,M(\lambda,\partial),P(\lambda,\partial))\approx (0,0, D'_\lambda,T'_\lambda,M'(\lambda,\partial),P(\lambda,\partial)) .
$$

Suppose that all conformal derivations of $R$ are zero. By Theorem \ref{th4.3} and the discussion above, we only need to show that if $T_{\mu}(a)- T'_{\mu}(a)= a_{-\mu-\partial}\omega$ for all $a\in V$ and some $\omega\in R$, then $D_\lambda(a)=D'_{\lambda}(a)+\omega_\lambda a$ for all $a\in V$.
Taking $T_{\mu}(a)- T'_{\mu}(a)= a_{-\mu-\partial}\omega$ for all $a\in V$ into \eqref{lfd1}, we get
\begin{align*} D_\lambda(a)_{\lambda+\mu}b-T'_{\lambda }(a)_{\lambda+\mu}b-(a_\mu\omega)_{\lambda+\mu}b=D_\lambda(a_\mu b)-a_\mu D_\lambda (b).
\end{align*}
Then by the left-symmetry identity, we get
\begin{align*}
((D_\lambda -\omega_\lambda) a)_{\lambda+\mu}b-T'_\lambda (a)_{\lambda+\mu}b=(D_\lambda-\omega_\lambda)(a_\mu b)-a_\mu((D_\lambda -\omega_\lambda)b).
\end{align*}
Notice that
\begin{eqnarray*}
(D'_\lambda(a)-T'_\lambda(a))_{\lambda+\mu}b=D'_\lambda(a_\mu b)-a_\mu(D'_\lambda(b)).
\end{eqnarray*}
Then we have
\begin{eqnarray*}
(D'_\lambda-D_\lambda+\omega_\lambda)(a_\mu b)=((D'_\lambda-D_\lambda+\omega_\lambda) a)_{\lambda+\mu}b+a_\mu ((D'_\lambda-D_\lambda+\omega_\lambda) b).
\end{eqnarray*}
Therefore, $D'_\lambda-D_\lambda+\omega_\lambda$ is a conformal derivation of $R$. Since all conformal derivations of $R$ is zero, we get $D'_\lambda=D_\lambda-\omega_\lambda$. Then the proof is completed.

\end{proof}
\begin{remark}
By Proposition \ref{proplsca}, for $P(\lambda,\partial)$ in Proposition \ref{cross}, there are three probabilities, i.e. $P(\lambda,\partial)=0$, $P(\lambda,\partial)=c_1$ or $P(\lambda,\partial)=\partial+\lambda+c_2$ for some $c_1\in \mathbb{C}\backslash\{0\}$ and $c_2\in \mathbb{C}$.
\end{remark}
Finally, we present an example to compute $\mathcal{HC}^2(Q, R)$.
\begin{example}
Let $R=\mathbb{C}[\partial]L\oplus\mathbb{C}[\partial]W$ be a left-symmetric conformal algebra with the $\lambda$-products as follows:
\begin{equation}
L_\lambda L=0,\,\,\,\,\, L_\lambda W=W_\lambda L=L, \,\,\,\,\, W_\lambda W=W
\end{equation}
and $Q=\mathbb{C}[\partial]x $ be a left-symmetric conformal algebra with the trivial $\lambda$-products. 
Assume $D_\lambda L=D_1(\lambda,\partial)L+D_2(\lambda,\partial)W$,  $D_\lambda W=d_1(\lambda,\partial)L+d_2(\lambda,\partial)W$, $T_\lambda L=T_1(\lambda,\partial)L+T_2(\lambda,\partial)W$ and $T_\lambda W=t_1(\lambda,\partial)L+t_2(\lambda,\partial)W$, where $D_i(\lambda,\partial)$, $d_i(\lambda,\partial)$, $T_i(\lambda,\partial)$, and $t_i(\lambda,\partial)\in \mathbb{C}[\lambda,\partial]$ for $i=1$, $2$. Then by \eqref{lfd3} we get
\begin{align}
&\label{ee1}T_2(-\lambda-\mu-\partial,\lambda+\partial)L=T_2(-\lambda-\mu-\partial,\mu+\partial)L,\\
&t_2(-\lambda-\mu-\partial,\lambda+\partial)L=T_1(-\lambda-\mu-\partial,\mu+\partial)L+T_2(-\lambda-\mu-\partial,\mu+\partial)W, \label{e1}\\
&t_1(-\lambda-\mu-\partial,\lambda+\partial)L+t_2(-\lambda-\mu-\partial,\lambda+\partial)W=t_1(-\lambda-\mu-\partial,\mu+\partial)L \label{e2}\\
&+t_2(-\lambda-\mu-\partial,\mu+\partial)W.\nonumber
\end{align}
We get $T_2(-\lambda-\mu-\partial,\mu+\partial)=0$ by comparing the coefficient of $W$ in \eqref{e1} and the degrees of $\partial$ in $t_1(\lambda,\partial)$ and $t_2(\lambda,\partial)$ are equal to $0$ by comparing the coefficients of $L$ and $W$ in \eqref{e2} respectively. Therefore we set $t_1(\lambda,\partial)=t_1(\lambda)$ and $t_2(\lambda,\partial)=t_2(\lambda)$ for some $t_1(\lambda)$ and $t_2(\lambda)\in \mathbb{C}[\lambda]$. Then by comparing the coefficient of $L$ in (\ref{e1}), we get $T_1(\lambda,\mu+\partial)=T_1(\lambda)=t_2(\lambda)$, where $T_1(\lambda)\in \mathbb{C}[\lambda]$.

By \eqref{lfd1}, we get
\begin{align}
&D_2(\lambda,-\lambda-\mu)L=-D_2(\lambda,\mu+\partial)L,\label{D0}  \\
&D_1(\lambda,-\lambda-\mu)L+D_2(\lambda,-\lambda-\mu)W-T_1(\lambda)L=D_1(\lambda,\partial)L+D_2(\lambda,\partial)W-d_2(\lambda,\mu+\partial)L,\label{e3}\\
&d_1(\lambda,-\lambda-\mu)L+d_2(\lambda,-\lambda-\mu)W-t_1(\lambda)L-t_2(\lambda)W\label{e4}\\
&=d_1(\lambda,\partial)L+d_2(\lambda,\partial)W-d_1(\lambda,\mu+\partial)L-d_2(\lambda,\mu+\partial)W,\nonumber\\
&d_2(\lambda,-\lambda-\mu)L-t_2(\lambda)L=D_1(\lambda,\partial)L+D_2(\lambda,\partial)W-D_1(\lambda,\mu+\partial)L-D_2(\lambda,\mu+\partial)W.\label{e5}\
\end{align}
\eqref{D0} implies $D_2(\lambda,\partial)=0$. It follows by comparing the degree of $\mu\partial$ in the coefficient of $L$ in \eqref{e3} and \eqref{e4} that  the degrees of $\partial$ in $d_2(\lambda,\partial)$, $D_1(\lambda,\partial)$ and $d_1(\lambda,\partial)$ are smaller than 2.  Assume $d_2(\lambda,\partial)=h_1(\lambda)\partial +h_0(\lambda)$ where $h_0(\lambda)$, $h_1(\lambda)\in \mathbb{C}[\lambda]$ and take it into \eqref{e4}. Then we get $d_2(\lambda,-\lambda)=t_2(\lambda)$. Similarly, we can get
$d_1(\lambda,-\lambda)=t_1(\lambda)$. Assume $D_1(\lambda,\partial)=k_1(\lambda)\partial+k_0(\lambda)$ where $k_0(\lambda)$, $k_1(\lambda)\in \mathbb{C}[\lambda]$ and take it into \eqref{e5}. It follows that $h_1(\lambda)=k_1(\lambda)$. Assume $M(\lambda,\partial)=q_1(\lambda,\partial)L+q_2(\lambda,\partial)W$, where $q_1(\lambda,\partial)$, $q_1(\lambda,\partial)\in \mathbb{C}[\lambda,\partial]$.
By  \eqref{lfd7}, we get
\begin{align}
&q_2(\lambda,-\lambda-\mu)-q_2(\mu,-\lambda-\mu)=D_1(\mu,\lambda+\partial)D_1(\lambda,\partial)-D_1(\lambda,\mu+\partial)D_1(\mu,\partial),  \label{e6}\\
&q_1(\lambda,-\lambda-\mu)-q_1(\mu,-\lambda-\mu)=d_1(\mu,\lambda+\partial)D_1(\lambda,\partial)+d_2(\mu,\lambda+\partial)d_1(\lambda,\partial)\label{e7}\\
&-d_1(\lambda,\mu+\partial)D_1(\mu,\partial)-d_2(\lambda,\mu+\partial)d_1(\mu,\partial).\nonumber
\end{align}
Since
$$
D_1(\mu,\lambda+\partial)D_1(\lambda,\partial)-D_1(\lambda,\mu+\partial)D_1(\mu,\partial)
=h_1(\lambda)h_1(\mu)(\lambda-\mu)\partial+h_1(\mu)k_0(\lambda)\lambda-h_1(\lambda)k_0(\mu)\mu,
$$
then the degree of $\partial$ in $D_1(\lambda,\partial)$ is equal to 0 by comparing the degree of $\partial$ in \eqref{e6}.
Therefore $h_1(\lambda)=0$ and $q_2(\lambda,-\lambda-\mu)-q_2(\mu,-\lambda-\mu)=0$. Then the degree of $\lambda$ in $q_2(\lambda,\partial)$ is equal to $0$.
On the other hand, by \eqref{lfd5}, we get
\begin{align}
&T_1(\mu)T_1(-\lambda-\mu-\partial)-D_1(\mu)T_1(-\lambda-\mu-\partial)-q_2(\mu,\lambda+\partial)=-T_1(-\lambda-\mu-\partial)D_1(\mu),\label{e8}\\
&t_1(\mu)T_1(-\lambda-\mu-\partial)+t_2(\mu)t_1(-\lambda-\mu-\partial)-d_1(\mu,-\lambda-\mu)T_1(-\lambda-\mu-\partial)\label{e9}\\
&-d_2(\mu)t_1(-\lambda-\mu-\partial)-q_1(\mu,\lambda+\partial)=-t_1(-\lambda-\mu-\partial)D_1(\mu)-t_2(-\lambda-\mu-\partial)d_1(\mu,\partial).\nonumber
\end{align}
By comparing the degree of $\mu$ in \eqref{e8}, we get $ t_2(\lambda,\partial)=d_2(\lambda,\partial)=T_1(\lambda,\partial)=h_0(\lambda)\in \mathbb{C}$ and denote it by $h$. Then $q_2(\lambda,\partial)=h^2$. Hence we obtain $q_1(\mu,\lambda+\partial)=d_1(-\lambda-\mu-\partial,\lambda+\mu+\partial)k_0(\mu)+hd_1(\mu,\lambda+\partial)$
and \eqref{lfd5} naturally holds. Therefore, we have
\begin{align*}
&D_\lambda L=k_0(\lambda)L,  \,\,\,\,\,\,D_\lambda W=d_1(\lambda,\partial)L+hW,\\
&T_\lambda L=hL,                     \,\,\,\,\,\,T_\lambda W=d_1(\lambda,-\lambda)L+hW.
\end{align*}
Assume $d_1(\lambda,\partial)=p_1(\lambda)\partial+p_0(\lambda)$, where $p_0(\lambda)$, $p_1(\lambda)\in \mathbb{C}[\lambda]$. The flag datum $(D_\lambda,T_\lambda,M(\lambda,\partial),$ $P(\lambda,\partial))$ is determined by $k_0(\lambda)$, $p_1(\lambda)$, $p_0(\lambda)$ and $h$. Therefore, we denote this flag datum by $(k_0(\lambda),p_1(\lambda),p_0(\lambda),h)$.
Assume $\omega=f(\partial)L+g(\partial)W$ in Theorem \ref{th4.3}. Then by Theorem \ref{th4.3}, $(k_0(\lambda),p_1(\lambda),p_0(\lambda),h)\approx (k'_0(\lambda),p'_1(\lambda),p'_0(\lambda),h')$ if and only if there exist $f(\partial)$ and $g(\partial)\in \mathbb{C}[\partial]$ such that
\begin{align*}
&p_1(\lambda)=p'_1(\lambda), \\
&p_0(\lambda)=p'_0(\lambda)+f(-\lambda),\\
&h=h'+g(-\lambda),\\
&k_0(\lambda)=k'_0(\lambda)+g(-\lambda),\\
&h=h'+g(\lambda+\partial).
\end{align*}
Therefore, $g(\partial)=g\in \mathbb{C}$.
 Let $f(\partial)=p_0(-\partial)$ and $g(\partial)=h$. Then we have $(k_0(\lambda),p_1(\lambda),p_0(\lambda),h)\approx
 (k(\lambda)=k_0(\lambda)-h,p_1(\lambda),0,0)$. Notice that $(k(\lambda),p_1(\lambda),0,0)\approx (k'(\lambda),p'_1(\lambda),0,0) $ if and only if $k(\lambda)=k'(\lambda)$ and $p_1(\lambda)=p'_1(\lambda)$.
 Hence $\mathcal{HC}^2(Q,R)$ can be described by all flag datums of the form  $(k(\lambda),p_1(\lambda),0,0)$, where $k(\lambda)$ and $p_1(\lambda)\in \mathbb{C}[\lambda]$.
\delete{
In the end,
\begin{align*}
&D_\lambda a=k_0(\lambda)a,  \,\,\,\,\, D_\lambda b=d_1(\lambda,\partial)a+h b,\\
&T_\lambda a=h a, \,\,\,\,\,  T_\lambda b=d_1(\lambda,-\lambda)a+h b,\\
&M(\lambda,\partial)=q_1(\lambda,\partial)a+h^2 b.
\end{align*
If $d_1(\lambda,\partial)=0$ then $q_1(\lambda,\partial)=0$ . By Theorem \ref{th4.3}, $(0,0,D_\lambda,T_\lambda,M(\lambda,\partial),0)\equiv (0,0,D'_\lambda,0,0,0) $ if and only if
there exists $\beta\in\mathbb{C}\setminus\{0\}$ such that $D'_\lambda a=\frac{1}{\beta}(k_0(\lambda)-h)a $. If $d_1(\lambda,\partial)\neq0$ then}
by Theorem \ref{th4.3} we can get $(0,0,D_\lambda,T_\lambda,M(\lambda,\partial),0)\equiv (0,0,D'_\lambda,T'_\lambda,M'(\lambda,\partial),0) $  if and only if there exists $\omega=h b$ and $\beta\in \mathbb{C}\setminus\{0\}$ such that the following equalities are satisfied:
\begin{align*}
&\beta D_\lambda'a=(k_0(\lambda)-h)a,  \,\,\,\,\,\,\beta D'_\lambda b=d_1(\lambda,\partial)a,\\
&\beta T_\lambda'a=0,                     \,\,\,\,\,\,\beta T'_\lambda b=d_1(\lambda,-\lambda)a,\\
&\beta^2M'(\lambda,\partial)=(k_0(\lambda)-h)d_1(-\lambda-\partial,\lambda+\partial)a,
\end{align*}
where  $k_0\in\mathbb{C}[\lambda]$ and the degree of $\partial$ in $d_1(\lambda,\partial)$ is smaller than 1. Obviously any two kinds of flag datums which the degrees of $\lambda$ in $k_0(\lambda)$ or $d_1(\lambda)$  are distinct are not equivalent to each other by Theorem \ref{th4.3}.
Hence $\mathcal{HC}2_R(Q,R)$ can be described by the infinite kinds of flag datums above under the corresponding equivalences.}
\delete{
by \eqref{C5} we get
\begin{align}
&d_2(q_1(\lambda,-\lambda-\mu) -q_1(\mu,-\lambda-\mu))=q_1(\mu,\lambda+\partial)D_1(\lambda)-q_1(\lambda,\mu+\partial)D_1(\mu)\\
&+d_2^2(d_1(\lambda,\partial) -d_1(\mu,\partial))\nonumber
\end{align}}
\end{example}

\subsection{Bicrossed Products of Left-symmetric Conformal Algebras }\label{bp}\

Let $\Omega(R,Q)= (\varphi,\psi,l,r,g_\lambda(\cdot,\cdot), \circ_\lambda)$ be an extending datum of left-symmetric
conformal algebra $R$ by a $\mathbb{C}[\partial]$-module $Q$ where $g_\lambda(\cdot,\cdot) $ is  trivial.
Denote this extending datum simply by $(\varphi,\psi,l,r,\circ_\lambda)$.
Then $\Omega(R,Q)= (\varphi,\psi,l,r,\circ_\lambda)$ is a left-symmetric conformal extending structure of $R$ by $Q$ if and only if  $(Q, \circ_\lambda)$ is a left-symmetric conformal algebra and the following conditions are satisfied:\\
(1) $R$ is a $Q$-bimodule under $\varphi$, $\psi$ : $Q\rightarrow Cend(R)$.\\
(2) $Q$ is an $R$-bimodule under $l$, $r$ : $R\rightarrow Cend(Q) $.\\
(3) \eqref{LC1}, \eqref{LC3}, \eqref{LC6} and \eqref{LC8} hold.

 The associated unified product $R\natural Q$ denoted by
 $R\bowtie^{\varphi,\psi}_{l,r} Q$ is called the \text{\bf bicrossed product} of $R$ and $Q$. The $\lambda$-products on $R\bowtie^{\varphi,\psi}_{l,r} Q$
are given by for all $a,b\in R$ and $x,y \in Q$ as follows.
$$
(a+x)_\lambda(b+y)
=(a_\lambda b+\varphi (x)_\lambda b+\psi(y)_{-\lambda-\partial}a)+(x\circ_\lambda y+l(a)_\lambda y+r(b)_{-\lambda-\partial} x).
$$
Notice that $R$ and $Q$ are both subalgebras of $R\bowtie^{\varphi,\psi}_{l,r} Q$.

\begin{proposition}\label{prop5.4}
Let $R$ and $Q$ be two left-symmetric conformal algebras. Set $E=R\oplus Q$ where the direct sum is the sum of $\mathbb{C}[\partial]$-modules.
If $E$ is a left-symmetric conformal algebra such that $R$ and $Q$ are two subalgebras of $E$. Then $E$ is isomorphic
to a bicrossed product $R\bowtie^{\varphi,\psi}_{l,r} Q$ of left-symmetric conformal algebras $R$ and $Q$.
\end{proposition}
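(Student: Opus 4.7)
The plan is to apply Theorem \ref{th35} directly, and then verify that the extra hypothesis that $Q$ is a subalgebra forces the cocycle piece $g_\lambda(\cdot,\cdot)$ of the resulting extending datum to vanish, so that the unified product collapses to a bicrossed product.

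First I would invoke Theorem \ref{th35}: since $R$ is a subalgebra of $E$ and $E = R\oplus Q$ as $\mathbb{C}[\partial]$-modules, the canonical projection $p\colon E\to R$ yields a left-symmetric conformal extending structure
\[
\Omega(R,Q)=(\varphi,\psi,l,r,g_\lambda(\cdot,\cdot),\circ_\lambda)
\]
with $\varphi(x)_\lambda a = p(x_\lambda a)$, $\psi(x)_\lambda a = p(a_{-\lambda-\partial}x)$, $l(a)_\lambda x = a_\lambda x - p(a_\lambda x)$, $r(a)_\lambda x = x_{-\lambda-\partial}a - p(x_{-\lambda-\partial}a)$, $g_\lambda(x,y)=p(x_\lambda y)$ and $x\circ_\lambda y = x_\lambda y - p(x_\lambda y)$, such that $E\cong R\natural_{\Omega(R,Q)}Q$ as left-symmetric conformal algebras via an isomorphism stabilizing $R$ and co-stabilizing $Q$.

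The key additional observation is the effect of assuming $Q$ is a subalgebra of $E$. For any $x,y\in Q$, the product $x_\lambda y$ in $E$ lies in $Q[\lambda]$, and since $p$ is the projection onto $R$ along $Q$, we have $p(x_\lambda y)=0$. This gives $g_\lambda(x,y)=0$ for all $x,y\in Q$, and simultaneously $x\circ_\lambda y = x_\lambda y$, which is exactly the original left-symmetric conformal algebra structure on $Q$. Hence the extending datum reduces to $(\varphi,\psi,l,r,\circ_\lambda)$ with $\circ_\lambda$ being the $\lambda$-product of $Q$.

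Finally, I would verify that this is a bicrossed product datum: with $g=0$ and $(Q,\circ_\lambda)$ a left-symmetric conformal algebra, conditions \eqref{LC2} and \eqref{LC4} say that $(Q,l,r)$ is an $R$-bimodule, conditions \eqref{LC5} and \eqref{LC7} (with $g=0$) say that $(R,\varphi,\psi)$ is a $Q$-bimodule, while \eqref{LC1}, \eqref{LC3}, \eqref{LC6}, \eqref{LC8} are precisely the matched-pair compatibilities for a bicrossed product; the remaining \eqref{LC9}, \eqref{LC10} with $g=0$ are automatic from the left-symmetry of $Q$. All of these hold because $E$ is a left-symmetric conformal algebra and Theorem \ref{lth} supplies the equivalence between $E\cong R\natural Q$ being a left-symmetric conformal algebra and \eqref{LC1}--\eqref{LC10}. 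Thus $R\natural_{\Omega(R,Q)}Q = R\bowtie^{\varphi,\psi}_{l,r}Q$, and $E\cong R\bowtie^{\varphi,\psi}_{l,r}Q$ as desired. There is essentially no obstacle here; the proof is a direct corollary of Theorems \ref{lth} and \ref{th35}, with the only substantive remark being the vanishing of $g$ forced by $Q\subseteq E$ being a subalgebra.
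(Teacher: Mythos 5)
Your proof is correct and follows exactly the route the paper intends: the paper's own proof is just ``It is straightforward by Theorem \ref{th35}'', and your argument supplies precisely the intended details, namely that $Q$ being a subalgebra forces $p(x_\lambda y)=0$, hence $g_\lambda(\cdot,\cdot)=0$ and $\circ_\lambda$ is the original product on $Q$, so the unified product from Theorem \ref{th35} is a bicrossed product.
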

\begin{proof}
It is straightforward by Theorem \ref{th35}
\end{proof}

In fact, the bicrossed product of left-symmetric conformal algebras is useful for investigating the problem that describe and classify all left-symmetric conformal algebra structures on
$E=R\oplus Q$ such that $R$ and $Q$ are two subalgebras of $E$ up to isomorphism which stabilizes $R$ and co-stabilizes $Q$. By Proposition  \ref{prop5.4} and the general theory developed in Section 3,
this problem can be solved by  $\mathcal{H}^2(Q,R)=\mathfrak{L}(R,Q)/\approx $ where in these left-symmetric conformal extending structures $g_\lambda(\cdot,\cdot)=0$ and $\circ_\lambda$ is the $\lambda$-product on $Q$. For convenience we denote it by $\mathcal{HB}^2(Q,R)$. In particular, when $Q=\mathbb{C}[\partial]x$ and $R$ is free as a $\mathbb{C}[\partial]$-module, $\mathcal{HB}^2(Q,R) $ can be characterized by flag datums of $R$ where $M(\lambda,\partial)=0$ by Theorem \ref{th4.3}.

In the end, we give an example to compute $\mathcal{HB}^2(Q,R)$.
\begin{example}
Let $R=\mathbb{C}[\partial]L$ be a left-symmetric conformal algebra with the $\lambda$-product defined by $L_\lambda L=(\lambda+\partial+c)L$
where $c\in\mathbb{C}$ and $Q=\mathbb{C}[\partial]W$ be a left-symmetric conformal algebra with the $\lambda $-product defined by $W_\lambda W=(\lambda+\partial+\xi)W$ where $\xi\in\mathbb{C}$.

Denote by $h_\lambda(L,\partial)=h(\lambda,\partial)$, $k_\lambda(L,\partial)=k(\lambda,\partial)$, $D_\lambda(L)=D(\lambda,\partial)L$ and $T_\lambda(L)=T(\lambda,\partial)L$ where $h(\lambda,\partial), k(\lambda,\partial), D(\lambda,\partial)$ and $T(\lambda,\partial)\in \mathbb{C}[\lambda,\partial]$. Notice that  the flay datum $(h_\lambda(\cdot, \partial), k_\lambda(\cdot, \partial),$ $D_\lambda, T_\lambda, 0, P(\lambda,\partial))$ in this case is determined by $h(\lambda,\partial)$, $k(\lambda,\partial)$, $D(\lambda,\partial)$ and $T(\lambda,\partial)$, where $P(\lambda,\partial)=\lambda+\partial+\xi$. We denote it simply by $(h(\lambda,\partial), k(\lambda,\partial), D(\lambda,\partial), T(\lambda,\partial), \lambda+\partial+\xi)$.

Since $L_\lambda L=(\lambda+\partial+c)L$, we get that the degree of $\partial$ in $h(\lambda,\partial)$ is smaller than 2 by comparing the degree of $\lambda$ in \eqref{lfd4}. Assume $h(\lambda,\partial)=h_1(\lambda)\partial+h_2(\lambda)$ where $h_1(\lambda)$ and $h_2(\lambda)\in \mathbb{C}[\lambda]$. Take it into $(\lambda-\mu)h(\lambda+\mu,\partial)=h(\mu,\lambda+\partial)h(\lambda,\partial)-h(\lambda,\mu+\partial)h(\mu,\partial) $. Since $h_1(\lambda+\mu)=h_1(\lambda)h_1(\mu)$, we get that $h_1(\lambda)$ is equal to $1$ or $0$. If $h_1(\lambda)=1$, we obtain that $h_0(\lambda)=\alpha\lambda+\beta$ where $\alpha,\beta\in\mathbb{C}$ and $h(\lambda,\partial)=\partial+\alpha\lambda+\beta$. If $h_1(\lambda)=0$, we obtain that $h(\lambda,\partial)=0$. Similarly, by \eqref{lfd7}, one gets that $D(\lambda,\partial)$ is equal to $0$ or $\partial+\gamma\lambda+\delta$ where $\gamma,\delta\in\mathbb{C}$.

\textbf{Case 1:}  $h(\lambda,\partial)=0$.\\
 Then \eqref{lfd2} becomes $k(\mu,-\lambda-\mu)k(-\lambda-\mu-\partial,\partial)=(\lambda+\mu+\partial+c)k(-\lambda-\partial,\partial) $. Therefore one has $k(\lambda,\partial)=0$ by comparing the degree of $\partial$. If $D(\lambda,\partial)=0$, we get $T(\lambda,\partial)=0$ by \eqref{lfd1}.
On the other hand, if $D(\lambda,\partial)=\partial+\gamma\lambda+\delta$, then \eqref{lfd1} becomes
\begin{align*}
-\lambda^2+((\gamma-2)\lambda+\delta)(\mu+\partial)+c(\gamma-1)\lambda+c\delta=(\lambda+\mu+\partial+c)T(\lambda,-\lambda-\mu).
\end{align*}
The degree of $\partial$ in $T(\lambda,\partial)$ is equal to $0$ by comparing the degree of $\mu\partial$ and then the degree of $\lambda$ in $T(\lambda,\partial)$ is equal to $1$ by comparing the degree of $\lambda$. Therefore $T(\lambda,\partial)=(\gamma-2)\lambda+\delta$ by comparing the coefficient of $\partial$. Finally we get $\gamma=1$ and $\delta=c$ by comparing the coefficients of $\lambda^2$ and $\lambda$. It follows from \eqref{lfd5} that $\xi=c$. If $D_\lambda\neq 0$, then $(0,0,\partial+\lambda+c,-\lambda+c,\lambda+\partial+c)\approx (0,0,0,0,\lambda+\partial+c)$ by setting $\omega=L$ in  Theorem \ref{th4.3}. Therefore, in this case, there is only one equivalence class of flag datums, i.e. $(0,0,0,0,\lambda+\partial+c)$.

\textbf{Case 2:} $h(\lambda,\partial)=\partial+\alpha\lambda+\beta$.\\
Then we obtain the degree of $\partial$ in $k(\lambda,\partial)$ is smaller than $2$ by comparing the degree of $\lambda$ in \eqref{lfd2}. Assume $k(\lambda,\partial)=k_1(\lambda)\partial+k_0(\lambda)$ where $k_1(\lambda)$, $k_0(\lambda)\in \mathbb{C}[\lambda]$ and take it into \eqref{lfd2}. Then one has
\begin{eqnarray}\label{e22}
&&-k_1(\mu)k_1(-\lambda-\mu-\partial)\lambda\partial-k_1(\mu)k_1(-\lambda-\mu-\partial)\mu\partial+k_0(\mu)k_1(-\lambda-\mu-\partial)\partial\nonumber\\
&&-k_1(\mu)k_0(-\lambda-\mu-\partial)\lambda-k_1(\mu)k_0(-\lambda-\mu-\partial)\mu+k_0(\mu)k_0(-\lambda-\mu-\partial)\nonumber\\
&&+(\lambda+\mu+\partial)(k_1(-\lambda-\mu-\partial)\partial+k_0(-\lambda-\mu-\partial))+(\partial+\alpha\mu+\beta)k_1(-\lambda-\mu-\partial)\mu\nonumber\\
&=&(\lambda+\mu+\partial+c)(k_1(-\lambda-\partial)\partial+k_0(-\lambda-\partial)).
\end{eqnarray}
If $k_0(\lambda)=0$ and $k_1(\lambda)\neq 0$, we get that $k_1(\lambda)=0$ by comparing the coefficient of $\lambda^{m+1}\partial$ in \eqref{e22} where $m$ is the degree of $\lambda$ in $k_1(\lambda)$. Therefore, if $k_0(\lambda)=0$, we have $k_1(\lambda)=0$. If $k_1(\mu)=0$, one obtains that the degree of $\mu$ in $k_0(\mu)$ is smaller than $2$ by comparing the degree of $\mu$. Assume $k_0(\mu)=e\mu+f$ where $e,f\in\mathbb{C}$. We get that $e$ is equal to $-1$ or $0$ by comparing the coefficient of $\mu^2$. If $e=0$, one gets that $f$ is equal to $0$ or $c$. If $e=-1$, one has that $ f$ is equal to $c$.
Assume that both $k_1(\lambda)$ and $k_0(\lambda)$ are not equal to $0$ and the highest degrees of $\lambda$ in $k_1(\lambda)$ and $k_0(\lambda)$ are equal to $m$  and $n$ respectively. If $m\geq n$, one has $k_1(\mu)=0$ by comparing the coefficient of $\lambda^{m+1}\partial$. If $m<n $, one has $k_1(\mu)=0$ by comparing the coefficient of $\lambda^{n+1}$. Therefore, there are three cases for $k(\lambda,\partial)$. \\
\textbf{Subcase 1:} $k(\lambda,\partial)=0$. \\
If $D(\lambda,\partial)=0$, we have $T(\lambda,\partial)=0$ by \eqref{lfd1}, which is impossible by comparing the coefficient of $\lambda^2$ in \eqref{lfd6}. Therefore, we get $D(\lambda,\partial)=\partial+\gamma\lambda+\delta$.
Taking $D(\lambda,\partial)=\partial+\gamma\lambda+\delta $ into \eqref{lfd1}, we obtain
\begin{eqnarray}
&&(\gamma-1)\lambda^2+(1-\alpha)\gamma\mu^2+(\gamma-1)\lambda\partial+(\delta-\beta)\partial+(3\gamma-\alpha\gamma-2)\lambda\mu\nonumber\\
&&+(\delta-c+c\gamma-\beta\gamma)\lambda+(2\delta-\beta\gamma-\alpha\delta)\mu+(c-\beta)\delta+(1-\alpha)\mu\partial  \nonumber\\
\label{e2.1}&=&(\partial+\lambda+\mu+c)T(\lambda,-\lambda-\mu).
\end{eqnarray}
Then the degree of $\partial$ in $T(\lambda,\partial)$ is smaller than 2 by comparing the degree of $\mu$ in \eqref{e2.1}. Therefore, assume $T(\lambda,\partial)=t_1(\lambda)\partial+t_0(\lambda)$, where $t_0(\lambda)$, $t_1(\lambda)\in \mathbb{C}[\lambda]$. Comparing the coefficient of $\mu^2$ in \eqref{e2.1}, we obtain that the degree of $\lambda$ in $t_1(\lambda)$ is equal to $0$. Set $t_1(\lambda)=t_1$ where $t_1\in \mathbb{C}$. Taking $T(\lambda,\partial)=t_1\partial+t_0(\lambda)$ into \eqref{lfd3}, we get
\begin{align}\label{e2.2}
&(\lambda^2+\lambda\partial+c\lambda-\mu^2-\mu\partial-c\mu)t_1=(\mu\partial+\partial^2+\alpha\lambda\partial+\beta\partial)t_1
+(\mu+\partial+\alpha\lambda+\beta)t_0(-\mu-\partial)\\
&-(\lambda\partial+\partial^2+\alpha\mu\partial+\beta\partial)t_1
-(\lambda+\partial+\alpha\mu+\beta)t_0(-\lambda-\partial).\nonumber
\end{align}

If $t_1\neq0$, then $t_0(\lambda)=t_1\lambda+t$  by comparing the coefficient of $\lambda^2$ in \eqref{e2.2}, where $t_1$, $t\in\mathbb{C}$. By comparing the degree of $\mu$ in \eqref{lfd5}, we get $t_1=0$, which contradicts with our assumption.

If $t_1=0$, by \eqref{e2.2}, we get $T(\lambda,\partial)=t_2\lambda+t_3$ where $t_2,t_3\in\mathbb{C}$. Comparing the coefficients of $\lambda^2$, $\mu^2$, $\partial$ and $\lambda$ and constant term in \eqref{e2.1} and \eqref{lfd5}, we get that $t_2=0$, $\gamma=\alpha=1$, $\beta=\delta-t_3$, $t_3^2=t_3\xi$ and  $\beta(\delta-c)=0$. It follows  that $\beta(t_3-\delta+\xi)=0$ by comparing the constant term in \eqref{lfd6}. Therefore, if $\beta=0$, then $t_3$ is equal to $\delta$ which is equal to $\xi$ or $0$. If $\beta\neq0$, then $\delta=c$, $\beta=\xi$, and $t_3=c-\xi$ which is equal to $\xi$ or $0$.

Therefore, we have the following results in this case.

When $\xi=c=0$, any flag datum is equal to $(\lambda+\partial, 0, \lambda+\partial, 0, \lambda+\partial)$.

When $\xi=c\neq 0$, any flag datum is equal to one of the following forms: $(\lambda+\partial,0,\lambda+\partial+\xi,\xi,\lambda+\partial+\xi) $, $(\lambda+\partial,0,\lambda+\partial,0,\lambda+\partial+\xi )$ or $(\lambda+\partial+\xi,0,\lambda+\partial+\xi,0,\lambda+\partial+\xi)$.

When $\xi=0$ and $c\neq 0$, any flag datum is equal to $(\lambda+\partial,0,\lambda+\partial,0,\lambda+\partial)$.

When $\xi\neq0 $ and $c=0 $,  any flag datum is equal to one of the following forms: $(\lambda+\partial,0,\lambda+\partial+\xi,\xi,\lambda+\partial+\xi)$ or $(\lambda+\partial,0,\lambda+\partial,0,\lambda+\partial+\xi) $.

When $c=2\xi\neq0 $, any flag datum is equal to one of the following forms:
$(\lambda+\partial,0,\lambda+\partial+\xi,\xi,\lambda+\partial+\xi)$, $(\lambda+\partial,0,\lambda+\partial,0,\lambda+\partial+\xi) $ or $(\lambda+\partial+\xi,0,\lambda+\partial+2\xi,\xi,\lambda+\partial+\xi)$.

When $c\neq \xi,2\xi$ and $c,\xi$ are not equal to $0$, any flag datum is equal to one of the following forms: $(\lambda+\partial,0,\lambda+\partial+\xi,\xi,\lambda+\partial+\xi)$ or $(\lambda+\partial,0,\lambda+\partial,0,\lambda+\partial+\xi) $.\\
\textbf{Subcase 2:} $k(\lambda,\partial)=c\neq0$.\\
If $D(\lambda,\partial)=0$, we get $T(\lambda,\partial)=0$ by comparing the degree of $\partial $ in \eqref{lfd5}, which will cause \eqref{lfd6}  invalid. Therefore, $D(\lambda,\partial)= \partial+\gamma\lambda+\delta$, where $\gamma,\delta\in\mathbb{C}$.
It follows that $D(\lambda,\partial)=\partial+\lambda+\delta$ by comparing the coefficient of $\lambda$ in \eqref{lfd8}. Then the degree of $\partial$ in $T(\lambda,\partial)$ is smaller than 2 by comparing the degree of $\lambda$ in \eqref{lfd5}. Assume $T(\lambda,\partial)=t_1(\lambda)\partial+t_0(\lambda)$ where $t_i(\lambda)\in\mathbb{C}[\lambda]$ for $i=1, 2$. Taking it into \eqref{lfd1}, one has
\begin{align}\label{e2.3}
&(\lambda+\mu+\partial+c)(t_1(\lambda)\lambda+t_1(\lambda)\mu-t_0(\lambda))+(c+\lambda+\mu-\alpha\mu-\beta)(\partial+\lambda+\mu+\delta)\\
&=(\lambda+\mu+\partial+c)(\lambda+\partial+\mu)-(\lambda+\mu+\partial+\delta)(\mu+\partial+c)-c(t_1(-\mu-\partial)\partial+t_0(-\mu-\partial)).\nonumber
\end{align}
It follows that the degree of $\lambda$ in $t_1(\lambda)$ is equal to $0$ by comparing the coefficient of $\mu^2$ in \eqref{e2.3}. Then the degree of $\lambda$ in $t_0(\lambda)$ is smaller than $2$ by comparing the degree of $\lambda$ in \eqref{e2.3}. Assume $T(\lambda,\partial)=t_1\partial+t_2\lambda+t_3$, where $t_1,t_2,t_3\in\mathbb{C}$.  Then \eqref{e2.3} becomes
\begin{eqnarray}
\label{eqq1}&&(t_1-t_2)\lambda^2+(t_1+1-\alpha)\mu^2+(2t_1-\alpha+1-t_2)\lambda\mu+(t_1-t_2)\lambda\partial\\
&&+(t_1+1-\alpha)\mu\partial+(ct_1+c+\delta-\beta-t_3-ct_2)\lambda+(ct_1-t_3+2\delta+c-\alpha\delta-\beta-ct_2)\mu\\
&&+(-t_3+\delta+ c-\beta+ct_1-ct_2)\partial+2c\delta-\beta\delta=0.\nonumber
\end{eqnarray}
It follows that $t_2=t_1$. Taking $T(\lambda,\partial)=t_1(\partial+\lambda)+t_3 $ into
 \eqref{lfd5}, one has
\begin{align*}
(t_1\lambda+\xi-t_3)(-t_1\lambda+t_3)=t_1^2\lambda\mu-t_1t_3\mu-t_1\lambda\mu+\delta t_1\mu.
\end{align*}
Therefore, we get $t_1=0$ and $t_3^2=t_3\xi$. By \eqref{eqq1}, we have $\alpha=1$, $c+\delta=t_3+\beta$ and $(2c-\beta)\delta=0$.  Then \eqref{lfd6} becomes
\begin{align}\label{5.26}
(\beta-c)(\lambda+\mu+\partial+\xi)+(t_3-\delta)(\lambda+\mu+\partial+\beta)=-ct_3.
\end{align}
\indent If $\beta=0$, then $t_3=c=\xi\neq 0$ and $\delta=0$.

If $ \beta\neq0$, then the flag datum is equal to $(\lambda+\partial+\beta,c,\lambda+\partial+\delta,c+\delta-\beta,\lambda+\partial+c )$.

Then we consider the relationship between $c $ and $\xi $.

If $c=\xi\neq0$, then there are two cases. If $t_3\neq0$, then $\delta=\beta=2c=2t_3=2\xi$ by comparing the constant  terms  in \eqref{5.26}. If $t_3=0$ and $\delta\neq 0$, we have $\beta=2c=2\delta$. Then by comparing the constant term in \eqref{5.26} we have $c=\xi=\beta$, contradicting with our assumption.
Therefore, if $t_3=0$, then $\delta=0$ and $\beta=c$.

If $c\neq0 $ and $\xi=0$, then $t_3=0$, $\delta=0$ and $\beta=c$ in that by comparing the constant term in \eqref{5.26} we have $\beta\delta=0$ and if $ \delta\neq0$, then $\beta=2c=0$ which contradicts with our assumption.

If $c$ and $\xi$ are not equal and neither of them are $0$, then we claim that $t_3=0$. Indeed, if $t_3\neq0$, then $t_3=\xi$. By comparing the constant term in \eqref{5.26}, we get $\beta(2t_3-\delta)=0$ which means that $\delta=2t_3\neq0$ and $\beta=2c$. However $\beta-c=\delta-t_3$ implies that $c=\xi$, which contradicts with our assumption. It follows from $t_3=0$ that $\beta-c=\delta$. If $\delta\neq0$, then $\beta=\xi=2c=2\delta$.
If $\delta=0$, then $\beta=c$.

Therefore, we have the following results in this case.\par
When $c=\xi\neq0$, any flag datum is equal to one of the following forms: $(\lambda+\partial,c,\lambda+\partial,c,\lambda+\partial+c)$, $(\lambda+\partial+2c,c,\lambda+\partial+2c,c,\lambda+\partial+c)$ and
 $(\lambda+\partial+c,c,\lambda+\partial,0,\lambda+\partial+c)$.\par

When $c\neq0 $ and $\xi=0$, any flag datum is equal to $(\lambda+\partial+c,c,\lambda+\partial,0 ,\lambda+\partial) $.\par

When $\xi=2c\neq0$,  any flag datum is equal to $(\lambda+\partial+c,c,\lambda+\partial,0,\lambda+\partial+\xi )  $ and $(\lambda+\partial+2c,c,\lambda+\partial+c,0,\lambda+\partial+2c )  $.\par

When $\xi\neq c,2c$ and $c,\xi$ are not equal to $0$, any flag datum is equal to $(\lambda+\partial+c,c,\lambda+\partial,0,\lambda+\partial+\xi )  $.\\
\textbf{Subcase 3:}
In the end,  $ k(\lambda,\partial)=-\lambda+c$. By \eqref{lfd8}, we get
$$
(\lambda+\partial+c)D(\mu,\lambda+\partial)=(\mu+\partial+c)D(\lambda,\mu+\partial),
$$
which implies that $D(\lambda,\partial)$ is not equal to $\partial+\gamma\lambda+\delta$. So $D(\lambda,\partial)=0$ and then the degree of $\partial$ in $T(\lambda,\partial)$ is smaller than $2$ by comparing the degree of $\lambda$ in \eqref{lfd5}. Moreover by comparing the degree of $\partial$ in \eqref{lfd5}, we obtain $T(\lambda,\partial)=0$. It follows from \eqref{lfd6} that $ \alpha=1$ and $\xi=\beta=c$. Hence, in this case, any flag datum is equal to $(\lambda+\partial+c,-\lambda+c,0,0,\lambda+\partial+c)$.

Finally, we present our result as follows. It is obvious that the following kinds of flag datums are not equivalent to each other in each of the following cases by Theorem \ref{th4.3}.

If $\xi=c=0$ then $\mathcal{HB}^2(Q,R)$ can be described by three kinds of flag datums: $(0,0,0,0,\lambda+\partial)$,
$(\lambda+\partial, 0, \lambda+\partial, 0, \lambda+\partial)$ and $(\lambda+\partial,-\lambda,0,0,\lambda+\partial)$.\par

If $\xi=c\neq0$ then $\mathcal{HB}^2(Q,R)$ can be described by eight kinds of flag datums: $(0,0,0,0,\lambda+\partial+c)$, $(\lambda+\partial,0,\lambda+\partial+c,c,\lambda+\partial+c) $, $(\lambda+\partial,0,\lambda+\partial,0,\lambda+\partial+c )$, $(\lambda+\partial+c,0,\lambda+\partial+c,0,\lambda+\partial+c)$, $(\lambda+\partial,c,\lambda+\partial,c,\lambda+\partial+c)$, $(\lambda+\partial+2c,c,\lambda+\partial+2c,c,\lambda+\partial+c)$, $(\lambda+\partial+c,c,\lambda+\partial,0,\lambda+\partial+c)$ and $(\lambda+\partial+c,-\lambda+c,0,0,\lambda+\partial+c)$.\par

If $\xi=0$ and  $c\neq0$, then $\mathcal{HB}^2(Q,R)$ can be described by three kinds of flag datums: $(0,0,0,0,\lambda+\partial)$, $(\lambda+\partial,0,\lambda+\partial,0,\lambda+\partial)$ and $(\lambda+\partial+c,c,\lambda+\partial,0 ,\lambda+\partial) $.

If $\xi\neq0$ and  $c=0$, then $\mathcal{HB}^2(Q,R)$ can be described by three kinds of flag datums: $(0,0,0,0,\lambda+\partial+\xi)$, $(\lambda+\partial,0,\lambda+\partial+\xi,\xi,\lambda+\partial+\xi)$ and $(\lambda+\partial,0,\lambda+\partial,0,\lambda+\partial+\xi) $.

If $c=2\xi\neq0$, then $\mathcal{HB}^2(Q,R)$ can be described by five kinds of flag datums: $(0,0,0,0,\lambda+\partial+\xi)$, $(\lambda+\partial,0,\lambda+\partial+\xi,\xi,\lambda+\partial+\xi)$, $(\lambda+\partial,0,\lambda+\partial,0,\lambda+\partial+\xi) $, $(\lambda+\partial+\xi,0,\lambda+\partial+2\xi,\xi,\lambda+\partial+\xi)$ and $(\lambda+\partial+2\xi,2\xi,\lambda+\partial,0,\lambda+\partial+\xi )  $.

If $\xi=2c\neq0$, then $\mathcal{HB}^2(Q,R)$ can be described by five kinds of flag datums: $(0,0,0,0,\lambda+\partial+2c)$, $(\lambda+\partial,0,\lambda+\partial+2c,2c,\lambda+\partial+2c)$, $(\lambda+\partial,0,\lambda+\partial,0,\lambda+\partial+2c) $, $(\lambda+\partial+2c,c,\lambda+\partial+c,0,\lambda+\partial+2c )  $ and $(\lambda+\partial+c,c,\lambda+\partial,0,\lambda+\partial+2c )  $ .

If $\xi\neq2c$, $\xi\neq c$, $c\neq 2\xi$, and $\xi,c$ are not equal to 0, then  $\mathcal{HB}^2(Q,R)$ can be described by four kinds of flag datums: $(0,0,0,0,\lambda+\partial+\xi)$, $(\lambda+\partial,0,\lambda+\partial+\xi,\xi,\lambda+\partial+\xi)$, $(\lambda+\partial,0,\lambda+\partial,0,\lambda+\partial+\xi) $ and $(\lambda+\partial+c,c,\lambda+\partial,0,\lambda+\partial+\xi )  $.

\end{example}

\noindent {\bf Acknowledgments.} This research is supported by
NSFC (12171129) and
the Zhejiang Provincial Natural Science Foundation of China (LY20A010022).

\end{document}